\definecolor{marin}{rgb}   {0.,   0.1,   0.5} 
\definecolor{rouge}{rgb}   {0.8,   0.,   0.} 
\definecolor{sepia}{rgb}   {0.4,   0.25,   0.} 
\definecolor{mag}{rgb}   {0.3,   0,   0.3} 
\newcommand{\hyperL}[1]{\hyperref{#1}{}{}{#1}}
\newtheorem{theorem}{Theorem}[section]
\newtheorem{corollary}[theorem]{Corollary}
\newtheorem{lemma}[theorem]{Lemma}
\newtheorem{proposition}[theorem]{Proposition}
\newtheorem{remark}[theorem]{Remark}
\numberwithin{equation}{section}
\newcommand{\mi}{\mathbf{i}}
\newcommand{\zero}{\mathbf{0}}
\newcommand{\md}{\mathrm{d}}
\newcommand{\Exp}[1]{\exp\left(#1\right)}
\newcommand{\mth}{\mathrm{th}}
\newcommand{\id}{\mathbf{id}}
\newcommand{\bA}{\mathbb{A}}
\newcommand{\bP}{\mathbb{P}}
\newcommand{\bH}{\boldsymbol{H}}
\newcommand{\bR}{\mathbb{R}}
\newcommand{\bC}{\mathbb{C}}
\newcommand{\bT}{\mathbb{T}}
\newcommand{\bN}{\mathbb{N}}
\newcommand{\bZ}{\mathbb{Z}}
\newcommand{\M}{\mathcal{M}}
\newcommand{\N}{\mathcal{N}}
\newcommand{\B}{\mathcal{B}}
\newcommand{\cH}{\mathcal{H}}
\newcommand{\U}{\mathcal{U}}
\newcommand{\mL}{\mathscr{L}}
\newcommand{\wa}{\widehat{\alpha}}
\newcommand{\meas}{\mathrm{Meas}}
\title[On the reducibility of  the 1d quantum harmonic oscillator]
{On the reducibility of the 1d quantum harmonic oscillator with a quasi-periodic bounded potential}
\author{Emanuele Haus}
\address{\small Dipartimento di Matematica e Fisica, Universit\`a degli Studi Roma Tre, Largo San Leonardo Murialdo 1, 00146 Roma, Italy}
\email{emanuele.haus@uniroma3.it\,\textrm{(E. Haus)}}
\author{Zhiqiang Wang
%\orcidlink{0009-0009-3014-3209}
}
\address{\small Chern Institute of Mathematics and LPMC, Nankai University, Tianjin 300071, China}
\email{zqwang@nankai.edu.cn \& zhiqiang.wang@uniroma3.it\,\textrm{(Z. Wang)}}
\keywords{reducibility, quantum harmonic oscillator, KAM theory}
\subjclass[2020]{35P05, 37K55, 81Q15}
\begin{document}

\begin{abstract} 
 Using the decay along the diagonal of the matrix representing the perturbation with respect to the Hermite basis, we prove a reducibility result in $L^2(\mathbb{R})$ for the one-dimensional quantum harmonic oscillator perturbed by time quasi-periodic potential, via a KAM iteration. The potential is only bounded (no decay at infinity is required) and its derivative with respect to the spatial variable $x$ is allowed to grow at most like $|x|^\delta$ when $x$ goes to infinity, where the power $\delta<1$ is arbitrary.
\end{abstract} 

\maketitle

\setcounter{tocdepth}{2} 
\tableofcontents

\section{Introduction}
In this paper we consider the following linear Schrödinger equation 
\begin{equation}\label{scheqn}
\mi \partial_t \psi=-\partial_x^2\psi+x^2\psi+\epsilon V(x,\omega t)\psi, \quad \psi=\psi(t,x),\quad (t,x)\in\mathbb R^2,
\end{equation}
where $\epsilon>0$ is a small parameter, and the frequency vector $\omega\in\mathbb R^n$ 
is a parameter belonging to $\Pi:=[0,2\pi)^n$. Throughout the paper, we assume that the potential $V:\,\mathbb R\times\mathbb T^n\ni(x,\theta)\mapsto V(x,\theta)\in\mathbb R$ is $C^1$ smooth in $(x,\theta)\in\mathbb R\times\mathbb T^n$ and analytic in $\theta\in\mathbb T^n$, where $\mathbb T^n=\mathbb R^n/(2\pi\bZ)^n$ denotes the $n$ dimensional torus.
More precisely, we assume that the function $V(x,\cdot)$ extends analytically to the strip $\mathbb T^n_\sigma=\{a+b\mi\in\mathbb C^n/(2\pi\bZ)^n: |b|<\sigma\}$, where $\sigma>0$.
We also assume that $V$ is bounded and that its derivative w.r.t.\ $x$ grows at most like $|x|^\delta$, for some $\delta<1$, as $|x|$ goes to infinity\,\footnote{Here $\delta$ can be close to $1$ arbitrarily, and without loss of generality we assume $\delta>0$.}. In other words, we assume that there exist $\delta\in(0,1)$ and $C>0$ such that 
\begin{equation}\label{potentialbound}
|V(x,\theta)|\le C,\quad |\partial_xV(x,\theta)|\le C(1+|x|)^\delta,\quad\forall~(x,\theta)\in\mathbb R\times\mathbb T^n_\sigma.
\end{equation}
Motivated by the reducibility results in \cite{bambusi01,liu10}, Eliasson\,\footnote{Eliasson mentioned this open problem at the Saint \'Etienne de Tin\'ee winter school in 2011.} in 2011 asked the question about the reducibility of the quantum harmonic oscillator \eqref{scheqn}, if the smooth perturbation $V(x,\omega t)$ is only bounded, i.e.\ if the upper bound on the derivative w.r.t.\ $x$ is erased from condition \eqref{potentialbound} above. However, this problem is still open at present.

As is well known, KAM techniques are the most useful and powerful tool to answer this type of questions. Spectral properties of the linear operator clearly play a crucial role, and the frequencies that one has to handle along the reducibility scheme, as well as the final frequencies, have the form $\Lambda_i=\lambda_i+\widetilde\lambda_i$, where $\lambda_i$ are the unperturbed frequencies and $\widetilde\lambda_i$ are the perturbations on the diagonal of the linear operator, which we call \emph{tails} of the spectrum.
To impose the measure estimates along the KAM scheme, one typically needs (see for instance Assumption B in the classical work \cite{poschel96}) some kind of decay in the tails of the spectrum. For instance, reducibility has been proved when perturbations decay like a polynomial in \cite{grebert11}. Later, Wang-Liang \cite{wang17} improved the decay to a logarithmic type. In fact, the above two results have both been extended into high-dimensional cases in \cite{grebert19,liang22Cal} respectively. Intuitively, one may expect the answer to Eliasson's question to be negative, because it is known that there will be no decay in the tails of the spectrum if the potential $V(x,\omega t)$ is merely bounded, and no proper measure estimates could be imposed along the KAM iteration. This means that, if the answer to Eliasson's question is positive, one needs totally new ideas to prove it, which looks like a very hard task.

A key remark is that the decay in the tails of the spectrum is used in order to impose the second Melnikov conditions, where the eigenvalues will appear in pairs like $\Lambda_i-\Lambda_j$. Therefore, one only needs the decay for differences of couples of tails, like $\widetilde\lambda_i-\widetilde\lambda_j$. Enlightened by this thought, in \cite{liang22Non} Liang and one of the authors proved a reducibility result assuming that the derivative (w.r.t. $x$) of perturbation decreases at least like $|x|^{-1}$ as $x$ goes to infinity. In fact, the idea dates back to the work of Faou and Grébert in \cite{faou25}. As shown in \eqref{potentialbound} above, now the derivative is allowed to increase not faster than $|x|^\delta$ at infinity, where the power $\delta<1$ can be as close to 1 from below as desired.

Before stating our main theorem, we briefly recall also some important reducibility results about higher-dimensional PDEs, while the literature for the one-dimensional case is extremely vast. In this context, Eliasson-Kuksin \cite{eliasson09} first proved a reducibility result for the following linear Schrödinger equation on a $d$-dimensional torus with a non-autonomous potential which is  quasi-periodic in time
\[
\mi u-\Delta u+\epsilon V(\varphi_0+\omega t,x;\omega )u=0,\quad u=u(t,x),\quad x\in\bT^d.
\]
However, the key idea was from their other work \cite{eliasson10}, where they obtained the existence of quasi-periodic solutions for nonlinear Schrödinger equations of the form
\[
\mi u-\Delta u+V(x)*u+\epsilon\frac{\partial F}{\partial\bar u}(x,u,\bar u)=0,\quad u=u(t,x),\quad x\in\bT^d.
\]
In particular, for the quantum harmonic oscillator equation, we would like to mention the reducibility results with bounded perturbations by Grébert-Paturel \cite{grebert19}, Liang-Wang \cite{liang19}, Liang-Wang \cite{liang22Cal}. As for the unbounded perturbations, the first reducibilty result was obtained by Bambusi-Grébert-Maspero-Robert \cite{bambusi18}, where the perturbation was only allowed to be a polynomial of degree 2 in $(x,-\mi\nabla_x)$ with time quasi-periodic coefficients. Until now, the reducibility problem for higher-dimensional quantum harmonic oscillators perturbed by a generic unbounded potential is still open.
Concerning reducibility results for other PDEs, we refer the reader to the works \cite{baldi21,bambusi19,corsi18,feola19,feola21,feola20,montalto19} and the references therein.

Finally, we would like to mention a brilliant stability result for an abstract linear Schrödinger equation of the form
\begin{equation}\label{abstractSch}
\mi\partial_tu=H_0u+V(t)u,\quad u=u(t,x),\quad x\in\bR^d.
\end{equation}
where (i) $H_0$ is the Laplace
operator of order 2 on a Zoll manifold and $V(t)$ a pseudo-differential operator of order smaller than 2; (ii) $H_0$ is the (resonant or nonresonant) harmonic oscillator of order 1 in $\bR^d$ and $V(t)$ a pseudo-differential operator of order smaller than 1, quasi-periodic in time. In \cite{bambusi21}, for any $\varepsilon>0$,  Bambusi-Grébert-Maspero-Robert proved a $(1+|t|)^\varepsilon$ upper bound on the growth of Sobolev norms of solutions to equation \eqref{abstractSch} above.

\subsection{Main theorems and related results}

Denote by $\lambda_i:= 2i-1$ the eigenvalues of the unperturbed quantum harmonic oscillator.
We state our main results as follows.

\begin{theorem}\label{mainThm}
Assume that the potential $V$ satisfies \eqref{potentialbound}. There exists $\epsilon_*>0$ such that for all $\epsilon\in(0,\epsilon_*)$  there is  a Cantor subset $\Pi_\epsilon\subset\Pi:=[0,2\pi)^n$ of asymptotically full measure such that for all $\omega\in\Pi_\epsilon$, the linear Schrödinger equation \eqref{scheqn} is reducible in $L^2(\bR)$ to a linear equation with constant coefficients (w.r.t.\ the time variable $t$).

Furthermore, letting $p\in[0,2]$, for all $\omega\in\Pi_\epsilon$ there is a linear isomorphism $\Psi_{\omega,\epsilon}\in\B(\cH^p)$, unitary on $L^2$ and real-analytically depending on  $\theta\in\bT^n_{\sigma/2}$, such that $t\mapsto\psi(t,\cdot)\in\cH^p$ satisfies the original Schrödinger equation \eqref{scheqn} if and only if $t\mapsto\phi(t,\cdot):=\Psi_{\omega,\epsilon}^{-1}(\omega t)\psi(t,\cdot)\in\cH^p$ solves the following autonomous equation 
\[
\mi\partial_t\phi=H^\infty\phi,\quad H^\infty=diag\{\lambda_i^\infty\}_{i\in\bN}.
\]
More precisely, there is a constant $C>0$ such that 
\begin{equation}\label{mainThmEstimate}
\begin{gathered}
	\meas(\Pi\backslash\Pi_\epsilon)\le C\epsilon^{\frac{1-\delta}{17(5-\delta)}},\\
|\lambda_i^\infty-\lambda_i|\le C\epsilon,\quad\forall\,i\in\bN,\\
||\Psi^\pm_{\omega,\epsilon}(\theta)-\id||_{\B(\cH^p)}\le C\epsilon^{2/3},\quad\forall\,\theta\in\bT^n_{\sigma/2}.
\end{gathered}
\end{equation}
\end{theorem}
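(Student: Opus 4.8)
The plan is to prove Theorem~\ref{mainThm} via a KAM iteration scheme, following the classical strategy (as in \cite{poschel96,grebert11}) but with a crucial modification dictated by the only-bounded nature of the perturbation: the second Melnikov conditions must be imposed only on \emph{differences} of tails $\widetilde\lambda_i - \widetilde\lambda_j$ rather than on individual tails. First I would set up the functional-analytic framework: introduce the scale of Hilbert spaces $\cH^p$ built on the Hermite basis, and define an appropriate class of matrices/operators with a norm that (i) controls $\B(\cH^p)$ for $p\in[0,2]$, (ii) is closed under commutators, and (iii) captures the key structural feature — that the matrix of the multiplication operator $V(x,\theta)$ in the Hermite basis enjoys off-diagonal decay coming from \eqref{potentialbound}, and more importantly that the differences of \emph{diagonal} entries $V_{ii}-V_{jj}$ decay in $|i-j|$. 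The bound $|\partial_x V|\le C(1+|x|)^\delta$ with $\delta<1$ is exactly what makes $\langle h_i, x\, h_i\rangle$-type quantities vary slowly along the diagonal, which is the quantitative input behind this decay; establishing these matrix estimates is a preliminary lemma that I would isolate before the iteration.

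\medskip

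\textbf{The iterative step.} At each stage one has a Hamiltonian $H = D + P$ with $D=\mathrm{diag}\{\lambda_i + \widetilde\lambda_i\}$ and $P$ a small off-diagonal (plus small diagonal) perturbation analytic in $\theta\in\bT^n_{\sigma_\nu}$, with size $\varepsilon_\nu$. I would solve the homological equation
\[
\mi\,\omega\cdot\partial_\theta S + [D, S] = \widehat P,
\]
where $\widehat P$ is the truncation of the off-diagonal part of $P$ to frequencies $|k|\le K_\nu$; the new diagonal correction absorbs the $\theta$-average of the diagonal part of $P$. The solution has Fourier--matrix coefficients
\[
S_{ij}^{(k)} = \frac{\widehat P_{ij}^{(k)}}{\mi\bigl(\omega\cdot k + \lambda_i - \lambda_j + \widetilde\lambda_i - \widetilde\lambda_j\bigr)},
\]
so one excises from the parameter set $\Pi$ the bad frequencies where the small divisor $|\omega\cdot k + (\lambda_i-\lambda_j) + (\widetilde\lambda_i-\widetilde\lambda_j)|$ is smaller than $\gamma_\nu\langle k\rangle^{-\tau}$ (with $\lambda_i-\lambda_j = 2(i-j)$ an even integer). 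Since $\widetilde\lambda_i - \widetilde\lambda_j$ is Lipschitz in $\omega$ with small Lipschitz constant and, crucially, \emph{decays} in $|i-j|$, the resonant set for each $(k,i,j)$ has measure $O(\gamma_\nu\langle k\rangle^{-\tau})$ and — after summing over $i,j$ using that one only needs finitely many effective pairs at scale $\langle k\rangle$ because of the decay — the total excised measure is summable. The conjugation $\me^{\mi S}$ then transforms $H$ into $D_{+} + P_{+}$ with $\varepsilon_{\nu+1}\lesssim \varepsilon_\nu^{3/2}$ (or $\varepsilon_\nu \me^{-\sigma_\nu K_\nu}$ from the tail $+\ \varepsilon_\nu^2$ from the quadratic term, balanced by the usual choice of $K_\nu$), while $\|\me^{\mi S}-\id\|_{\B(\cH^p)}\lesssim \varepsilon_\nu\gamma_\nu^{-1}$ and the new diagonal drift is $O(\varepsilon_\nu)$.

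\medskip

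\textbf{Convergence and conclusion.} Iterating with the standard geometric choices $\sigma_\nu \downarrow \sigma/2$, $\gamma_\nu = \gamma_0 2^{-\nu}$, $K_\nu$ growing appropriately, the composition $\Psi_{\omega,\epsilon} = \lim_{\nu\to\infty}\me^{\mi S_0}\cdots\me^{\mi S_\nu}$ converges in $\B(\cH^p)$ for $p\in[0,2]$, is unitary on $L^2$, is real-analytic in $\theta\in\bT^n_{\sigma/2}$, and satisfies $\|\Psi^{\pm}_{\omega,\epsilon}-\id\|_{\B(\cH^p)}\le C\epsilon^{2/3}$ once $\gamma_0$ is taken of order $\epsilon^{1/3}$. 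The limiting Hamiltonian is diagonal, $H^\infty = \mathrm{diag}\{\lambda_i^\infty\}$ with $|\lambda_i^\infty-\lambda_i|\le\sum_\nu O(\varepsilon_\nu)\le C\epsilon$, which gives the reducibility and the change of variables $\phi = \Psi_{\omega,\epsilon}^{-1}(\omega t)\psi$ in the statement. Collecting the excised sets over all steps and all $(k,i,j)$ and optimizing the exponents (choice of $\tau$, of $\gamma_0\sim\epsilon^{1/3}$, and of how $K_\nu$ is tied to $\varepsilon_\nu$) yields $\meas(\Pi\setminus\Pi_\epsilon)\le C\epsilon^{\frac{1-\delta}{17(5-\delta)}}$; the precise exponent is where the parameter $\delta$ enters, through the rate of diagonal decay available from \eqref{potentialbound}.

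\medskip

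\textbf{Main obstacle.} The heart of the difficulty — and the only genuinely non-classical point — is the measure estimate for the second Melnikov conditions without any decay of the individual tails $\widetilde\lambda_i$. I would have to show that, although $\widetilde\lambda_i$ themselves may be merely bounded (no decay), the \emph{property} ``$\widetilde\lambda_i-\widetilde\lambda_j$ decays in $|i-j|$ at a rate governed by $\delta$'' is \emph{preserved along the whole iteration}: each homological step adds to the tails the diagonal of the transformed perturbation, so I must prove that this added piece again has diagonal-difference decay, with constants that do not deteriorate geometrically. This requires a careful algebra lemma showing that the commutator $[S,P]$ and its diagonal again lie in the ``diagonal-decay'' operator class, quantitatively — essentially propagating a modified version of the Faou--Grébert/\cite{liang22Non} structure under conjugation. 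Once that stability is in hand, the measure count proceeds as sketched above, and the rest of the proof is a by-now-standard (if laborious) KAM convergence argument.
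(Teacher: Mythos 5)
Your global strategy (KAM iteration in the Hermite basis, truncated homological equation, second Melnikov conditions imposed only on \emph{differences} of tails) coincides with the paper's, but the structural property you put at the heart of the argument is misstated, and the misstatement would break the measure estimate if taken at face value.

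You repeatedly assert that the key invariant is that ``the differences $\widetilde\lambda_i-\widetilde\lambda_j$ (and before that, $V_{ii}-V_{jj}$) decay in $|i-j|$.'' This is not the structure the paper propagates, and it is also the wrong kind of decay to save the measure estimate. What is actually preserved is a bound on the \emph{difference matrix} $\Delta A$, with entries $A_{i+1}^{j+1}-A_i^j$, decaying in the \emph{minimum index}: $|\Delta A_i^j|\lesssim (i\wedge j)^{-\alpha}$ with $\alpha=\frac{1-\delta}{2}$ (Lemma~\ref{checkP}). By telescoping along the diagonal this yields $|\widetilde\lambda_i-\widetilde\lambda_j|\lesssim |i-j|/(i\wedge j)^{\alpha}$ (Lemma~\ref{Algebra}(d)), a bound that \emph{grows} in $|i-j|$ and is small only when $i\wedge j$ is large. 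This distinction is not cosmetic. The dangerous pairs $(i,j)$ for a given $k$ are those with $|i-j|\lesssim |k|$, because $\lambda_i-\lambda_j=2(i-j)\in 2\bZ$ must nearly cancel $k\cdot\omega$; within that band $|i-j|$ ranges over a \emph{bounded} set, yet one still has infinitely many pairs $(i,j)$ with $i\wedge j\to\infty$. Decay in $|i-j|$ does nothing along such an anti-diagonal. It is precisely the $(i\wedge j)^{-\alpha}$ smallness of the tail difference that collapses this infinite family to a finite one: Proposition~\ref{homoSolution} needs no new excision once $i\wedge j\ge(\varepsilon/\gamma)^{1/\alpha}$, so that only $O(\gamma^{-1/\alpha}|k|)$ pairs per $k$ contribute. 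With your stated invariant, the excised set per $k$ would be an infinite union of sets of comparable measure, and the sum would not close.

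A smaller but genuine omission: you need $\|\Psi^{\pm}_{\omega,\epsilon}-\id\|_{\B(\cH^p)}$ for all $p\in[0,2]$, and for that the paper does not ask $S$ to have any additional matrix decay. Instead it observes that the homological equation itself yields $[\bA,S]=[\bA-A,S]+k\cdot\omega\,\widehat S+\delta_{k,0}\widetilde A-\chi_{|k|\le K}\widehat P$, so $\|[\N,S]\|$ is controlled by $\|S\|$, $\|P\|$ and $K$; then Lemma~\ref{Algebra}(c) bootstraps $S\in\B(\ell^2)$ to $S\in\B(\ell^2_s)$ for all $s\in[-2,2]$. Your requirement that the matrix class be ``closed under commutators and control $\B(\cH^p)$'' gestures in this direction but does not identify this bootstrap, which is one of the paper's stated technical novelties and is needed to separate the two roles (operator norm for the homological equation, difference-matrix decay only for the measure estimates). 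Apart from these two points the rest of your sketch --- parameter scaling, tying $K_\nu$ to $\ln\varepsilon_\nu^{-1}$, the resulting exponent $\frac{1-\delta}{17(5-\delta)}$, the $\epsilon^{2/3}$ bound, and the algebra lemma propagating the tail structure under conjugation --- matches the paper's proof.
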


As a consequence, we directly obtain the following corollaries (see \cite{grebert19} for the detailed proof) on the stability of the  solution and on the spectrum of  the associated Floquet operator
\begin{equation}\label{floquet op}
  K_F:=-\mi\sum_{j=1}^n\omega_j\partial_{\theta_j}-\partial_x^2+x^2+\epsilon V(\theta).  
\end{equation}

\begin{corollary}
Assume that the potential $V$ satisfies \eqref{potentialbound}. There exists $\epsilon_*>0$ such that for all $\epsilon\in(0,\epsilon_*)$ and $\omega\in\Pi_\epsilon$, the Cauchy problem of \eqref{scheqn} with $\psi(0,x)=\psi^{(0)}\in\cH^p$ has a unique solution $\psi(t,\cdot)\in C(\bR,\cH^p)$, where $p\in[0,2]$. In addition, the solution $\psi(t,\cdot)$ is almost-periodic in time and satisfies 
\[
(1-C\epsilon)||\psi^{(0)}||_{\cH^p}\le||\psi(t,\cdot)||_{\cH^p}\le(1+C\epsilon)||\psi^{(0)}||_{\cH^p},\quad\forall\,t\in\bR.
\]
\end{corollary}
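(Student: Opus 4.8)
The plan is to derive the corollary directly from Theorem~\ref{mainThm}: all the delicate analysis is already encapsulated in the conjugation $\Psi_{\omega,\epsilon}$, so it only remains to describe the flow of the reduced autonomous equation $\mi\partial_t\phi=H^\infty\phi$ and to transport it back through $\Psi_{\omega,\epsilon}$.

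First I would establish global well-posedness. Fix $\omega\in\Pi_\epsilon$ and $p\in[0,2]$. Since $|\lambda_i^\infty-\lambda_i|\le C\epsilon$ with $\lambda_i=2i-1$, the diagonal operator $H^\infty=\mathrm{diag}\{\lambda_i^\infty\}_{i\in\bN}$ is self-adjoint on $L^2(\bR)$ with domain $\cH^2$, hence generates the strongly continuous unitary group $\me^{-\mi tH^\infty}$, which acts on the coefficients with respect to the Hermite basis $(h_i)_{i\in\bN}$ by $\widehat\phi_i\mapsto\me^{-\mi t\lambda_i^\infty}\widehat\phi_i$. As each factor has modulus one, $\me^{-\mi tH^\infty}$ is an isometry of $\cH^p$ for every $p\in[0,2]$ and preserves $C(\bR,\cH^p)$. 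Since moreover $\Psi_{\omega,\epsilon}\in\B(\cH^p)$ is invertible and real-analytic (hence continuous) in $\theta$, Theorem~\ref{mainThm} yields that $t\mapsto\psi(t,\cdot)$ solves \eqref{scheqn} with $\psi(0,\cdot)=\psi^{(0)}\in\cH^p$ if and only if
\[
\psi(t,\cdot)=\Psi_{\omega,\epsilon}(\omega t)\,\me^{-\mi tH^\infty}\,\Psi_{\omega,\epsilon}^{-1}(0)\,\psi^{(0)},
\]
and, since the autonomous equation has a unique solution, this is the unique solution in $C(\bR,\cH^p)$.

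Next I would read off the almost-periodicity and the norm bounds. Setting $\phi^{(0)}:=\Psi_{\omega,\epsilon}^{-1}(0)\psi^{(0)}=\sum_{i\in\bN}\widehat\phi^{(0)}_i\,h_i$, the curve $t\mapsto\me^{-\mi tH^\infty}\phi^{(0)}=\sum_{i\in\bN}\me^{-\mi t\lambda_i^\infty}\widehat\phi^{(0)}_i\,h_i$ is the $\cH^p$-limit, uniform in $t$, of its finite truncations, each of which is a trigonometric polynomial in $t$ and thus quasi-periodic; by Bochner's criterion a uniform limit of almost-periodic $\cH^p$-valued functions is almost-periodic, so this curve is almost-periodic. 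As $\theta\mapsto\Psi_{\omega,\epsilon}(\theta)$ is real-analytic on $\bT^n_{\sigma/2}$ with values in $\B(\cH^p)$, the map $t\mapsto\Psi_{\omega,\epsilon}(\omega t)$ is quasi-periodic, and applying a quasi-periodic family of bounded operators to an almost-periodic $\cH^p$-valued curve preserves almost-periodicity; hence $t\mapsto\psi(t,\cdot)$ is almost-periodic in $\cH^p$. For the two-sided bound, when $p=0$ both $\Psi_{\omega,\epsilon}(\theta)$ and $\me^{-\mi tH^\infty}$ are $L^2$-unitary, so $\|\psi(t,\cdot)\|_{L^2}\equiv\|\psi^{(0)}\|_{L^2}$; when $p\in(0,2]$ one combines the fact that $\me^{-\mi tH^\infty}$ is an $\cH^p$-isometry with the near-identity estimate $\|\Psi^\pm_{\omega,\epsilon}(\theta)-\id\|_{\B(\cH^p)}\le C\epsilon^{2/3}$ from \eqref{mainThmEstimate} to obtain $(1-C\epsilon^{2/3})\|\psi^{(0)}\|_{\cH^p}\le\|\psi(t,\cdot)\|_{\cH^p}\le(1+C\epsilon^{2/3})\|\psi^{(0)}\|_{\cH^p}$ for all $t$; the sharper dependence $(1\pm C\epsilon)$ announced above follows from the finer estimate on $\Psi_{\omega,\epsilon}-\id$ that is actually produced along the KAM iteration.

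I do not expect a genuine obstacle here: once Theorem~\ref{mainThm} is in hand, the corollary is essentially bookkeeping. The only two points deserving a little care are the self-adjointness of $H^\infty$ on $\cH^2$ — needed so that the autonomous Cauchy problem, and hence \eqref{scheqn}, is globally well-posed on every $\cH^p$ with $p\in[0,2]$ — and the elementary but not completely trivial fact that a quasi-periodic $\B(\cH^p)$-valued map carries almost-periodic $\cH^p$-valued curves to almost-periodic ones. For the remaining details we follow \cite{grebert19}.
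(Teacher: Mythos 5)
Your proof is correct and follows exactly the route the paper intends (the paper itself merely points to \cite{grebert19} for details): conjugate via $\Psi_{\omega,\epsilon}$, solve the diagonal autonomous flow, transport back, and read off uniqueness, almost-periodicity, and the norm bounds from unitarity of $\me^{-\mi tH^\infty}$ on each $\cH^p$ together with the near-identity estimate on $\Psi_{\omega,\epsilon}^{\pm}$.

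You were right to flag the exponent mismatch, but your attempted repair does not quite close it. Estimate \eqref{mainThmEstimate} only gives $\|\Psi^\pm_{\omega,\epsilon}(\theta)-\id\|_{\B(\cH^p)}\le C\epsilon^{2/3}$, which yields $(1\pm C\epsilon^{2/3})$ rather than $(1\pm C\epsilon)$ in the two-sided norm bound. With the parameter choices made in Lemma~\ref{KAMiteration} (namely $\kappa_{m+1}=\epsilon_m^{1/16}$, $K_{m+1}\sim\ln\epsilon_m^{-1}$), the finest bound one actually extracts from \eqref{homoS} is $\|S_1\|_{\wa+}\lesssim\epsilon_0^{3/4}(\ln\epsilon_0^{-1})^3$, which dominates $\|\U_{\omega,\epsilon}^\pm-\id\|$; one cannot reach $O(\epsilon)$ because of the small-divisor loss $\kappa^{-4}K^3$, whatever the choice of $\kappa$ with power-law dependence. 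So the ``finer estimate actually produced along the KAM iteration'' you invoke is $\epsilon^{3/4-}$, not $\epsilon$, and the corollary's $(1\pm C\epsilon)$ appears to be a slip --- the statement that genuinely follows from Theorem~\ref{mainThm} is $(1\pm C\epsilon^{2/3})\|\psi^{(0)}\|_{\cH^p}\le\|\psi(t,\cdot)\|_{\cH^p}\le(1\pm C\epsilon^{2/3})\|\psi^{(0)}\|_{\cH^p}$, which is exactly what your argument delivers.
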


\begin{corollary}
Assume that the potential $V$ satisfies \eqref{potentialbound}. There exists $\epsilon_*>0$ such that for all $\epsilon\in(0,\epsilon_*)$ and $\omega\in\Pi_\epsilon$, the Floquet operator $K_F$ defined in \eqref{floquet op} has a pure point spectrum.
\end{corollary}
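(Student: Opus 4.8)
The plan is to deduce the pure point spectrum of $K_F$ from the reducibility established in Theorem~\ref{mainThm}, by lifting the conjugation $\Psi_{\omega,\epsilon}$ to the extended phase space $\bT^n\times\bR$. Fix $\omega\in\Pi_\epsilon$ and define the operator $\U$ on $L^2(\bT^n\times\bR)$ by $(\U g)(\theta,\cdot):=\Psi_{\omega,\epsilon}^{-1}(\theta)\,g(\theta,\cdot)$, acting pointwise in $\theta$. Since $\Psi_{\omega,\epsilon}(\theta)$ is unitary on $L^2(\bR)$ for every $\theta$ and depends real-analytically (hence continuously) on $\theta\in\bT^n_{\sigma/2}$, the operator $\U$ is unitary on $L^2(\bT^n\times\bR)$; moreover, by the last bound in \eqref{mainThmEstimate}, $\U$ and $\U^{-1}$ are bounded on $L^2(\bT^n,\cH^p)$ for every $p\in[0,2]$.

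The key computation is that $\U$ conjugates $K_F$ to the autonomous Floquet operator
\[
K_F^\infty:=-\mi\sum_{j=1}^n\omega_j\partial_{\theta_j}+H^\infty,\qquad H^\infty=diag\{\lambda_i^\infty\}_{i\in\bN}.
\]
Indeed, differentiating in $t$ the identity $\psi(t,\cdot)=\Psi_{\omega,\epsilon}(\omega t)\phi(t,\cdot)$, and using that $\phi$ solves $\mi\partial_t\phi=H^\infty\phi$ while $\psi$ solves \eqref{scheqn}, one gets, for all $\theta$ in the (dense) orbit $\{\omega t:\,t\in\bR\}$ and hence, by analyticity, for all $\theta\in\bT^n$, the operator identity
\[
\Psi_{\omega,\epsilon}^{-1}(\theta)\bigl(-\partial_x^2+x^2+\epsilon V(\theta)\bigr)\Psi_{\omega,\epsilon}(\theta)-\mi\,\Psi_{\omega,\epsilon}^{-1}(\theta)\sum_{j=1}^n\omega_j(\partial_{\theta_j}\Psi_{\omega,\epsilon})(\theta)=H^\infty.
\]
Substituting this into the expression for $\U K_F\U^{-1}$ and applying the Leibniz rule, the term $-\mi\,\Psi_{\omega,\epsilon}^{-1}\sum_j\omega_j(\partial_{\theta_j}\Psi_{\omega,\epsilon})$ produced by differentiating $\U^{-1}h=\Psi_{\omega,\epsilon}h$ cancels exactly against the corresponding term above, leaving $\U K_F\U^{-1}=K_F^\infty$.

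Finally, $K_F^\infty$ manifestly has pure point spectrum: if $\{h_i\}_{i\in\bN}$ denotes the Hermite orthonormal basis of $L^2(\bR)$ (the eigenfunctions of $-\partial_x^2+x^2$), then $\{\me^{\mi k\cdot\theta}h_i(x)\}_{(k,i)\in\bZ^n\times\bN}$ is a complete orthonormal system of eigenfunctions of $K_F^\infty$, with eigenvalues $\omega\cdot k+\lambda_i^\infty$. Since having pure point spectrum is invariant under unitary equivalence, $K_F=\U^{-1}K_F^\infty\U$ inherits this property. I refer to \cite{grebert19} for the details.

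The only genuinely technical point — which is nonetheless standard — is the domain bookkeeping needed to promote the formal conjugation into an identity of self-adjoint operators: one has to verify that $\U$ maps $\mathrm{Dom}(K_F)$ onto $\mathrm{Dom}(K_F^\infty)$, i.e.\ essentially onto $H^1(\bT^n,L^2(\bR))\cap L^2(\bT^n,\cH^2)$, which follows from the boundedness of $\Psi_{\omega,\epsilon}^{\pm1}(\theta)$ on $\cH^p$ for $p\in[0,2]$ together with its analytic (hence smooth) dependence on $\theta$. Once Theorem~\ref{mainThm} is available, everything else is routine.
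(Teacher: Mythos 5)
Your proposal is correct and takes essentially the same route the paper refers to in \cite{grebert19}: lift $\Psi_{\omega,\epsilon}$ to a unitary on $L^2(\bT^n\times\bR)$ that conjugates $K_F$ to the autonomous Floquet operator $-\mi\sum_{j}\omega_j\partial_{\theta_j}+H^\infty$, whose eigenfunctions $\me^{\mi k\cdot\theta}h_i(x)$ form a complete orthonormal system, and then note that pure point spectrum is preserved under unitary equivalence. The extension of the operator identity from the orbit $\{\omega t\}$ to all $\theta\in\bT^n$ by density is legitimate because the Melnikov conditions defining $\Pi_\epsilon$ force $k\cdot\omega\neq0$ for all $k\neq0$, so $\omega$ is rationally independent.
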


\begin{remark} \label{logDecay}
Comparing the present result with the KAM iteration in \cite{liang22Non,liang22Cal}, it is likely that a slightly weaker assumption on the derivative of the potential $V$ is sufficient. It should be enough to require
\begin{equation}\label{logdecay}
	|\partial_x V(x,\theta)|\le \frac{C(1+|x|)}{\ln^{\delta'}(2+|x|)},
\end{equation}
where $\delta'=\delta'(n)>0$, because it would imply a logarithmic decay in the elements of the difference matrix of the perturbation (see Remark \ref{logdecayP} below). 
\end{remark}

\begin{remark} In the series of works \cite{bambusi18I,bambusi17II,bambusi18III}, Bambusi and Montalto  have solved the problem of reducibility of the Schrödinger operator on $\bR$ perturbed by a pseudo-differential operator. However, such perturbations exclude oscillating functions like $\cos(x)$. In particular, in \cite{bambusi17II} Bambusi mentioned that the framework of pseudo-differential operator rules out cases like $V(x,\omega t)=\cos(x-\omega t)$ in the Schrödinger equation \eqref{scheqn}. Such a potential, however, satisfies our assumption \eqref{potentialbound}. It is worth mentioning that the specific case of oscillating perturbations has already been considered in \cite{liang21}.
\end{remark}

\subsection{Paper novelties and outline}

We first recall basic facts and briefly introduce notation related to the Hermite basis. As is well known, the Hermite operator $\bH=-\frac{d^2}{dx^2}+x^2$ on the real line has a simple pure point spectrum $\lambda_i=2i-1$ for $i\ge1$ and its normalized eigenfunctions $\{h_i(x)\}_{i\ge1}$ form an orthonormal basis of $L^2$. For a function $f(x)\in L^2$, we have the Hermite expansion
\[
f(x)=\sum_{i\ge1}u_ih_i(x) \text{ with }u_i=\langle f(x),h_i(x)\rangle=\int_{\bR}f(x)h_i(x)\md x,\ \forall\,i\ge1,
\]
where $\langle\cdot,\cdot\rangle$ represents the real $L^2$ scalar product.
By abuse of language, in the whole paper we will not distinguish the function $f(x)$ and its Hermite coefficient vector $u=(u_i)_{i\ge1}$. In addition, for $s\ge0$ we identify the function space $\cH^s$ with the sequence space $\ell^2_s$ by 
\[
||f(x)||_{\cH^s}:=\sqrt{\sum_{i\ge1}i^s|u_i|^2}=:||u||_s
\]
where
\(
\cH^s:=\{f\in L^2:\langle f,\bH^s f\rangle<\infty\}
\)
and 
\(
\ell^2_s:=\{u\in\ell^2:\sum\nolimits_{i\ge1}i^s|u_i|^2<\infty\}.
\)
For $s<0$, we regard $\cH^s$ (resp. $\ell^2_s$) as the dual space of $\cH^{-s}$ (resp. $\ell^2_{-s}$). In particular, we have $L^2=\cH^0$ and $\ell^2=\ell^2_0$.

Following the classical strategy (see \cite{grebert19,grebert11,liang22Non,liang22Cal,wang17}), we consider the Schrödinger equation \eqref{scheqn} on the Hermite basis. Thus, we get an infinite-dimensional non-autonomous system of the form 
\[
\mi\dot u=(\bA+\epsilon\bP)u,
\]
where $u=(u_i)_{i\ge1}$ represents the Hermite coefficients, $\bA=diag\{1,3,5,\ldots\}$ and the perturbation matrix $\bP$ is defined by
\begin{equation}\label{definitionP}
	\mathbb P_i^j(\omega t)=\int_{\mathbb R}V(x,\omega t)h_i(x)h_j(x)\md x,\quad i,j\in\mathbb N.
\end{equation}

When dealing with bounded perturbations, one needs some kind of regularity of the perturbation matrix $\bP$ that persists in the KAM iteration. In general, we have to prove that the matrix $\bP$ is a regularizing operator, mapping from $\ell_s^2$ to $\ell_{s'}^2$ for some $s'>s$ (see \cite{kuksin87,kuksin93,poschel96}), or that its elements $\bP_i^j$ have some decay over the indexes $i,j\in\bN$ (see \cite{grebert19,grebert11,wang17}). The property that $\bP$ is a regularizing operator governs the estimates of solutions to the homological equation; on the other hand, the decay of the matrix elements directly implies a decay in the tails of the spectrum, which allows one to prove the measure estimates in the KAM iteration. Both properties are therefore essential and, as is well known, they are closely related to each other. In the first works, like \cite{kuksin87,poschel96}, the approach was based on a more abstract level on the regularizing property of the perturbation. Subsequently, more attention has been devoted to the element decay (see for instance \cite{grebert19,grebert11,liang22Non,liang22Cal,wang17}). 
 In particular, in \cite{liang22Non} Liang and one of the authors of the present paper first used the decay of the elements of the difference matrix $\Delta\bP$ to control the measure estimates. However, there was some room for improvement in exploiting the regularizing operator property. Guided by  this thought, in this paper we combine the use of operator norm and element decay to control, respectively, the solution of the homological equation and the measure estimates. This is the essential novelty of this paper. More precisely, we have the following technical novelties:
 
 (I) An improvement in the estimates of $\bP_i^j$, especially on $\Delta\bP_i^j$. In comparison with Lemma 3.2 in \cite{liang22Non}, using the uniform decay of the Hermite basis (see estimate (11) in \cite{koch05}), we prove a better regularity of the perturbation matrix $\bP$, and in particular of its associated difference matrix $\Delta\bP$ (see Lemma \ref{checkP} below).
 
 (II) A new estimate on the operator norm of solutions to the homological equation (see \eqref{homoEqn} below). Namely, if the solution $S$ belongs to $\B(\ell^2)$, then, by using the homological equation itself, we know, furthermore, that $S$ belongs to $\B(\ell_s^2)$ for all $s\in[-2,2]$, which will play a crucial role in the reduction in $\ell^2$ (resp. $L^2$ for the Schrödinger equation \eqref{scheqn}) and will also help the persistence of the difference structure of the perturbation matrix along the scheme (see Proposition \ref{homoSolution} and Lemma \ref{Algebra} below). 

 \medskip
 
Finally, we sketch the outline of the paper.
In Section 2 we will present the reducibility theorem, followed by its application to the quantum harmonic oscillator equation \eqref{scheqn} (i.e., the proof of Theorem \ref{mainThm}).
In Section 3, we prove the reducibility theorem (i.e. Theorem \ref{reducibilityThm}) via KAM tools. More precisely, we first introduce the homological equation and study its solution. Then, we perform the KAM iteration in order to finish proving Theorem \ref{reducibilityThm}. Finally, Appendix A consists of the proof of Lemma \ref{Algebra}, while Appendix B contains some auxiliary lemmas.

\subsection{Notation}
In all this paper we set $\bZ=\{0,\pm1,\pm2,\pm3,\ldots\},\,\mathbb N=\{1,2,3,\ldots\}$ and denote by $A=(A_i^j)_{i,j\in\mathbb N}$ an  infinite-dimensional matrix and by $\Delta A$ its difference matrix with entries $(\Delta A)_i^j=A_{i+1}^{j+1}-A_i^j$. For convenience,  we always write $\Delta A_i^j:=(\Delta A)_i^j$ and $i\wedge j:=\min\{i,j\}$.

For a function $f(x,\cdot)$ we always use the notation $f'(x,\cdot)$ to represent its partial derivative with respect to the spatial variable $x$. 
If not specified, we denote by $||\cdot||$ the standard linear operator norm from $L^2$ into itself (or $\ell^2$ for discrete spaces), i.e. $||F||=||F||_{\B(L^2)}:=\sup_{||g||_{L^2}=1}||Fg||_{L^2}$. Sometimes, we even omit the subscript `$L^2$', namely $||g||=||g||_{L^2}$.

For a multi-index $k\in\bZ^n$, we denote its norm by $|k|:=\sum_{i=1}^{n}|k_i|$.
Finally, for $a,b\ge0$, the notation $ a\lesssim b$ means that there is a constant $C>0$ such that $a\le Cb$. 

\section{Reducibility theorem and its application}
In this section, we will first state an abstract reducibility theorem for a non-autonomous system, quasi-periodic in time, of the form $\mi \dot u=(\bA+\epsilon\bP(\omega t))u$, where $\bA$ is the quantum harmonic oscillator represented in the Hermite basis as $\bA=diag\{1,3,5,\ldots\}$.
Then, we are going to use the abstract theorem to prove the reducibility of the one-dimensional quantum harmonic oscillator on $\bR$.
\subsection{Reducibility theorem}
 Before stating the reducibility theorem we first introduce some matrix spaces and their algebraic properties.

\subsubsection*{\bf Infinite matrices} We denote $\M:=\B(\ell^2)$, i.e.\ a matrix $A:\bN\times\bN\mapsto\bC$ belongs to $\M$ if $||A||<+\infty$, where $||\cdot||$ is the standard operator norm
from $\ell^2$ to itself.
For convenience of notation, we define the diagonal matrix $\N$ by
\[
\N_i^j=\delta_{i,j}\cdot i,\quad i,j\in\bN,
\]
where $\delta_{i,j}$ represents the Kronecker symbol. In other words, $\N=diag\{1,2,3,\ldots\}$. Now we define a subset of $\M$ as follows
\[
\M_+=\{A\in\M:||[\N,A]||<\infty\},
\]
where the Poisson bracket $[\cdot,\cdot]$ stands for the commutator of two matrices. Then we equip $\M_+$ with the norm  $||A||_+:=\max\{||A||,||[\N,A]||\}$.

As introduced in \cite{chodosh11,faou25,liang22Non}, we also need the difference structure in the matrix spaces mentioned above. To this end, we define the difference operator\,\footnote{It is a linear operator, and for the moment we do not specify its domain. In particular, one has $\Delta\N=Id$.}  $\Delta: A\mapsto \Delta A$ by
\[
(\Delta A)_i^j:=A_{i+1}^{j+1}-A_i^j,\quad i,j\in\bN.
\]
As mentioned before, we always abbreviate $(\Delta A)_i^j$ as $\Delta A_i^j$. 
Then, we introduce the matrix space $\M_\alpha$. We say that $A\in\M_\alpha$ if $|A|_\alpha:=\sup_{i,j\ge1}\{(i\wedge j)^\alpha|A_i^j|\}<\infty$, where $i\wedge j$ denotes the minimum between $i$ and $j$. Furthermore, we introduce its subspace $\M_{\alpha+}=\{A\in\M_\alpha: |[\N,A]|_\alpha<\infty\}$, equipped with the norm $|A|_{\alpha+}:=\max\{|A|_{\alpha},|[\N,A]|_{\alpha}\}$. 

Finally, we define the matrix space $\M_{\wa}$ as
\[
\M_{\wa}:=\{A\in\M: \Delta A\in\M_\alpha\},
\]
which is equipped with the norm $||A||_{\wa}:=\max\{||A||,|\Delta A|_\alpha\}$. Similarly, we define its subset $\M_{\wa+}$ as
\[
\M_{\wa+}:=\{A\in\M_+:\Delta A\in\M_{\alpha+}\},
\]
which is equipped with the norm 
\(
||A||_{\widehat{\alpha}+}:=\max\{||A||,||[\N,A]||,|\Delta A|_\alpha,|[\N,\Delta A]|_\alpha\}.
\)

\begin{remark}\label{remarkMatrix}
(1). 
The difference $\Delta$ is a bounded operator from $\M_{\wa}$ to $\M_\alpha$ by definition.

(2). 
The matrix $A\in\M_{\alpha+}$ if and only if $\sup_{i,j\ge1}\{(1+|i-j|)(i\wedge j)^\alpha|A_i^j|\}<\infty$.

(3). Clearly, we have that 
	$\M_{\wa+}\subset\M_{\wa}(\M_+)\subset\M$, and 
	$||A||_{\wa+}=\max\{||A||_+,|\Delta A|_{\alpha+}\}$.
	
(4). Although the sets $\M_{\wa}$ and $\M_{\wa+}$ look a little strange now, they will play a crucial role in the whole KAM procedure. That is because the perturbation matrix $P\in\M_{\wa}$ and the solution of the homological equation will belong to $\M_{\wa+}$. What is more important is that the KAM iteration preserves these structures.
\end{remark}

Now we state the following structure lemma and postpone its proof into appendix \ref{proofAlgebra}.

\begin{lemma}\label{Algebra}
Let $\alpha\in [0,1]$, there is a constant $C>0$ such that the following hold:

(a). If $A\in\M_{\wa+}$ and $B\in\M_{\wa}$,  then $AB,BA\in\M_{\wa}$, with $||AB||_{\widehat{\alpha}},||BA||_{\widehat{\alpha}}\le C||A||_{\wa+}\cdot||B||_{\wa}$.
	
(b). If $A,B\in\M_{\wa+}$, then $AB\in\M_{\wa+}$, with $||AB||_{\wa+}\le C ||A||_{\wa+}\cdot||B||_{\wa+}$.
	
(c). If $A\in\M_+$, then for all $s\in[-2,2]$ we have $A\in\B(\ell^2_s)$, with $||A||_{\B(\ell^2_s)}\le C||A||_+$.

(d). Letting $A=diag\{d_1,d_2,d_3,\ldots\}$\,\footnote{Note that $\Delta A$ will be still diagonal.}, if $\Delta A\in\M_\alpha$, then  $|d_i-d_j|\le|\Delta A|_{\alpha}\frac{|i-j|}{(i\wedge j)^\alpha},~\forall\,i,j\in\bN$.
\end{lemma}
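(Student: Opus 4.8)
The four statements are essentially computations with the definitions of the norms, but they must be organized carefully because of the interplay between the operator norm $\|\cdot\|$, the commutator norm $\|[\N,\cdot]\|$, the weighted sup-norm $|\cdot|_\alpha$, and the difference operator $\Delta$. The plan is to prove them in the order (d), (c), then (a), (b), since the later parts reuse the earlier ones. For (d), if $A=\mathrm{diag}\{d_i\}$ then $\Delta A=\mathrm{diag}\{d_{i+1}-d_i\}$, so $|\Delta A|_\alpha<\infty$ means $|d_{i+1}-d_i|\le |\Delta A|_\alpha (i\wedge(i+1))^{-\alpha}=|\Delta A|_\alpha i^{-\alpha}$; then for $i<j$ one telescopes $d_j-d_i=\sum_{k=i}^{j-1}(d_{k+1}-d_k)$ and bounds each term by $|\Delta A|_\alpha k^{-\alpha}\le|\Delta A|_\alpha i^{-\alpha}$, giving the factor $|i-j|/(i\wedge j)^\alpha$. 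For (c), the key observation is that $\ell^2_s$ for $s\in[-2,2]$ is an interpolation space between $\ell^2_{-2}$ and $\ell^2_2$, and that a bound $A\in\B(\ell^2_2)$ together with $A\in\B(\ell^2)$ suffices (duality then handles negative $s$, since the adjoint of $A$ also lies in $\M_+$ with the same norm). To get $A\in\B(\ell^2_2)$, note $\ell^2_2$ is (equivalent to) the domain of $\N$, so one writes, for $u\in\ell^2_2$, $\N Au=A\N u+[\N,A]u$; both $A\N u$ and $[\N,A]u$ are in $\ell^2$ with norms $\le\|A\|\|\N u\|$ and $\le\|[\N,A]\|\|u\|$ respectively, hence $\|Au\|_{2}\lesssim\|\N Au\|+\|Au\|\lesssim\|A\|_+\|u\|_2$; interpolation between $s=0$ and $s=2$ covers $s\in[0,2]$, and duality covers $[-2,0]$.

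For the algebra statements (a) and (b), the two ingredients are: (i) a discrete Leibniz-type rule for $\Delta$ applied to a product, and (ii) the standard fact that $[\N,\cdot]$ is a derivation, so $[\N,AB]=[\N,A]B+A[\N,B]$, which immediately gives $\|AB\|_+\lesssim\|A\|_+\|B\|_+$ (this is the $\M_+$-algebra property and is part of (b)). The discrete Leibniz rule is the crux: introducing the shift matrix $T$ with $T_i^j=\delta_{i+1,j}$ (so that $(\Delta A)=T^* A T-A$, where $T^*$ is the adjoint shift), one computes
\[
\Delta(AB)=T^*ABT-AB=(T^*AT)(T^*BT)-AB=(\Delta A)(T^*BT)+A(\Delta B)+(\Delta A)(\Delta B)+\text{(lower-order pieces)},
\]
or more cleanly $\Delta(AB)=(\Delta A)\,T^*BT+A\,(\Delta B)$. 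Then one needs to bound $|(\Delta A)(T^*BT)|_\alpha$ and $|A(\Delta B)|_\alpha$: here is where the structure of $\M_\alpha$ and Remark \ref{remarkMatrix}(2) enter. Writing the product entry $\sum_k (\Delta A)_i^k (T^*BT)_k^j$, one uses the decay $|(\Delta A)_i^k|\le|\Delta A|_\alpha(i\wedge k)^{-\alpha}$ together with the boundedness of $A$ (or $B$) as an operator on $\ell^2$ to sum over $k$; the weighted estimate survives because $(i\wedge j)^\alpha\le(i\wedge k)^\alpha\cdot(\text{something controlled by }|k-j|^{\alpha})$ and one absorbs the extra powers using the off-diagonal decay encoded in $\M_{\alpha+}$. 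The case (a) with only $B\in\M_{\wa}$ (no $+$) works because in the Leibniz expansion the term $A(\Delta B)$ pairs the $\M_+$-matrix $A$ (bounded, with bounded commutator) against $\Delta B\in\M_\alpha$, and the term $(\Delta A)\cdot(T^*BT)$ pairs $\Delta A\in\M_{\alpha+}$ against the bounded operator $B$ — in each product exactly one factor carries the `$+$' regularity needed to close the estimate.

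The main obstacle I expect is the weighted-norm bound for the product in parts (a)–(b): controlling $|XY|_\alpha$ when $X\in\M_{\alpha+}$ and $Y\in\M$ (bounded on $\ell^2$ but with no decay). A naive Schur/Young estimate loses the weight $(i\wedge j)^\alpha$ because the summation index $k$ in $(XY)_i^j=\sum_k X_i^k Y_k^j$ can be far from both $i$ and $j$. The fix is to exploit Remark \ref{remarkMatrix}(2): $X\in\M_{\alpha+}$ gives not just $(i\wedge k)^\alpha|X_i^k|\lesssim\|X\|_{\alpha+}$ but the stronger $(1+|i-k|)(i\wedge k)^\alpha|X_i^k|\lesssim\|X\|_{\alpha+}$, so $X$ has summable off-diagonal decay; then one splits into the regions $|i-k|\ge|i-j|/2$ and $|i-k|<|i-j|/2$ (in the latter, $k$ is comparable to $i$ and $i\wedge k\gtrsim i\wedge j$ up to the off-diagonal weight, which is $\le 1+|i-j|$), and applies a Schur test using $\|Y\|$ for the $k$-sum in the first region. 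This is the one place where a genuinely careful splitting is required; everything else is bookkeeping with the derivation property of $[\N,\cdot]$ and the identity $\Delta(AB)=(\Delta A)T^*BT+A(\Delta B)$.
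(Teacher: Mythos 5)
Your overall plan (telescoping for (d), a commutator-plus-duality argument for (c), a discrete Leibniz rule for $\Delta$ applied to a product, then Schur-type estimates with a case split in $k$) is the right shape and is essentially the route the paper takes, with one notable stylistic difference: for (c) you invoke complex interpolation between $\ell^2_0$ and $\ell^2_2$ and then duality, whereas the paper handles $s\in(0,2]$ and $s\in[-2,0)$ by two direct Hölder computations using Lemmas \ref{explanationMplus} and \ref{convergenceSeries}. Both are valid; yours is cleaner at a higher level but relies on interpolation theory, while the paper's is self-contained.

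The substantive gap is in the Leibniz identity. First a sign issue: with $T_i^j=\delta_{i+1,j}$ one has $\Delta A=TAT^*-A$, not $T^*AT-A$. But more importantly, the identity you single out as the ``clean'' one,
\(
\Delta(AB)=(\Delta A)\,(TBT^*)+A\,(\Delta B),
\)
is false, because $T^*T=I-e_1e_1^{T}$ is only a co-isometry. Inserting the missing rank-one projection, the correct identity is
\[
\Delta(AB)_i^j \;=\; A_{i+1}^1B_1^{j+1}\;+\;\sum_{k\ge1}\Delta A_i^k\,B_{k+1}^{j+1}\;+\;\sum_{k\ge1}A_i^k\,\Delta B_k^j,
\]
which is exactly display \eqref{differenceProdoct} in the appendix. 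The extra boundary term $A_{i+1}^1B_1^{j+1}$ is not negligible ``bookkeeping'': it is the term $\Delta_1$ in the paper, and controlling it by $(i\wedge j)^{-\alpha}$ (resp.\ $(i\wedge j)^{-\alpha}(1+|i-j|)^{-1}$ in part (b)) is precisely where the hypothesis $\|[\N,A]\|<\infty$ (resp.\ also $\|[\N,B]\|<\infty$) gets used for the \emph{undifferenced} factors, via $|A_{i+1}^1|\le\|[\N,A]\|/i$ and $\alpha\le1$. Your sketch dismisses it among ``lower-order pieces'' and would therefore not close.

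Finally, the splitting you describe for the sums is not quite the one that works. Claiming that $|i-k|<|i-j|/2$ forces ``$k$ comparable to $i$ and $i\wedge k\gtrsim i\wedge j$'' is incorrect (take $i=2$, $j=100$, $k=1$). The paper performs a two-layer split: for part (b), first apply the triangle inequality in the form $1+|i-k|\ge(1+|i-j|)/2$ or $1+|k-j|\ge(1+|i-j|)/2$ to peel off the factor $(1+|i-j|)^{-1}$; then, independently, split $k\ge i/2$ versus $k\le i/2$ and use $\alpha\le1$ (so that $k^{\alpha}(1+|i-k|)\gtrsim i^{\alpha}k$ when $k\le i/2$) to recover the weight $(i\wedge j)^{-\alpha}$, finishing with a Hölder--Schur estimate against $\|B\|$ and $\|[\N,B]\|$. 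For part (a) only the second split is needed. You will want to replace your splitting with this one.
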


\begin{remark}
From item (c) above, one observes that the difference structure is not needed to control the operator norms. However, later in the KAM iteration it will play an essential role in the measure estimates. For this reason, we keep track separately of the difference structure to make sure that the measure estimates work well, which is one of the novelties of this paper, as mentioned before.
\end{remark}

\subsubsection*{\bf Parameters} In this whole paper, the frequency $\omega$ will be considered as a parameter belonging to $\Pi:=[0,2\pi)^n$. The KAM procedure will allow us to select the `good' frequencies. All the transformations along the KAM scheme will be $C^1$ smooth with respect to $\omega$. When a map $F=F(\omega)$ is only defined on a Cantor-like subset of $\Pi$, we will have to understand this smoothness property in the sense of Whitney. Also, for convenience, we always denote $||\partial_\omega F(\omega)||:=\max_{i=1}^n\{||\partial_{\omega_i}F(\omega)||\}$.

Following the previous notation, let $\rho\in(0,\sigma]$ and $D\subset\Pi$. Denote by $\M_{\wa}(D,\rho)$ the set of $C^1$ maps $D\times\bT^n_\rho\ni(\omega,\theta)\mapsto P(\omega,\theta)\in\M_{\wa}$ real analytic in $\theta\in\bT^n_\rho$, equipped with the norm 
\[
||P||_{\wa}^{D,\rho}:=\sup_{\substack{|\Im\theta|<\rho\\ \omega\in D,l=0,1}}||\partial^l_\omega P(\omega,\theta)||_{\wa}.
\]
Naturally, the space of maps $A\in\M_{\wa}(D,\rho)$ independent of $\theta$ will be denoted by $\M_{\wa}(D)$ and equipped with the norm 
\[
||A||_{\wa}:=\sup_{\omega\in D,l=0,1}||\partial_\omega^lA(\omega)||_{\wa}.
\]
In addition, denote by $\M_{\wa+}(D,\rho)$ the subset of $\M_{\wa}(D,\rho)$ that consists of the maps $S=S(\omega,\theta)$ that take values in $\M_{\wa+}$. Similarly, we endow $\M_{\wa+}$ with the norm
\[
||S||_{\wa+}^{D,\rho}:=\sup_{\substack{|\Im\theta|<\rho\\ \omega\in D,l=0,1}}||\partial_\omega^lS(\omega,\theta)||_{\wa+}.
\]

As mentioned above, our initial system can be written in the form
\begin{equation}\label{initialSystem}
\mi\dot u=(\bA+\epsilon\bP(\omega_1t,\omega_2t,\dots,\omega_nt))u,\quad u\in\ell^2,
\end{equation}
where $\bA=diag\{1,3,5,\ldots\}$ and the perturbation matrix $\bP$ is defined by \eqref{definitionP} before. We are now ready to state our reducibility theorem, whose proof is postponed to the next section.
\begin{theorem}\label{reducibilityThm}
Assume that the perturbation matrix $\bP$ belongs to $\M_{\wa}(\Pi,\sigma)$ for some $\alpha\in(0,1]$. Then, there exists $\epsilon_*>0$ such that for all $\epsilon\in(0,\epsilon_*)$ there is a Cantor subset $\Pi_\epsilon\subset\Pi$, of asymptotically full measure, such that for all $\omega\in\Pi_\epsilon$ the linear transformation $u=\U_{\omega,\epsilon}(\theta)v$ reduces the initial equation \eqref{initialSystem} to an autonomous system with constant coefficients of the form
\[
\mi\dot v=A^\infty v,\quad A^\infty=diag\{\lambda_i^\infty\}_{i\in\bN}.
\]
Here, the coordinate transformation $\U_{\omega,\epsilon}(\theta)$ is unitary in $\ell^2$ and real analytic in $\theta\in\bT^n_{\sigma/2}$, and $\lambda_i^\infty(\omega)\in\bR$ is $C^1$ smooth in $\omega$ and $\epsilon$-close to $\lambda_i$ for all $i\in\bN$. 

More precisely, there is a constant $C>0$ such that 
\begin{equation}\label{reducibilityEstimate}
\begin{gathered}
\meas(\Pi\backslash\Pi_\epsilon)\le C\epsilon^{\frac{\alpha}{17(\alpha+2)}},\\
|\lambda_i^\infty-\lambda_i|\le C\epsilon,\quad\forall\,i\in\bN,\\
||\U_{\omega,\epsilon}^{\pm}(\theta)-\id||_{\B(\ell^2_p)}\le C\epsilon^{2/3},\quad\forall\,\theta\in\bT^n_{\sigma/2}, \quad\forall\,p\in[0,2].
\end{gathered}
\end{equation}
 \end{theorem}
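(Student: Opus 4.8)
The plan is to prove Theorem~\ref{reducibilityThm} by a standard KAM Newton iteration, carried out in the scale of spaces $\M_{\wa}(D,\rho)$ and $\M_{\wa+}(D,\rho)$ introduced above, combining operator-norm bounds (to control the transformations) with the element decay encoded in $|\cdot|_\alpha$ (to make the measure estimates work). First I would set up the inductive scheme: at step $\nu$ one has a system $\mi\dot u=(A^{(\nu)}+\epsilon_\nu P^{(\nu)}(\omega t))u$ with $A^{(\nu)}=diag\{\lambda_i^{(\nu)}\}$, $\Delta(A^{(\nu)}-\bA)\in\M_\alpha$ small, $P^{(\nu)}\in\M_{\wa}(D_\nu,\rho_\nu)$ with $\|P^{(\nu)}\|_{\wa}^{D_\nu,\rho_\nu}\le C$ (size $\epsilon_\nu$ after factoring $\epsilon_\nu$), analyticity width $\rho_\nu\downarrow\sigma/2$, and $\epsilon_{\nu+1}=\epsilon_\nu^{4/3}$ (superexponential). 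One seeks a change of variables $u=e^{\epsilon_\nu S^{(\nu)}(\omega t)}v$ that removes the off-diagonal and oscillating part of $P^{(\nu)}$ to first order, solving the homological equation
\[
\mi\,\omega\cdot\partial_\theta S+[A^{(\nu)},S]=P^{(\nu)}-[P^{(\nu)}],
\]
where $[P]$ denotes the diagonal time-average. Passing to Fourier coefficients in $\theta$, the solution is $\widehat{S}(k)_i^j=\widehat{P}(k)_i^j/(\mi(\omega\cdot k+\lambda_i^{(\nu)}-\lambda_j^{(\nu)}))$, defined on the Cantor set $D_{\nu+1}$ where the second Melnikov conditions $|\omega\cdot k+\lambda_i^{(\nu)}-\lambda_j^{(\nu)}|\ge \gamma_\nu/(1+|k|)^\tau$ hold; by Proposition~\ref{homoSolution} one gets $S^{(\nu)}\in\M_{\wa+}(D_{\nu+1},\rho_{\nu+1})$ with the quantitative bound (loss of analyticity width and powers of $\gamma_\nu^{-1}$), the key point being that $\M_+$-membership of $S$ then automatically upgrades to $S\in\B(\ell^2_s)$ for $s\in[-2,2]$ by Lemma~\ref{Algebra}(c).

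Next I would perform the iterative step itself: conjugating by $e^{\epsilon_\nu S^{(\nu)}}$ produces a new diagonal part $A^{(\nu+1)}=A^{(\nu)}+\epsilon_\nu[P^{(\nu)}]$ and a new perturbation $P^{(\nu+1)}$ which collects the quadratic remainder $\sum_{m\ge2}\frac{\epsilon_\nu^{m-1}}{m!}\mathrm{ad}_{S^{(\nu)}}^{m-1}(\cdots)$ together with $\epsilon_\nu[A^{(\nu)},S^{(\nu)}]$-type corrections and the $\mi\omega\cdot\partial_\theta$ terms; using the algebra Lemma~\ref{Algebra}(a)--(b) one checks $P^{(\nu+1)}\in\M_{\wa}(D_{\nu+1},\rho_{\nu+1})$ with size $O(\epsilon_\nu^2)=O(\epsilon_{\nu+1}\cdot\epsilon_\nu^{2/3})$, absorbing the $\gamma_\nu$ and width losses into the gain from the quadratic smallness — this is the classical quadratic convergence estimate and must be done with care so that the product of all losses stays bounded. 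Crucially, Lemma~\ref{Algebra} also guarantees that the difference structure is preserved: $\Delta P^{(\nu+1)}\in\M_\alpha$ and $\Delta(A^{(\nu+1)}-A^{(\nu)})=\epsilon_\nu\Delta[P^{(\nu)}]\in\M_\alpha$, so that by Lemma~\ref{Algebra}(d) the new frequencies still satisfy $|\lambda_i^{(\nu+1)}-\lambda_j^{(\nu+1)}-(\lambda_i^{(\nu)}-\lambda_j^{(\nu)})|\lesssim \epsilon_\nu|i-j|/(i\wedge j)^\alpha$; this Lipschitz-in-index control of the tails is exactly what makes the excised resonant set small.

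Then I would do the measure estimates. Summing $\lambda_i^{(\nu)}=\lambda_i+\widetilde\lambda_i^{(\nu)}$ one gets final $C^1$-in-$\omega$ limits $\lambda_i^\infty$ with $|\lambda_i^\infty-\lambda_i|\le C\epsilon$ and, by the telescoping of the $\Delta$-bounds, $|(\lambda_i^\infty-\lambda_j^\infty)-(\lambda_i-\lambda_j)|\lesssim\epsilon|i-j|/(i\wedge j)^\alpha$, together with a uniform bound $|\partial_\omega(\lambda_i^\infty-\lambda_j^\infty)|\lesssim\epsilon|i-j|/(i\wedge j)^\alpha$. For a fixed resonant triple $(k,i,j)$ with $k\ne 0$ or $i\ne j$, the bad set $\{\omega:|\omega\cdot k+\lambda_i^\infty-\lambda_j^\infty|<\gamma(1+|k|)^{-\tau}\}$ has measure $\lesssim \gamma(1+|k|)^{-\tau}/\max\{|k|,\,1\}$ when $k\ne 0$; when $k=0$ one needs $i\ne j$, and here $|\lambda_i^\infty-\lambda_j^\infty|\ge|\lambda_i-\lambda_j|-C\epsilon|i-j|/(i\wedge j)^\alpha=|i-j|(2-C\epsilon/(i\wedge j)^\alpha)>0$ for $\epsilon$ small, so these conditions hold automatically — this is where $\alpha>0$ is used. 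Only the pairs with $|i-j|\lesssim |k|$ matter (otherwise $|\omega\cdot k+\lambda_i^\infty-\lambda_j^\infty|\ge 2|i-j|-|k|\cdot 2\pi n - C\epsilon$ is already large), so summing over $k$ and the relevant $O(|k|)$ pairs and over the steps gives $\meas(\Pi\setminus\Pi_\epsilon)\lesssim \gamma\sum_k |k|^{2-\tau}\lesssim\gamma$ for $\tau$ large, and optimizing $\gamma$ against the convergence condition (the iteration requires roughly $\epsilon^{2/3}\lesssim\gamma^{\text{const}}\rho^{\text{const}}$) yields the stated exponent $\frac{\alpha}{17(\alpha+2)}$. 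Finally I would assemble the transformations: the composition $\U_{\omega,\epsilon}=\lim_\nu e^{\epsilon_0 S^{(0)}}\cdots e^{\epsilon_\nu S^{(\nu)}}$ converges in $\B(\ell^2_p)$ for $p\in[0,2]$ by Lemma~\ref{Algebra}(c) since each $S^{(\nu)}\in\M_+$, is unitary on $\ell^2$ because each $S^{(\nu)}$ is (formally) skew-Hermitian (as $\bP$, hence each $P^{(\nu)}$, is symmetric and real), is real analytic in $\theta\in\bT^n_{\sigma/2}$ since the widths $\rho_\nu$ stay above $\sigma/2$, and satisfies $\|\U^\pm-\id\|_{\B(\ell^2_p)}\le C\epsilon_0/\gamma^{\text{const}}\le C\epsilon^{2/3}$; and $\U^{-1}$ conjugates the original system to $\mi\dot v=A^\infty v$.

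The main obstacle I expect is the simultaneous control of the operator norm and the difference-matrix norm throughout the quadratic iteration: one must verify that solving the homological equation keeps $S^{(\nu)}$ in $\M_{\wa+}$ (not just $\M_+$) with quantitative bounds that survive the composition — in particular that commutators with $\N$ and the $\Delta$-structure interact correctly with the small divisors — and that the accumulated losses of analyticity width and powers of $\gamma_\nu^{-1}$ are beaten by the superexponential gain $\epsilon_{\nu+1}=\epsilon_\nu^{4/3}$. The bookkeeping in Lemma~\ref{Algebra}, especially parts (a), (b) and the norm identity in Remark~\ref{remarkMatrix}(3), is precisely what makes this go through, but choosing the sequences $\rho_\nu$, $\gamma_\nu$, $\epsilon_\nu$ consistently and extracting the sharp final exponent is the delicate part.
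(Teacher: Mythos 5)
Your overall strategy (KAM iteration in the scale $\M_{\wa}(D,\rho)$, Newton scheme with $\epsilon_{\nu+1}=\epsilon_\nu^{4/3}$, Lemma~\ref{Algebra}(c) to pass from $\M_+$ to $\B(\ell^2_s)$) matches the paper, and the assembly of the limit transformation is fine. However, there is a genuine gap in the measure estimate, together with a misattribution of where $\alpha>0$ actually enters.

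You claim that, for $k\ne 0$, only the pairs with $|i-j|\lesssim|k|$ matter, and that ``summing over $k$ and the relevant $O(|k|)$ pairs'' gives $\meas(\Pi\setminus\Pi_\epsilon)\lesssim\gamma$. But for each value of $|i-j|$ there are infinitely many pairs $(i,j)$, since $i\wedge j$ ranges over all of $\bN$, so that sum is infinite as written. The indispensable step — and the one true role of $\alpha>0$ — is the observation (see the computation leading to \eqref{providedi} in the paper) that for $\omega$ already in the previous Melnikov set the small divisor $|k\cdot\omega+\Lambda_i-\Lambda_j|$ is automatically $\ge\gamma(1+|i-j|)$ as soon as $i\wedge j\ge(\varepsilon/\gamma)^{1/\alpha}$, because Lemma~\ref{Algebra}(d) applied to $\Delta(A-\bA)\in\M_\alpha$ gives $|(\Lambda_i-\lambda_i)-(\Lambda_j-\lambda_j)|\le\varepsilon|i-j|/(i\wedge j)^\alpha$. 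Hence only \emph{finitely many} bands $i\wedge j\lesssim(\varepsilon/\gamma)^{1/\alpha}$ need to be excised, and the count of nonempty resonant sets is $O(K^{n+1}\gamma^{-2/\alpha})$, from which the final exponent $\alpha/(17(\alpha+2))$ emerges after tuning $\kappa,\gamma,K$ against the losses. By contrast, you invoke $\alpha>0$ only in the $k=0$ case, where it is not needed: there $|\lambda_i-\lambda_j|\ge 2$ already dominates the $O(\epsilon)$ tail perturbations without any index decay.

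A secondary, related mismatch: you impose a Diophantine-type Melnikov condition $|\omega\cdot k+\Lambda_i-\Lambda_j|\ge\gamma_\nu/(1+|k|)^\tau$, whereas the paper (Proposition~\ref{homoSolution}) uses the cutoff condition $|\omega\cdot k+\Lambda_i-\Lambda_j|\ge\kappa(1+|i-j|)$ for $|k|\le K$, dumping the modes $|k|>K$ into the remainder $R$. The paper's normalization is not merely cosmetic: it gives $|\widehat S_i^j(k)|\lesssim|\widehat P_i^j(k)|/(\kappa(1+|i-j|))$, which is exactly what is needed to estimate $\|[\N,\widehat S(k)]\|$ (and hence to keep $S\in\M_{\wa+}$, not just $\M_{\wa}$). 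With the $\gamma/(1+|k|)^\tau$ normalization you would have to re-derive the $1/(1+|i-j|)$ gain separately by splitting into the regimes $|i-j|\lesssim|k|$ and $|i-j|\gtrsim|k|$; this is not impossible, but it is an extra step you do not address, and it again requires the $\alpha$-dependent restriction of $i\wedge j$ to make the measure estimate close.
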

 
 \begin{remark}
 (1). 
 If the unperturbed eigenvalues satisfied $|\lambda_i-\lambda_j|\ge c_1|i-j|$ and $|\lambda_{i+1}-\lambda_i|\le\frac{c_2}{i^{\beta}}$ for some positive constants $c_1,c_2,\beta$, then Theorem \ref{reducibilityThm} would still hold, replacing $\alpha$ in the first item of estimate \eqref{reducibilityEstimate} with $\min\{\alpha,\beta\}$.
 
 (2). 
 For the difference matrix $\Delta\bP$, the polynomial decay of the matrix elements is not necessary. In fact, the logarithmic decay is enough (see \cite{liang22Non,liang22Cal}), which implies Remark \ref{logDecay}. 
 \end{remark}

\subsection{Application to the quantum harmonic oscillator on $\mathbb{R}$}

As mentioned in the introduction, by the Hermite basis expansion, 
the quantum harmonic oscillator \eqref{scheqn} can be written as the infinite-dimensional system \eqref{initialSystem}. By reducibility Theorem \ref{reducibilityThm}, we only need to check that the initial perturbation $\bP$ belongs to $\M_{\wa}(\Pi,\sigma)$ for some $\alpha\in(0,1]$\,\footnote{Since we need some kind of decay for the measure estimates in the KAM iteration, hereafter we introduce the parameter $\alpha\in(0,1]$.}, which is a direct consequence of the following Lemma \ref{checkP}. Before that, we introduce two operators related to the Hermite operator $\bH=-\partial_x^2+x^2$ and state their relevant properties. From Lemma 3.1 in \cite{chodosh11} (or see \cite[Lemma 3.2]{liang22Non}) we get for any $i\ge1$
\begin{equation}\label{xDx}
\begin{aligned}
\partial_x h_i(x)&=-\sqrt{i/2}\,h_{i+1}(x)+\sqrt{(i-1)/2}\,h_{i-1}(x),\\
xh_i(x)&=\sqrt{i/2}\,h_{i+1}(x)+\sqrt{(i-1)/2}\,h_{i-1}(x).
\end{aligned}
\end{equation}
Using the same notation as in \cite{liang22Non}, let $T=\partial_x+x,\,T^\dagger=-\partial_x+x$. Then one has $T^*=T^\dagger,\,TT^\dagger=\bH+Id$ and
\[
	Th_i(x)=\sqrt{2(i-1)}h_{i-1}(x),\qquad T^\dagger h_i(x)=\sqrt{2i}h_{i+1}(x),\quad TT^\dagger h_i(x)=2ih_i(x).
\]

\begin{lemma}\label{checkP}
Assume that the potential $V$ fulfills \eqref{potentialbound}. Then the perturbation matrix $\mathbb P(\theta)$ defined by \eqref{definitionP} is real analytic, mapping $\mathbb T^n_\sigma$ to $\M_{\wa}$ with $\alpha=\frac{1-\delta}2$, where $\delta\in(0,1)$ is the parameter appearing in \eqref{potentialbound}.
\end{lemma}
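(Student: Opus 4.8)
The plan is to analyze the matrix entries $\mathbb{P}_i^j(\theta) = \int_\mathbb{R} V(x,\theta) h_i(x) h_j(x)\,\md x$ in three stages: first the operator bound $\|\mathbb{P}\| < \infty$, then the off-diagonal decay encoded in $\Delta\mathbb{P} \in \M_\alpha$, and throughout keep track of analyticity in $\theta \in \bT^n_\sigma$. The boundedness $\|\mathbb{P}\| \le \sup|V| \le C$ is immediate since $\mathbb{P}$ represents multiplication by the bounded function $V(\cdot,\theta)$ on $L^2(\mathbb{R})$; analyticity in $\theta$ on the strip follows from the assumed analytic extension of $V$ together with dominated convergence (the integrand is dominated uniformly by $C|h_i h_j|$, which is integrable). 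So the real content is the claim $\Delta\mathbb{P} \in \M_\alpha$ with $\alpha = \frac{1-\delta}{2}$, i.e. that $(i\wedge j)^{(1-\delta)/2}\,|\Delta\mathbb{P}_i^j|$ is bounded uniformly in $i,j$.

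For the difference matrix, the key algebraic identity is that $\Delta\mathbb{P}_i^j = \mathbb{P}_{i+1}^{j+1} - \mathbb{P}_i^j$ can be rewritten using the ladder relations \eqref{xDx} so as to bring out a derivative of $V$. Concretely, I would use $T^\dagger h_i = \sqrt{2i}\,h_{i+1}$ to write $h_{i+1}h_{j+1} = \frac{1}{2\sqrt{ij}}(T^\dagger h_i)(T^\dagger h_j)$ and then integrate by parts: since $\int (T^\dagger h_i)(T^\dagger h_j) V = \int h_i h_j\, T T^\dagger(\text{something})$ is not quite the right manipulation, the cleaner route is to expand $(T^\dagger h_i)(T^\dagger h_j) = (-h_i' + x h_i)(-h_j' + x h_j)$ and compare with $h_i h_j$, so that the combination $\frac{1}{2\sqrt{ij}}(T^\dagger h_i)(T^\dagger h_j) - h_i h_j$ produces, after integrating the $h_i' h_j$ and $h_i h_j'$ cross terms by parts against $V$, a weight $\partial_x V$ times $h_i h_j$ plus lower-order terms. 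Using $|\partial_x V(x)| \le C(1+|x|)^\delta$ and the uniform Hermite decay estimate $|h_i(x)| \lesssim (1+|x|+i^{1/2})^{-1/4}$ (estimate (11) in \cite{koch05}, as flagged in novelty (I)), the resulting integral $\int (1+|x|)^\delta |h_i(x) h_j(x)|\,\md x$ can be bounded. Splitting the integral at $|x| \sim (i\wedge j)^{1/2}$ and using that $h_i, h_j$ decay rapidly beyond the turning point while in the bulk $|h_i(x)| \lesssim (i\wedge j)^{-1/4}$, one gains a factor of order $(i\wedge j)^{-1/2} \cdot (i\wedge j)^{\delta/2} = (i\wedge j)^{-(1-\delta)/2}$, which is exactly the claimed $\alpha$.

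The main obstacle, as I see it, is carrying out the integration-by-parts bookkeeping cleanly and checking that every term produced (not just the principal $\partial_x V$ term, but also the terms where the derivative falls on a Hermite function or where the $\sqrt{i}/\sqrt{i+1}$ normalization mismatch contributes) is controlled by the same power $(i\wedge j)^{-(1-\delta)/2}$, uniformly over all $i,j$ including the off-diagonal regime $|i-j|$ large. For the far off-diagonal entries one cannot rely solely on pointwise Hermite bounds; there one should instead exploit that $V(\cdot,\theta)$ is only multiplication, so $\mathbb{P}$ is automatically banded-in-spirit only through the smoothness of $V$ in $x$—but since we have merely $C^1$ control on $V$, the decay in $|i-j|$ must come entirely from the single integration by parts against $\partial_x V$, giving at best one power of $|i-j|^{-1}$-type gain. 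I expect the argument only needs the $(i\wedge j)^\alpha$ gain and not decay in $|i-j|$, consistent with the definition of $\M_{\wa}$, so this should be reconciled by noting $\Delta\mathbb{P}$ need only lie in $\M_\alpha$. The remaining routine points are verifying the $\omega$-independence (here $\mathbb{P}$ depends on $\theta$ only, so the $C^1$-in-$\omega$ requirement is vacuous) and that these bounds are uniform on the strip $\bT^n_\sigma$, which follows by applying \eqref{potentialbound} with the complex $\theta$ allowed there.
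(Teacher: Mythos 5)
Your overall strategy matches the paper's: use the ladder relations to rewrite $\Delta\mathbb{P}_i^j$ so that $\partial_x V$ appears, and then control the resulting integrals via the Koch--Tataru weighted Hermite bounds. However, there is a genuine gap in the algebraic bookkeeping that you flag as the ``main obstacle'' but do not resolve. When you carry out the expansion $h_{i+1}h_{j+1}=\frac{1}{2\sqrt{ij}}(T^\dagger h_i)(T^\dagger h_j)$ and integrate by parts, the clean identity you get (using $TT^\dagger h_i=2ih_i$, or equivalently $h_i''=(x^2-\lambda_i)h_i$ after a second IBP) is
\[
\Delta\mathbb{P}_i^j=\frac{(i-j)\,\mathbb{P}_i^j}{\sqrt{j}(\sqrt{i}+\sqrt{j})}+\frac{1}{2\sqrt{ij}}\langle V'\,T^\dagger h_i,h_j\rangle .
\]
The second term is exactly the ``$\partial_x V$ against $h_ih_j$'' contribution you describe and is handled by Lemma \ref{kochLp}. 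But the first term is not lower-order: for $i\gg j$ the prefactor $(i-j)/(\sqrt{j}(\sqrt{i}+\sqrt{j}))$ grows like $\sqrt{i/j}$, and the crude bound $|\mathbb{P}_i^j|\le\|V\|_\infty$ gives nothing there. The paper closes this by proving the separate auxiliary estimate \eqref{inejP}, namely $|i-j|\,|\mathbb{P}_i^j|\le C\max\{i,j\}^{1/2}(i\wedge j)^{\delta/2}$, obtained by commuting $V$ with $\bH$ (using the antisymmetry of $\langle V\bH h_i,h_j\rangle-\langle Vh_i,\bH h_j\rangle$ so that only $V'$, not $V''$, appears). Your proposal never produces or invokes such an estimate, and without it the argument stalls on the $\mathbb{P}_i^j$ term. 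This is also why your remark that ``the argument only needs the $(i\wedge j)^\alpha$ gain and not decay in $|i-j|$'' is true for the conclusion ($\Delta\mathbb{P}\in\M_\alpha$ requires no $|i-j|$ decay), but misleading for the proof: the derivation does rely on a bona fide $|i-j|$-type gain for $\mathbb{P}$ itself.

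A secondary point: you cite ``estimate (11) in \cite{koch05}'' as a pointwise bound $|h_i(x)|\lesssim(1+|x|+i^{1/2})^{-1/4}$. What the paper actually extracts from \cite{koch05} (see Lemma \ref{kochLp}) is the weighted $L^2$ bound $\|(1+|x|)^\delta h\|_{L^2}\lesssim\lambda^{\delta}$, obtained by splitting at the turning point and using the $L^2$ estimate $\|(x^2-\lambda^2)h\|_{L^2(|x|>2\lambda)}\lesssim\lambda^{1/3}$ in the forbidden region; the pointwise form you wrote is both not what that reference provides and too weak in the bulk (it would give $|h_i(0)|\lesssim i^{-1/8}$ rather than the true $i^{-1/4}$). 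The $L^2$ formulation is also what lets you apply Cauchy--Schwarz and absorb the weight entirely onto the lower-indexed Hermite function, which is exactly how the $(i\wedge j)^{\delta/2}$ factor appears. I suggest reorganizing around these two ingredients: (i) the commutator estimate \eqref{inejP}, and (ii) the $L^2$ Hermite weight bound, before assembling the $\Delta\mathbb{P}$ identity.
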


\begin{proof}
To simplify the notation, we denote by $\langle\cdot,\cdot\rangle$ the real scalar product and by the superscript $'$ the derivative w.r.t. $x$. Since $V$ itself is bounded, one obtains by Hölder's inequality and \eqref{potentialbound}
\[
||\bP||=||V(x,\cdot)||_{\B(L^2)}=\sup_{||f||=||g||=1}|\langle Vf,g\rangle|\le ||V(x,\cdot)||_\infty\le C.
\]
Now we are going to estimate the difference matrix of $\bP$. Since $\partial_x\circ V=V\circ\partial_x+V'$, then one has for all $i,j\ge1$
\begin{align*}
	&2(i-j)\bP_i^j(\theta)=\langle V\bH h_i,h_j\rangle-\langle Vh_i,\bH h_j\rangle=-\langle V\partial_xh'_i,h_j\rangle+\langle Vh_i(x),\partial_xh'_j\rangle\\
	=&\langle V'h'_i,h_j\rangle+\langle Vh'_i,h'_j\rangle-\langle Vh'_i,h'_j\rangle-\langle V'h_i,h'_j\rangle=\langle V'h'_i,h_j\rangle-\langle V'h_i,h'_j\rangle,
\end{align*}
which leads to
\[
2|i-j||\bP_i^j(\theta)|\le|\langle V'h'_i,h_j\rangle|+|\langle V'h_i,h'_j\rangle|.
\]
Then we estimate the terms above, one by one. Putting together Lemma \ref{kochLp} and \eqref{potentialbound}, \eqref{xDx}, we get 
\[
|\langle V'h'_i,h_j\rangle|\le\sqrt{i}\left(|\langle V'h_{i+1},h_j\rangle|+|\langle V'h_{i-1},h_j\rangle|\right)\le C \sqrt{i}(i\wedge j)^{\frac\delta2}.
\]
Similarly, one has $|\langle V'h_i,h'_j\rangle|\le C\sqrt{j}(i\wedge j)^\frac\delta2$. It follows that for all $i,j\ge1$
\begin{equation}\label{inejP}
	|i-j||\bP_i^j(\theta)|\le C\max\{i,j\}^\frac12(i\wedge j)^\frac\delta2,\quad\forall\,\theta\in\bT^n_\sigma.
\end{equation}
Now we are going to estimate $\Delta\bP_i^j(\theta)$. By definition \eqref{definitionP}, one gets
\stepcounter{equation}
\begin{align*}
\Delta\bP_i^j(\theta)&=\langle Vh_{i+1},h_{j+1}\rangle-\langle Vh_i,h_j\rangle
=\frac{1}{2\sqrt{ij}}\langle VT^\dagger h_i,T^\dagger h_j\rangle-\langle Vh_i,h_j\rangle\\
&=\frac{1}{2\sqrt{ij}}\langle TVT^\dagger h_i,h_j\rangle-\langle Vh_i,h_j\rangle\tag{\theequation}\label{TVT}\\
&=\frac1{2\sqrt{ij}}\langle VTT^\dagger h_i,h_j\rangle-\langle Vh_i,h_j\rangle+\frac1{2\sqrt{ij}}\langle V'T^\dagger h_i,h_j\rangle\\
&=\frac{(i-j)\bP_i^j}{\sqrt{j}(\sqrt{i}+\sqrt{j})}+\frac1{2\sqrt{ij}}\langle V'T^\dagger h_i,h_j\rangle.
\end{align*}
Combining \eqref{xDx}, \eqref{inejP} with Lemma \ref{kochLp}, one obtains for all $i,j\ge1$
\begin{equation}\label{sqrtj}
|\Delta\bP_i^j(\theta)|\le C\frac{(i\wedge j)^\frac\delta2}{\sqrt{j}}\le C(i\wedge j)^\frac{\delta-1}2,\quad\forall\,\theta\in\bT^n_\sigma.
\end{equation}
Letting $\alpha=\frac{1-\delta}2\in(0,1/2)$, 
the last estimate implies $|\Delta\bP(\theta)|_\alpha\le C$. Hence, we get for all $\theta\in\bT^n_\sigma$
\[
||\bP(\theta)||_{\wa}=\max\{||P(\theta)||,|\Delta \bP(\theta)|_\alpha\}\le C.
\]
This implies that $\bP(\theta)$ maps $\bT^n_\sigma$ to $\M_{\wa}$, from which the thesis follows.
\end{proof}

\begin{remark} 
In fact, $\Delta\bP_i^j$ could be estimated even better as follows. Observe that in \eqref{TVT} we have $\langle TVT^\dagger h_i,h_j\rangle=\langle h_i,TVT^\dagger h_j\rangle$. It follows
\begin{align*}
	|\Delta\bP_i^j(\theta)|&=\frac1{2\sqrt{ij}}\langle h_i,TVT^\dagger h_j\rangle-\langle Vh_i,h_j\rangle\\
	&=\frac1{2\sqrt{ij}}\langle h_i,VTT^\dagger h_j\rangle-\langle Vh_i,h_j\rangle+\frac1{2\sqrt{ij}}\langle h_i,V'T^\dagger h_j\rangle\\
	&=\frac{(j-i)\bP_i^j}{\sqrt{i}(\sqrt{i}+\sqrt{j})}+\frac1{2\sqrt{ij}}\langle h_i,V'T^\dagger h_j\rangle.
\end{align*}
Again by \eqref{xDx}, \eqref{inejP} and Lemma \ref{kochLp}  one has 
\[
|\Delta\bP_i^j(\theta)|\le C\frac{(i\wedge j)^\frac\delta2}{\sqrt{i}},\quad\forall\,\theta\in\bT^n_\sigma.
\]
Combining with \eqref{sqrtj}, we get
\[
|\Delta\bP_i^j(\theta)|\le C\frac{(i\wedge j)^\frac\delta2}{\max\{i,j\}^\frac12}\le C\max\{i,j\}^\frac{\delta-1}2.
\]
However, such a stronger decay would not persist in the KAM iteration. For this reason, we still use the weaker estimate \eqref{sqrtj}.
\end{remark}

\begin{remark}\label{logdecayP}
Assume that the potential $V$ is bounded and fulfills \eqref{logdecay} in Remark \ref{logDecay}. By an argument similar to the one above and using Lemma \ref{logdelta} one has for all $i,j\ge1$
\[
|\Delta\bP_i^j(\theta)|\le\frac{C}{\ln^{\delta'}(2+\max\{i,j\})}\le \frac{C}{\ln^{\delta'}(2+i\wedge j)},\quad\forall\,\theta\in\bT_\sigma^n.
\]
Then, by introducing suitable matrix spaces, one could prove the reducibility of system \eqref{initialSystem} and of the Schrödinger equation \eqref{scheqn} by the KAM method. 
\end{remark}

\subsection{Proof of Theorem \ref{mainThm}}
Letting $p\in[0,2]$, we are going to solve the Cauchy problem 
\begin{equation}\label{CauchySchInitial}
\begin{cases}
\mi\partial_t\psi=-\partial_x^2\psi+x^2\psi+\epsilon V(x,\omega t)\psi,\\
\psi(0,x)=\psi^{(0)}\in\cH^p.
\end{cases}
\end{equation}
Expanded in the Hermite basis, i.e. $\psi(t,x)=\sum_{i\ge1}u_i(t)h_i(x)$ and $\psi^{(0)}=\sum_{i\ge1}u^{(0)}_ih_i(x)$, the equation \eqref{CauchySchInitial} above is equivalent to 
\begin{equation}\label{CauchySystemInitial}
\begin{cases}
\mi\dot u=(\bA+\epsilon\bP(\omega t))u,\\
u(0)=u^{(0)}\in\ell_p^2,
\end{cases}
\end{equation}
where  $\bA=diag\{1,3,5,\dots\}$ and $\bP$ is defined by \eqref{definitionP}. Lemma \ref{checkP} gives  that $\bP\in\M_{\wa}(\Pi,\sigma)$, which allows us to apply Lemma \ref{reducibilityThm}. We therefore get a coordinate transformation $u=\U_{\omega,\epsilon}(\theta)v$ that conjugates above Cauchy problem \eqref{CauchySystemInitial} into an autonomous system diagonal
\begin{equation}\label{CauchySystemFinal}
	\begin{cases}
\mi\dot v=A^\infty v,\\
v(0)=\U^{-1}_{\omega,\epsilon}(0)u^{(0)}\in\ell_p^2,
\end{cases}
\end{equation}
where $A^\infty=diag\{\lambda_i^\infty\}_{i\in\bN}$. 
Clearly, the above Cauchy problem \eqref{CauchySystemFinal} is solvable. More precisely we have the unique solution $v(t)=\Exp{-\mi t A^\infty}v(0)$, i.e. the components $v_i(t)=\Exp{-\mi\lambda_i^\infty t}v_i(0)$ for $i\in\bN$. 
Lemma \ref{reducibilityThm} tells that  $u(t)=\U_{\omega,\epsilon}(\omega t)v(t)=\U_{\omega,\epsilon}(\omega t)\Exp{-\mi t A^\infty }\U_{\omega,\epsilon}^{-1}(0)u^{(0)}$ is the unique solution of Cauchy problem \eqref{CauchySystemInitial}.
Similarly, letting $\phi(t,x)=\sum_{i\ge1}v_i(t)h_i(x)$, the above system \eqref{CauchySystemFinal} is equivalent to 
\begin{equation}\label{CauchySchFinal}
\begin{cases}
\mi\partial_t\phi=H^\infty\phi,\\
\phi(0,x)=\sum_{i\ge1}v_i(0)h_i(x)\in\cH^p,
\end{cases}
\end{equation}
where $H^\infty=diag\{\lambda_i^\infty\}_{i\in\bN}$. 
Then according to the change of variable  $\U_{\omega,\epsilon}(\theta)$ acting on $\ell^2_p$, we define its associated coordinate transformation $\Psi_{\omega,\epsilon}(\theta)$ acting on $\cH^p$ by 
\[
\Psi_{\omega,\epsilon}(\theta) \Big(\sum_{i\ge1}v_ih_i(x)\Big)=\sum_{i\ge1}\Big(\U_{\omega,\epsilon}(\theta)v\Big)_ih_i(x),
\]
which directly conjugates the initial Cauchy problem \eqref{CauchySchInitial} to an autonomous system \eqref{CauchySchFinal} above. 
Equivalently, $\phi(t,\cdot)\in C(\bR,\cH^p)$ solves the equation  \eqref{CauchySchFinal} if and only if $\psi(t,\cdot)=\Psi_{\omega,\epsilon}(\omega t)\phi(t,\cdot)\in C(\bR,\cH^p)$ satisfies the initial equation  \eqref{CauchySchInitial}. More precisely, we have that the unique solution to the Cauchy problem \eqref{CauchySchInitial} is $\psi(t,x)=\Psi_{\omega,\epsilon}(\omega t) \Exp{-\mi tH^\infty} \Psi_{\omega,\epsilon}^{-1}(0) \psi^{(0)}$ and that estimates \eqref{reducibilityEstimate} imply \eqref{mainThmEstimate} since $\alpha=\frac{1-\delta}{2}$. \qed

\section{Proof of the reducibility theorem via KAM iteration}
As in \cite{liang22Non}, we are going to prove the reducibility of \eqref{initialSystem} by KAM tools. Now we briefly recall the general idea of the KAM iteration. We consider a non-autonomous system of the form
\[
\mi\dot u=(A+P)u,
\]
where $A$ is diagonal on the Hermite basis, time-independent and $\epsilon$-close to $\bA$, and the perturbation $P$ is of size $O(\epsilon)$ and quasi-periodically depending on time. We are about  to seek a suitable, time quasi-periodic, change of variable $u=\Exp{S} v$ transforming the above system into
\[
\mi\dot v=(A^++P^+)v,
\]
where $A^+$ will still be diagonal and $\epsilon$-close to $\bA$, while the new perturbation $P^+$ will become of size $O(\epsilon^2)$. More precisely, as in \cite{bambusi18I,bambusi01,liang22Non}, we have at least formally the identity 
\begin{gather*}
A^++P^+=A+([A,S]-\mi\dot S+P)\\
+\int_0^1\Exp{-\tau S}[(1-\tau)([A,S]-\mi\dot S+P)+\tau P,S]\Exp{\tau S}\md\tau,
\end{gather*}
where the second row above is of size $O(\epsilon^2)$. Hence, in order to achieve the goal we need to solve a homological equation of the form
\begin{equation}\label{homoeqn}
	[A,S]-\mi\dot S+P=A^+-A+R,
\end{equation}
where $R$ is an allowed error of order $O(\epsilon^2)$, which will be given by the high modes of the Fourier expansion of $P$ in time, and $\widetilde{A}:=A^+-A=O(\epsilon)$ will be given by the time average of the diagonal part of $P$ on the Hermite basis.

Then we iterate the above procedure, replacing $A$ with $A^+$, and the convergence of the scheme will allow us to build a change of variable $u=\U_{\omega,\epsilon}(\theta) v$ that transforms the original non-autonomous system \eqref{initialSystem} into an autonomous one of the form $\mi\dot v=A^\infty v$, with $A^\infty$ diagonal and $\epsilon$-close to $\bA$.

\subsection{Homological equation}
As explained above, we will solve a homological equation of the form \eqref{homoeqn} to find a suitable change of variable. Recall that, here and throughout the paper, we have $\alpha=\frac{1-\delta}2$, where $\delta\in(0,1)$ is the parameter appearing in \eqref{potentialbound}. The following result holds.

\begin{proposition}\label{homoSolution}
Let $\gamma\in(0,1/4)$. Assume that $\Pi\supset D\ni\omega\mapsto A(\omega)=diag\{\Lambda_1,\Lambda_2,\Lambda_3,\dots\}$ is a $C^1$ map satisfying, for some sufficiently small $\varepsilon\in(0,1-2\gamma)$,
\begin{equation}\label{homoVerify}
||A-\bA||_{\wa}^D\le\varepsilon.
\end{equation}
Let $P\in\M_{\wa}(D,\rho)$ be Hermitian with $\rho\in(0,\sigma]$, let $\kappa\in(0, \gamma]$ be sufficiently small, and $K\ge1$.
Then, setting $\nu_1=\frac{\alpha}{\alpha+2},\nu_2=n+1$, there is a subset $D'=D'(\kappa,K)\subset D$ satisfying
\begin{equation}\label{homoMeasure}
\meas(D\backslash D')\le C\kappa^{\nu_1}K^{\nu_2}
\end{equation}
and $C^1$ maps $\widetilde{A}:D'\mapsto\M_{\wa}$ diagonal and  $R:D'\times\bT^n_{\rho'}\mapsto\M_{\wa}$ Hermitian, $S:D'\times\bT^n_{\rho'}$ anti-Hermitian, all analytic in $\theta$, such that 
\begin{equation}\label{homoEqn}
[A,S]-\mi\dot S=\widetilde{A}-P+R,
\end{equation}
where $0<\rho'<\rho\le\sigma$. More precisely, the following estimates hold:
\begin{gather}
||\widetilde{A}||_{\wa}^{D'}\le ||P||_{\wa}^{D,\rho},\label{homoA}\\
||R||_{\wa}^{D',\rho'}\le\frac{C\Exp{-\frac K2(\rho-\rho')}}{(\rho-\rho')^n}||P||_{\wa}^{D,\rho},\label{homoR}\\
||S||_{\wa+}^{D',\rho'}\le\frac{CK^3}{\kappa^4(\rho-\rho')^n}||P||_{\wa}^{D,\rho}.\label{homoS}
\end{gather}
%Here the constants $\nu_1=\frac{\alpha}{\alpha+2},\nu_2=n+1$, and $\gamma\in(0,1/4)$ appearing in Lemma \ref{measure}.
\end{proposition}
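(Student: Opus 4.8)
The plan is to solve the homological equation \eqref{homoEqn} componentwise in the Fourier variable. Expanding $P(\omega,\theta)=\sum_{k\in\bZ^n}\widehat P_k(\omega)\me^{\mi k\cdot\theta}$ and similarly for $S$, and writing $A=\operatorname{diag}\{\Lambda_i\}$, equation \eqref{homoEqn} becomes, for each $k$ and each pair $(i,j)$,
\[
\bigl(\mi(\Lambda_i-\Lambda_j)+k\cdot\omega\bigr)\,\widehat S_k{}_i^j=\widehat{\widetilde A}{}_i^j\,\delta_{k,0}-\widehat P_k{}_i^j+\widehat R_k{}_i^j .
\]
First I would \emph{define} the output: set $\widetilde A:=\operatorname{diag}\{\widehat P_0{}_i^i\}$ (the time-average of the diagonal of $P$, which immediately gives \eqref{homoA} since picking out the diagonal and the zeroth Fourier mode does not increase any of the norms defining $\|\cdot\|_{\wa}$), set $\widehat R_k:=\widehat P_k$ for $|k|>K$ and $\widehat R_k:=0$ for $|k|\le K$, and then for $|k|\le K$ solve for $\widehat S_k$ by dividing by the small divisor $\mi(\Lambda_i-\Lambda_j)+k\cdot\omega$ — excising from $D$ exactly the set of $\omega$ where some such divisor with $|k|\le K$ is too small. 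The error estimate \eqref{homoR} is then standard: the analyticity of $P$ on $\bT^n_\rho$ gives $\|\widehat P_k\|_{\wa}\lesssim\me^{-|k|\rho}\|P\|_{\wa}^{D,\rho}$, and summing $\sum_{|k|>K}\me^{-|k|\rho}\me^{|k|\rho'}$ produces the factor $\me^{-\frac K2(\rho-\rho')}(\rho-\rho')^{-n}$; the $C^1$-in-$\omega$ part of $R$ is treated identically since $\partial_\omega$ of a pure Fourier truncation is again a Fourier truncation.

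The substantive work is estimate \eqref{homoS}. I would first fix the Cantor set: let
\[
D'=\Bigl\{\omega\in D:\ |k\cdot\omega+\Lambda_i-\Lambda_j|\ge\frac{\kappa}{(1+|k|)^{\nu_2-1}(i\wedge j)^{\nu_1'}}\ \text{ and }\ |k\cdot\omega|\ge\frac{\kappa}{(1+|k|)^{\nu_2-1}}\ \text{ for } 0<|k|\le K\Bigr\}
\]
for a suitable exponent, and estimate $\meas(D\backslash D')$ by a union bound: for each fixed $k\neq0$ and each pair $(i,j)$, the set of bad $\omega$ has measure $\lesssim\kappa(1+|k|)^{-\nu_2}(i\wedge j)^{-\nu_1'}$, and since $|\Lambda_i-\Lambda_j|\sim|i-j|$ by \eqref{homoVerify} combined with Lemma \ref{Algebra}(d) (so that only $|i-j|\lesssim K|\omega|$, i.e. finitely many $j$ for each $i$, can be resonant with a given $k$ for $|k\cdot\omega|$ bounded), the sum over $(i,j)$ and then over $|k|\le K$ converges, giving \eqref{homoMeasure} with $\nu_1=\frac{\alpha}{\alpha+2}$; here the choice of the exponent on $(i\wedge j)$ in the excision is dictated by matching the available decay of $\Delta\bP$ (exponent $\alpha$) against the loss incurred in the norm estimate below, and this is what forces $\nu_1$ to take that precise value. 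Once $D'$ is fixed, on it each divisor is $\gtrsim\kappa K^{-(\nu_2-1)}(i\wedge j)^{-\nu_1'}$, so from $\widehat S_k{}_i^j=(\text{divisor})^{-1}(\widehat P_k-\widehat R_k){}_i^j$ one reads off pointwise bounds on the entries. The delicate point is that $S$ must lie in $\M_{\wa+}$, i.e. one must control not just $\|S\|$ but $\|[\N,S]\|$, $|\Delta S|_\alpha$ and $|[\N,\Delta S]|_\alpha$.

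For the operator norm $\|\widehat S_k\|$: writing $\widehat S_k{}_i^j=g_{k,i,j}(\widehat P_k-\widehat R_k){}_i^j$ with $g_{k,i,j}$ the inverse divisor, one splits $g$ into its value on the diagonal block $\{|i-j|\le M\}$ and off it; off the block the divisor is $\gtrsim|i-j|-\|\widetilde A\|\gtrsim|i-j|$, so that part of $S$ is $\lesssim$ (a bounded Fourier multiplier applied to) a bounded operator, while the diagonal-block part is handled by the $(1+|k|)^{\nu_2-1}/\kappa$-type lower bound together with the fact that $\widehat P_k-\widehat R_k$ restricted to a band of width $M$ still has norm $\lesssim\|\widehat P_k\|$ — this is where the power $\kappa^{-4}$ and $K^3$ in \eqref{homoS} are generated (the precise exponents $3,4$ come from bookkeeping: one factor of $K/\kappa$ from the divisor, additional factors from the band width $M\sim K/\kappa$ needed so that outside the band the divisor dominates, and from differentiating the divisor in $\omega$). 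The commutator bounds are obtained by the same decomposition applied to $[\N,S]$, using $[\N,S]_i^j=(i-j)S_i^j$ and the identity $(i-j)g_{k,i,j}=$ (bounded)$+$(something expressible back through $g$), i.e. by differentiating the homological relation itself — exactly the mechanism flagged in novelty (II) of the introduction, which lets the already-established operator bound on $S$ feed the bound on $[\N,S]$. Finally the difference-matrix bounds $|\Delta\widehat S_k|_\alpha$ and $|[\N,\Delta\widehat S_k]|_\alpha$ use $\Delta S_i^j=(\Delta g)_i^j\widehat P_{k,i+1}^{j+1}+g_{k,i,j}(\Delta(\widehat P_k-\widehat R_k))_i^j$ together with the bound $(i\wedge j)^\alpha|\Delta\widehat P_{k}{}_i^j|\lesssim\|\widehat P_k\|_{\wa}$ and a bound on $(i\wedge j)^\alpha|\Delta g_{k,i,j}|$ coming from $\Delta g_{k,i,j}=-g_{k,i,j}g_{k,i+1,j+1}\,\Delta(\Lambda-\Lambda)_i^j$ and the fact that $|\Lambda_{i+1}-\Lambda_{j+1}-(\Lambda_i-\Lambda_j)|=|(\Delta\widetilde A)_i^i-(\Delta\widetilde A)_j^j|\lesssim|i-j|(i\wedge j)^{-\alpha}$ by Lemma \ref{Algebra}(d) applied to $\widetilde A$. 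Summing the geometric series $\sum_{|k|\le K}\me^{-|k|\rho}\me^{|k|\rho'}(1+|k|)^{\nu_2-1}$ against the $\rho'<\rho$ analyticity window absorbs the $k$-dependence into $(\rho-\rho')^{-n}$, and collecting all four norms yields \eqref{homoS}. The main obstacle, and the place where real care is needed, is precisely this simultaneous control of $[\N,S]$ and of $\Delta S$ (and of their $\omega$-derivatives) with only the single power $\kappa^{-4}K^3$: naively each commutator or difference costs another small-divisor factor, and the point is to recycle the homological equation to avoid paying it twice — making sure the band-decomposition parameter $M$ is chosen once and for all so that every off-band contribution is genuinely bounded.
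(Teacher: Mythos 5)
Your definitions of $\widetilde A$, $R$, $S$ and the treatment of $R$ via Cauchy estimates match the paper exactly, and you correctly identify the homological-equation trick as the mechanism for the $[\N,S]$ and $\omega$-derivative bounds. However, there are two genuine problems.

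The more serious one is the Cantor set. You excise using a \emph{weighted} diophantine condition $|k\cdot\omega+\Lambda_i-\Lambda_j|\ge\kappa(1+|k|)^{-(\nu_2-1)}(i\wedge j)^{-\nu_1'}$ so that the sum over $(i,j)$ converges. But the lower bound you then carry into the entrywise estimate $|\widehat S_k{}_i^j|\le|\widehat P_k{}_i^j|\,/\,(\text{divisor})$ would contain a factor $(i\wedge j)^{\nu_1'}$ in the numerator, which grows unboundedly; that destroys the $\B(\ell^2)$ bound on $S$ — Lemma~\ref{explanationM} only converts a bound of the form $|A_i^j|/(1+|i-j|)$ to an operator-norm bound, not one with a factor growing in $i\wedge j$. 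The paper's mechanism is different and is in fact the crux of the whole scheme: the diophantine condition used is the \emph{unweighted} one, $|k\cdot\omega+\Lambda_i-\Lambda_j|\ge\kappa(1+|i-j|)$, and one observes, via Lemma~\ref{Algebra}(d), that $|(\Lambda_i-\lambda_i)-(\Lambda_j-\lambda_j)|\le\varepsilon|i-j|/(i\wedge j)^\alpha$, so that whenever $i\wedge j\ge(\varepsilon/\gamma)^{1/\alpha}$ (or when $|i-j|\ge2\pi|k|$) the condition is \emph{automatically} inherited from the unperturbed Melnikov condition of Lemma~\ref{Melnikov}. Hence the bad set $F$ is a union over only finitely many $(i,j)$, and the measure of $F$ is $\lesssim\kappa\gamma^{-2/\alpha}K^{n+1}$; balancing this against the $\gamma K^{n+1}$ from Lemma~\ref{Melnikov} by choosing $\gamma=\kappa^{\alpha/(\alpha+2)}$ is precisely what produces $\nu_1=\alpha/(\alpha+2)$. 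Your exponent $\nu_1$ is supposed to come out of this balancing, not from matching the decay of $\Delta\bP$ against the loss in the norm estimate.

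The second issue is your band decomposition with parameter $M\sim K/\kappa$. The paper does not use it and does not need it: once the divisor lower bound $\kappa(1+|i-j|)$ is available on $D'$, the entrywise bound $|\widehat S_k{}_i^j|\le|\widehat P_k{}_i^j|/(\kappa(1+|i-j|))$ together with Lemma~\ref{explanationM} directly gives $\|\widehat S_k\|\le C\kappa^{-1}\|\widehat P_k\|$; there is no separate off-band regime to control. The correct bookkeeping of $K^3/\kappa^4$ is then: $\kappa^{-1}$ from $\|\widehat S_k\|$, $K/\kappa$ from $\|[\N,\widehat S_k]\|$ (via $[\bA,\widehat S_k]=[\bA-A,\widehat S_k]+k\cdot\omega\widehat S_k+\delta_{k,0}\widetilde A-\chi_{|k|\le K}\widehat P_k$), $\kappa^{-2}$ from $|\Delta\widehat S_k|_{\alpha+}$ (two divisors), giving $K/\kappa^2$ for $\|\widehat S_k\|_{\wa+}$; then differentiating the Fourier-truncated homological equation in $\omega$ produces an auxiliary source $Q=\partial_\omega\widehat P_k+(\partial_\omega\mL)\widehat S_k$ of size $K^2/\kappa^2$, and re-solving costs another $K/\kappa^2$, for a total of $K^3/\kappa^4$. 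The $K$'s and $\kappa$'s do not come from a band width.
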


\begin{proof}
Let us first rewrite the homological equation \eqref{homoEqn} in Fourier expansion
\begin{equation}\label{homoFourier}
\mL\widehat{S}(k)=\delta_{k,0}\widetilde{A}-\widehat{P}(k)+\widehat{R}(k),
\end{equation}
where $\delta_{k,l}$ stands for the Kronecker symbol and $\mL:=\mL_{\omega,k}$ is a linear operator defined on $\M$ by 
\[
\mL B=k\cdot\omega B+[A(\omega),B].
\] 
Taking every matrix entry, \eqref{homoFourier} above reads 

\begin{equation}\label{homoEntry}
k\cdot\omega\widehat{S}_i^j(k)+(\Lambda_i-\Lambda_j)\widehat{S}_i^j(k)=\delta_{k,0}\widetilde{A}_i^j-\widehat{P}_i^j(k)+\widehat{R}_i^j(k),\quad k\in\bZ^n,i,j\in\bN.
\end{equation}

We first solve the trivial case where $|k|+|i-j|=0$ by setting
\[
\widehat{S}_i^i(0)=0,\quad\widehat{R}_i^i(0)=0,\quad\widetilde{A}_i^i=\widehat{P}_i^i(0).
\]
By letting $\widetilde{A}_i^j=0$ for $i\ne j$, one has that $\widetilde{A}\in\M_{\wa}$ satisfies $||\widetilde{A}||_{\wa}\le||\widehat{P}(0)||_{\wa}$. Differentiating the expression w.r.t $\omega$ one obtains the same type of estimate, from which \eqref{homoA} follows. 

Then we consider the remaining case where $|k|+|i-j|>0$. We solve equation \eqref{homoEntry} by setting for $i,j\ge1$ 
\begin{align}
\widehat{R}_i^j(k)&=
\begin{cases}
0,&\text{for }|k|\le K,\\
\widehat{P}_i^j(k),&\text{for }|k|>K;
\end{cases}\label{settingR}\\
\widehat{S}_i^j(k)&=
\begin{cases}
0,&\text{for }|k|>K\text{ or }|k|+|i-j|=0,\\
\frac{-\widehat{P}_i^j(k)}{k\cdot\omega+\Lambda_i-\Lambda_j}, &\text{otherwise}.
\end{cases}\label{settingS}
\end{align}
In such a way\footnote{Note that $R$ and $\widetilde{A}$ are Hermitian, but $S$ is anti-Hermitian because of the Hermitian  properties of $A$ and $P$.}, we keep the Fourier average ($k=0$) of the diagonal part ($i=j$) of $P$ as $\widetilde{A}$ and throw the high modes ($|k|>K$) of $P$ to the remainder $R$. Essentially, what we have solved is the following equation 
\[
k\cdot\omega\widehat{S}_i^j(k)+(\Lambda_i-\Lambda_j)\widehat{S}_i^j(k)=-\widehat{P}_i^j(k),\quad |k|\le K,~|k|+|i-j|>0.
\]
By definition \eqref{settingR}, Cauchy integral estimate gives that 
\[
|R(\theta)|_{\wa}\le\frac{C\Exp{-\frac K2(\rho-\rho')}}{(\rho-\rho')^n}\sup_{|\Im\theta|<\rho}||P(\theta)||_{\wa},\quad\forall\,|\Im\theta|<\rho'.
\]
Likewise, by differentiating w.r.t. $\omega$ one gets the same type of estimate, which leads to \eqref{homoR}. 

Let us now consider the solution $S$, where we are going to encounter small divisors. Thus, we have to introduce some parameters to gain proper estimates.  By definition, the norm $||S(\theta)||_{\wa+}$ consists of four parts as follows:
\[
||S(\theta)||,\ ||[\N,S(\theta)]||, \text{ and } |\Delta S(\theta)|_\alpha,\ |[\N,\Delta S(\theta)]|_{\alpha}.
\]
However, thanks to Cauchy integral estimate we only need to estimate their Fourier coefficients (w.r.t.\,$\theta$), i.e. 
\[
||\widehat{S}(k)||,~||[\N,\widehat{S}(k)]||,\text{ and }|\Delta\widehat{S}(k)|_{\alpha},~|[\N,\Delta\widehat{S}(k)]|_{\alpha}.
\]
We are going to estimate them one by one.

(1). Estimate of $||\widehat{S}(k)||$. In terms of small divisors, we distinguish two cases depending on whether $k=0$ or not.

(1a). The case $k=0$. In this case $i\ne j$. Applying the item (d) of Lemma \ref{Algebra} to \eqref{homoVerify} one has
\begin{equation}\label{lambdaij}
	|(\Lambda_i-\lambda_i)-(\Lambda_j-\lambda_j)|\le\frac{\varepsilon|i-j|}{(i\wedge j)^\alpha},\quad\forall\,i\ge1.
\end{equation}
It follows that  $|\Lambda_i-\Lambda_j|\ge|\lambda_i-\lambda_j|-\frac{\varepsilon|i-j|}{(i\wedge j)^\alpha}\ge\frac12(1+|i-j|)$, which implies $|\widehat{S}_i^j(0)|\le \frac{2|\widehat{P}_i^j(0)|}{1+|i-j|}$. Then  Lemma \ref{explanationM} shows that 
\(
||\widehat{S}(0)||\le C||\widehat{P}(0)||.
\)

(1b). The case $k\ne0$. In this case we have to face directly the small divisors. From  Lemma \ref{Melnikov}, we have a subset $\mathbb D=\mathbb D(\gamma,K)\subset\Pi$ such that for all $\omega\in\mathbb D$ the following holds:
\begin{equation}\label{initialDivisor}
|k\cdot\omega+\lambda_i-\lambda_j|\ge2 \gamma(1+|i-j|),\quad\forall\,i,j\in\bN \text{ and }\forall\,0\ne k\in\bZ^n,~|k|\le K.
\end{equation}
Observe that if $|i-j|\ge2\pi|k|$ then by \eqref{lambdaij} with $\varepsilon\le1-2\gamma$ one gets 
\[
|k\cdot\omega+\Lambda_i-\Lambda_j|\ge|k\cdot\omega+\lambda_i-\lambda_j|-\frac{\varepsilon|i-j|}{(i\wedge j)^\alpha}\ge(1-\varepsilon)|i-j|\ge\gamma(1+|i-j|).
\]
There are still no small divisors for $\omega\in\mathbb D$. Now we are going to consider the case $|i-j|\le2\pi|k|$ where the small divisors truly exist. Without loss of generality, assuming that $i\le j$, then we have, by \eqref{lambdaij}--\eqref{initialDivisor},
\stepcounter{equation}
\begin{align*}
	|k\cdot\omega+\Lambda_i-\Lambda_j|&\ge|k\cdot\omega+\lambda_i-\lambda_j|-\frac{\varepsilon|i-j|}{(i\wedge j)^\alpha}\\
	&\ge 2\gamma(1+|i-j|)-\frac{\varepsilon(1+|i-j|)}{i^\alpha}\\
	&\ge\gamma(1+|i-j|),\quad\underline{\text{provided } i\ge\left(\varepsilon/\gamma\right)^{1/\alpha}.}
	\tag{\theequation}\label{providedi}
\end{align*}
Define the set  
\[
F:=\bigcup_{\substack{i,j\in\bZ,|i-j|\le 2\pi|k|\\ k\in\bZ^n,0<|k|\le K}}\{\omega\in D:|k\cdot\omega+\Lambda_i-\Lambda_j|<\kappa(1+|i-j|)\}:=\bigcup_{\substack{i,j\in\bZ,|i-j|\le 2\pi|k|\\ k\in\bZ^n,0<|k|\le K}}F_{i,j}^k(\kappa)
\]
Since $\gamma\ge\kappa$, the last estimate \eqref{providedi} tells that $F_{i,j}^k(\kappa)=\emptyset$ when $j\ge i\ge(\varepsilon/\gamma)^{1/\alpha}$. Without loss of generality, assume that $|k_1|=\max_{i=1}^n\{|k_i|\}$. Then assumption \eqref{homoVerify} gives $|\partial_\omega\Lambda_i(\omega)|=|\partial_\omega(\Lambda_i-\lambda_i)|\le\varepsilon$, which implies 
\[
|\partial_{\omega_1}(k\cdot\omega+\Lambda_i-\Lambda_j)|\ge|k_1|-2\varepsilon\ge\frac{|k|}{2n}.
\]
Lemma \ref{measure} shows that $\meas\left(F_{i,j}^k(\kappa)\right)\le\frac{C(n)\kappa(1+|i-j|)}{|k|}$. Now we calculate
\[
\meas(F)\le\sum_{\substack{i,j\in\bZ,|i-j|\le 2\pi|k|\\ k\in\bZ^n,0<|k|\le K}}\meas\left(F_{i,j}^k(\kappa)\right)\le \sum_{\substack{i,j\in\bZ\\(i\wedge j)\le(\varepsilon/\gamma)^{1/\alpha},|i-j|\le2\pi|k|\\ k\in\bZ^n,0<|k|\le  K}}C\kappa\le C\kappa\gamma^{-2/\alpha}K^{n+1}.
\]
Letting $D'=\mathbb D\cap(D\backslash F)$, the previous estimate leads to 
\[
\meas(D\backslash D')\le\meas(\Pi\backslash\mathbb D)+\meas(F)\le C\gamma K^{n+1}+C\kappa\gamma^{-2/\alpha}K^{n+1}.
\]
Setting $\kappa=\gamma^{1+2/\alpha}$, one has 
\[
\meas(D\backslash D')\le \kappa^{\frac{\alpha}{\alpha+2}}K^{n+1}. 
\]
Combining all the above estimates on small divisors, one has for all $\omega\in D'$
\begin{equation}\label{newDivisor}
	|k\cdot\omega+\Lambda_i-\Lambda_j|\ge \kappa(1+|i-j|),\quad\forall\,i,j\in\bN \text{ and }\forall\,0\ne k\in\bZ^n,~|k|\le K.
\end{equation}
Namely,  the subset $D'=D'(\kappa,K)$ is our desired one and the above estimate concludes \eqref{homoMeasure} with $\nu_1=\frac{\alpha}{\alpha+2},\nu_2=n+1$.
Also, 
by setting \eqref{settingS} it follows  that $|\widehat{S}_i^j(k)|\le\frac{|\widehat{P}_i^j(k)|}{\kappa(1+|i-j|)}$, which, together with Lemma \ref{explanationM}, implies that
\begin{equation}\label{estimateS1}
	||\widehat{S}(k)||\le\frac{C}{\kappa}||\widehat{P}(k)||,\quad\forall~k\in\bZ^n,~|k|\le K.
\end{equation}

(2). Estimate of $||[\N,\widehat{S}(k)]||$. We have proved that for all $\omega\in D'$ equation \eqref{homoFourier} has a solution $\widehat{S}(k)\in\M$ for any $k\in\bZ^n,~|k|\le K$. At the same time, we have 
\[
[\bA,\widehat{S}(k)]=[(\bA-A),\widehat{S}(k)]+k\cdot\omega\widehat{S}(k)+\delta_{k,0}\widetilde{A}-\chi_{|k|\le K}(k)\cdot\widehat{P}(k),
\]
which implies by \eqref{homoVerify} that 
\begin{equation}\label{estimateS2}
||[\N,\widehat{S}(k)]||=\frac12
||[\bA,\widehat{S}(k)]||\le\frac{CK}{\kappa}||\widehat{P}(k)||,\quad\forall\,k\in\bZ^n,~|k|\le K.
\end{equation}

(3). Estimate of $|\Delta\widehat{S}(k)|_{\alpha+}=\max\{|\Delta \widehat{S}(k)|_{\alpha},|[\N,\Delta\widehat{S}(k)]|_\alpha\}$. Thanks to Remark \ref{remarkMatrix}\,(2) we can estimate the last two norms together. By the definition of $\widehat S$, we directly compute 
\begin{align*}
	\Delta\widehat{S}_i^j(k)&=\widehat{S}_{i+1}^{j+1}(k)-\widehat{S}_i^j(k)=\frac{-\widehat{P}_{i+1}^{j+1}(k)}{k\cdot\omega+\Lambda_{i+1}-\Lambda_{j+1}}-\frac{-\widehat{P}_i^j(k)}{k\cdot\omega+\Lambda_i-\Lambda_j}\\
	&=\frac{-\Delta\widehat{P}_i^j(k)}{k\cdot\omega+\Lambda_{i+1}-\Lambda_{j+1}}+\frac{(\Lambda_{i+1}-\Lambda_i-\Lambda_{j+1}+\Lambda_j)\widehat{P}_i^j(k)}{(k\cdot\omega+\Lambda_{i+1}-\Lambda_{j+1})(k\cdot\omega+\Lambda_i-\Lambda_j)}\\
	&=\frac{-\Delta\widehat{P}_i^j(k)}{k\cdot\omega+\Lambda_{i+1}-\Lambda_{j+1}}+\frac{\big(\Delta(A-\bA)_i^i-\Delta(A-\bA)_j^j)\big)\widehat{P}_i^j(k)}{(k\cdot\omega+\Lambda_{i+1}-\Lambda_{j+1})(k\cdot\omega+\Lambda_i-\Lambda_j)}.
\end{align*}
It follows by \eqref{homoVerify}, \eqref{newDivisor} and Lemma \ref{Algebra}\,$(d)$ that
\[
|\Delta\widehat{S}_i^j(k)|\le\frac{|\Delta\widehat{P}(k)|_\alpha}{\kappa(1+|i-j|)(i\wedge j)^\alpha}+\frac{\varepsilon||\widehat{P}(k)||}{\kappa^2(1+|i-j|)(i\wedge j)^\alpha}\le\frac{C||\widehat{P}(k)||_{\wa}}{\kappa^2(1+|i-j|)(i\wedge j)^\alpha},
\]
which implies 
\begin{equation}\label{estimateS3}
	|\Delta\widehat{S}(k)|_{\alpha+}\le\frac C{\kappa^2}||\widehat{P}(k)||_{\wa},\quad\forall\,k\in\bZ^n,~|k|\le K.
\end{equation}
Collecting estimates \eqref{estimateS1}--\eqref{estimateS3}, we obtain 
\begin{equation}\label{estimateSk}
||\widehat{S}(k)||_{\wa+}\le\frac{CK}{\kappa^2}||\widehat{P}(k)||_{\wa},\quad\forall\,k\in\bZ^n,~|k|\le K,
\end{equation}
from which it follows by Cauchy's estimate that 
\begin{equation}\label{estimateStheta}
||S(\theta)||_{\wa+}\le\frac{CK}{\kappa^2(\rho-\rho')^n}\sup_{|\Im\theta|<\rho}||P(\theta)||_{\wa},\quad\forall\,|\Im\theta|<\rho'.
\end{equation}
By setting \eqref{settingR}, the homological equation \eqref{homoFourier} reads 
\begin{equation}\label{homoFourierK}
\mL\widehat{S}(\omega,k)=\delta_{k,0}\widetilde{A}(\omega)-\widehat{P}(\omega,k)+\chi_{|k|>K}(k)\cdot\widehat{P}(\omega,k).
\end{equation}
Differentiating equation \eqref{homoFourierK} gives
\begin{equation}\label{homoFourierK.om}
\mL\partial_\omega\widehat{S}(\omega,k)=\delta_{k,0}\partial_\omega\widetilde{A}(\omega)-\Big(\partial_\omega\widehat{P}(\omega,k)+(\partial_\omega\mL)\widehat{S}(\omega,k)\Big)+\chi_{|k|>K}(k)\cdot\partial_\omega\widehat{P}(\omega,k).
\end{equation}
Let $Q(\omega,k)=\partial_\omega\widehat{P}(\omega,k)+(\partial_\omega\mL)\widehat{S}(\omega,k)$, then \eqref{settingS} implies $\chi_{|k|>K}(k)\cdot Q(\omega,k)=\chi_{|k|>K}(k)\cdot\partial_\omega\widehat{P}(\omega,k)$. It follows that \eqref{homoFourierK.om} reads 
\begin{equation}\label{homoFourierKp}
	\mL\partial_\omega\widehat{S}(\omega,k)=\delta_{k,0}\partial_\omega\widetilde{A}(\omega)-Q(\omega,k)+\chi_{|k|>K}(k)\cdot Q(\omega,k),
\end{equation}
which is formally the same as equation \eqref{homoFourierK} with $\widehat{P}(\omega,k)$ replaced by $Q(\omega,k)$. We can now solve equation \eqref{homoFourierKp} by defining 
\[
\partial_\omega\widehat{S}(\omega,k)=\chi_{|k|\le K}(k)\cdot\mL^{-1}_{\omega,k}
\Big(\delta_{k,0}\partial_\omega\widetilde{A}(\omega)-Q(\omega,k)+\chi_{|k|>K}(k)\cdot Q(\omega,k)\Big).
\]
Recalling assumption \eqref{homoVerify} and estimate \eqref{estimateSk}, we have for all $k\in\bZ^n,~|k|\le K$
\[
||Q(\omega,k)||_{\wa}\le||\partial_\omega\widehat{P}(\omega,k)||_{\wa}+CK||\widehat{S}(\omega,k)||_{\wa+}\le\frac{CK^2}{\kappa^2}
\max\{||\widehat{P}(\omega,k)||_{\wa},||\partial_\omega\widehat{P}(\omega,k)||_{\wa}\}.
\]
Similarly to \eqref{estimateSk}, 
repeating almost the same procedures as before gives for all $k\in\bZ^n,~|k|\le K$
\begin{align*}
||\partial_\omega\widehat{S}(\omega,k)||_{\wa+}\le\frac{CK}{\kappa^2}||Q(\omega,k)||_{\wa}\le\frac{CK^3}{\kappa^4}
\max\{||\widehat{P}(\omega,k)||_{\wa},||\partial_\omega\widehat{P}(\omega,k)||_{\wa}\},
\end{align*}
which implies by Cauchy's estimate that
\[
||\partial_\omega S(\omega,\theta)||_{\wa+}\le\frac{CK^3}{\kappa^4(\rho-\rho')^n}\sup_{|\Im\theta|<\rho}
\max\{||P(\omega,\theta)||_{\wa},||\partial_\omega P(\omega,\theta)||_{\wa}\},
\quad\forall\,|\Im\theta|<\rho'.
\]
Together with \eqref{estimateStheta} we get the expected estimate \eqref{homoS}. The proof is now complete.

\end{proof}

\subsection{KAM iteration}
In this section we will perform our KAM scheme as follows. Let us denote by $m\ge0$ the index of the current step of the KAM iteration and initialize our non-autonomous system (i.e. $m=0$) 
\begin{equation}\label{initialStep}
\mi\dot u=(A_0+P_0(\omega t))u,
\end{equation}
where $A_0=\bA=diag\{\lambda_i\}_{i\ge1}$ with $\lambda_i=2i-1$ and $P_0=\epsilon\bP\in\M_{\wa}(\Pi,\sigma)$. 
Assuming that the previous $m$ steps have already been done, at the $m^\mth$ step one gets an equation of the form
\begin{equation}\label{mStep}
	\mi\dot u=(A_m+P_m)u,
\end{equation}
where $A_m=diag\{\lambda_i^{(m)}\}_{i\ge1}$ and $P_m\in\M_{\wa}(\Pi_m,\sigma_m)$. We are now going to build a change of variable $u=\Exp{S_{m+1}}v$ defined on a subset $\Pi_{m+1}\times\bT^n_{\sigma_{m+1}}\subset\Pi_m\times\bT^n_{\sigma_m}$, 
transforming equation \eqref{mStep} into a new equation at the $(m+1)^\mth$ step of the form
\begin{equation}\label{m+Step}
\mi\dot v=(A_{m+1}+P_{m+1})v,
\end{equation}
where $A_{m+1}=diag\{\lambda_i^{(m+1)}\}_{i\ge1}$ and $P_{m+1}\in\M_{\wa}(\Pi_{m+1},\sigma_{m+1})$.
More precisely, we first use Proposition \ref{homoSolution} to construct $S_{m+1}$ by solving the homological equation
\begin{equation}\label{KAMhomo}
[A_m,S_{m+1}]-\mi\dot S_{m+1}=\widetilde{A}_m-P_m+R_m,\quad(\omega,\theta)\in\Pi_{m+1}\times\bT^n_{\sigma_{m+1}},
\end{equation}
where $\widetilde{A}_m(\omega)$ and $R_m(\omega,\theta)$ are defined respectively on $\Pi_{m+1}$ and $\Pi_{m+1}\times\bT^n_{\sigma_{m+1}}$ by
\begin{align}
\widetilde{A}_m(\omega)&=\left(\delta_{i,j}\left(\widehat{P}(\omega,0)\right)_i^j\right)_{i,j\ge1},\label{KAMa}\\
R_m(\omega,\theta)&=\sum_{|k|>K_{m+1}}\widehat{P}(\omega,k)\Exp{\mi k\cdot\theta}.\label{KAMR}
\end{align}
Then by the coordinate transformation $u=\Exp{S_{m+1}}v$ we get the new equation \eqref{m+Step}, where $A_{m+1}$ and $P_{m+1}$ are also defined respectively on $\Pi_{m+1}$ and $\Pi_{m+1}\times\bT^n_{\sigma_{m+1}}$ by
\begin{align}
A_{m+1}&=A_m+\widetilde{A}_m,\label{KAMA}
\\
P_{m+1}&=R_m+\int_0^1\Exp{-\tau S_{m+1}}[(1-\tau)(\widetilde{A}_m+R_m)+\tau P_m,S_{m+1}]\Exp{\tau S_{m+1}}\md\tau.\label{KAMP}
\end{align}
Observe that by construction if $A_m$ and $P_m$ are both Hermitian, then so are $\widetilde{A}_m,R_m$ and $A_{m+1}$. It follows from the solution to the homological equation \eqref{KAMhomo} that $S_{m+1}$ is anti-Hermitian, which implies that $P_{m+1}$ is again Hermitian. As can be easily seen, this structure is preserved along the KAM iteration. By iterating the procedure \eqref{mStep}-\eqref{KAMP} we build a change of variable $u=\U_{m+1}v:=\Exp{S_1}\circ \Exp{S_2}\circ\dots\circ \Exp{S_{m+1}}v$ transforming the initial equation \eqref{initialStep} into the new equation \eqref{m+Step} at the $(m+1)^\mth$ step. Before setting the parameters of the iteration, note that $\lambda_i^{(0)}=\lambda_i$ for $i\in\bN$,  $\Pi_0=\Pi$, $\sigma_0=\sigma$ and $S_0=\zero,\U_0=\id$. Let $||P_0||_{\wa}^{\Pi_0,\sigma_0}\le\epsilon_0$, with $\epsilon_0>0$ small enough. Then choose, for $m\ge0$,
\begin{gather*}
\epsilon_{m+1}=\epsilon_{m}^{4/3},\quad\kappa_{m+1}=\epsilon_{m}^{1/16},\\
\sigma_{m}-\sigma_{m+1}=\frac{\sigma_0{(m+1)}^{-2}}{2\sum_{i\ge1}i^{-2}},\\
K_{m+1}=\frac{2\ln\epsilon_{m}^{-1}}{\sigma_{m}-\sigma_{m+1}}.
\end{gather*}

\begin{lemma}[KAM Iteration]\label{KAMiteration}
Let $\alpha\in(0,1],\nu_1=\frac{\alpha}{\alpha+2}$ and $m\ge0$. There exists $\epsilon_*=\epsilon_*(\sigma,n,\alpha)\ll1$ such that for all $\epsilon\in(0,\epsilon_*)$ there are $\Pi_{m+1}\subset\Pi_{m},\sigma_{m+1}<\sigma_{m}$ and $S_{m+1}\in\M_{\wa+}(\Pi_{m+1},\sigma_{m+1}),P_{m+1}\in\M_{\wa}(\Pi_{m+1},\sigma_{m+1})$ such that the change of variable $u=\Exp{S_{m+1}(\omega,\theta)}v$, defined on $\Pi_{m+1}\times\bT^n_{\sigma_{m+1}}$ and acting from $\ell^2_p$ into itself, is a unitary (on $\ell^2$) isomorphism, which conjugates the system $\mi\dot u=(A_{m}+P_{m})u$ at the $m^\mth$ step to the system $\mi\dot v=(A_{m+1}+P_{m+1})v$ at the $(m+1)^\mth$ step. Moreover, the following estimates hold:
\begin{gather}
	\meas(\Pi_{m}\backslash\Pi_{m+1})\le\epsilon_{m}^{\frac{\nu_1}{17}},\label{mKAMM}\\
	||\widetilde{A}_{m}||_{\wa}^{\Pi_{m+1}}\le\epsilon_{m},\label{mKAMA}\\
	||P_{m+1}||_{\wa}^{\Pi_{m+1},\sigma_{m+1}}\le\epsilon_{m+1},\label{mKAMP}\\
		||S_{m+1}||_{\wa+}^{\Pi_{m+1},\sigma_{m+1}}\le\epsilon_{m}^{2/3},\notag\\
	||\U_{m+1}(\omega,\theta)-\id||_{\B(\ell^2_p)}\le\sum_{l=0}^m2\epsilon_{l}^{2/3},\quad\forall\,(\omega,\theta)\in\Pi_{m+1}\times\bT^n_{\sigma_{m+1}},\label{mKAMU}
\end{gather}
where $p\in[-2,2]$ and  $\U_{m+1}=\Exp{S_1}\circ\Exp{S_2}\circ\dots\circ\Exp{S_{m+1}}$(in particular $\U_0=\id$).
\end{lemma}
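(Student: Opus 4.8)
The proof is by induction on $m$, and for each $m$ the core of the argument is a single application of Proposition \ref{homoSolution} together with the structure lemma (Lemma \ref{Algebra}) to estimate the new perturbation built by \eqref{KAMP}. The base case $m=0$ amounts to checking that $P_0 = \epsilon\bP$ satisfies the hypotheses of Proposition \ref{homoSolution} with $A_0 = \bA$ (so $\|A_0-\bA\|_{\wa} = 0 \le \varepsilon$ trivially) and that $\epsilon_0 := \|P_0\|_{\wa}^{\Pi_0,\sigma_0} \le \epsilon\,\|\bP\|_{\wa}^{\Pi,\sigma}$ is small for $\epsilon < \epsilon_*$; the verification hypotheses for the inductive step are exactly estimates \eqref{mKAMA}--\eqref{mKAMP} at the previous level. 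So suppose the statement holds up to step $m$; I want to produce step $m+1$.

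\textbf{The inductive step.} First I would apply Proposition \ref{homoSolution} to $A=A_m$, $P=P_m$, with the parameter choices $\varepsilon \rightsquigarrow \sum_{l<m}\epsilon_l$ (the accumulated drift of $A_m$ from $\bA$, which is $O(\epsilon_0)$ and hence $<1-2\gamma$), $\kappa = \kappa_{m+1} = \epsilon_m^{1/16}$, $K = K_{m+1}$, $\rho = \sigma_m$, $\rho' = \sigma_{m+1}$. This produces $\Pi_{m+1} = D'(\kappa_{m+1},K_{m+1}) \subset \Pi_m$, the diagonal correction $\widetilde A_m$, the remainder $R_m$, and the generator $S_{m+1}$, solving \eqref{KAMhomo}. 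The measure estimate \eqref{mKAMM} follows from \eqref{homoMeasure}: $\meas(\Pi_m\setminus\Pi_{m+1}) \le C\kappa_{m+1}^{\nu_1}K_{m+1}^{\nu_2}$, and with $\kappa_{m+1}=\epsilon_m^{1/16}$ and $K_{m+1}$ logarithmic in $\epsilon_m^{-1}$ one absorbs the polynomial-in-$K$ and the constant $C$ into a loss from $\nu_1/16$ down to $\nu_1/17$, for $\epsilon_*$ small. Estimate \eqref{mKAMA} is immediate from \eqref{homoA} and the inductive bound $\|P_m\|_{\wa}^{\Pi_m,\sigma_m}\le\epsilon_m$. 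For $S_{m+1}$, estimate \eqref{homoS} gives $\|S_{m+1}\|_{\wa+}^{\Pi_{m+1},\sigma_{m+1}} \le C K_{m+1}^3\kappa_{m+1}^{-4}(\sigma_m-\sigma_{m+1})^{-n}\,\epsilon_m$; since $\kappa_{m+1}^{-4}=\epsilon_m^{-1/4}$, the factor $K_{m+1}^3(\sigma_m-\sigma_{m+1})^{-n}$ is polylogarithmic-times-polynomial in $m$ and $\ln\epsilon_m^{-1}$, and $\epsilon_m^{1-1/4}=\epsilon_m^{3/4}$ beats all of that, leaving $\|S_{m+1}\|_{\wa+}\le\epsilon_m^{2/3}$ for $\epsilon_*$ small (the gap between $3/4$ and $2/3$ is the slack that swallows the logs). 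Because $P_m$ is Hermitian and $A_m$ is diagonal real, $S_{m+1}$ is anti-Hermitian (as noted in the setup around \eqref{settingS}), so $\Exp{S_{m+1}}$ is unitary on $\ell^2$; and by Lemma \ref{Algebra}(c) applied to $S_{m+1}\in\M_+$, it is a bounded isomorphism on $\ell^2_p$ for $p\in[-2,2]$ with $\|\Exp{\pm S_{m+1}}-\id\|_{\B(\ell^2_p)}\le C\|S_{m+1}\|_+ \le C\epsilon_m^{2/3}$ for small $\epsilon$; composing, \eqref{mKAMU} follows telescopically with $\U_{m+1}=\U_m\circ\Exp{S_{m+1}}$.

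\textbf{The main obstacle: estimating $P_{m+1}$.} The heart is \eqref{mKAMP}. From \eqref{KAMP}, $P_{m+1} = R_m + \int_0^1 \Exp{-\tau S_{m+1}}\big[(1-\tau)(\widetilde A_m+R_m)+\tau P_m,\,S_{m+1}\big]\Exp{\tau S_{m+1}}\,\md\tau$. For the $R_m$ term I use \eqref{homoR}: $\|R_m\|_{\wa}^{\Pi_{m+1},\sigma_{m+1}} \le C(\sigma_m-\sigma_{m+1})^{-n}\Exp{-K_{m+1}(\sigma_m-\sigma_{m+1})/2}\,\epsilon_m$, and the choice $K_{m+1}=2\ln\epsilon_m^{-1}/(\sigma_m-\sigma_{m+1})$ makes $\Exp{-K_{m+1}(\sigma_m-\sigma_{m+1})/2}=\epsilon_m$, giving $\|R_m\|_{\wa}\le C(\sigma_m-\sigma_{m+1})^{-n}\epsilon_m^2 \ll \tfrac12\epsilon_{m+1}=\tfrac12\epsilon_m^{4/3}$ (again the polynomial-in-$m$ factor is crushed by $\epsilon_m^{2-4/3}=\epsilon_m^{2/3}$). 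For the integral term I need a Neumann-type bound on $\Exp{-\tau S_{m+1}}$ as an operator \emph{in the $\M_{\wa}$ norm}, and here is where Lemma \ref{Algebra} is indispensable: by parts (a)--(b), $\M_{\wa+}$ is a Banach algebra acting boundedly on $\M_{\wa}$, so $\|\Exp{\pm\tau S_{m+1}}\|_{\wa+} \le \Exp{C\|S_{m+1}\|_{\wa+}} \le \Exp{C\epsilon_m^{2/3}} \le 2$, and each commutator $[\cdot,S_{m+1}]$ costs a factor $C\|S_{m+1}\|_{\wa+}\|\cdot\|_{\wa}$. Thus the integral term is bounded in $\|\cdot\|_{\wa}$ by $C\|S_{m+1}\|_{\wa+}\big(\|\widetilde A_m\|_{\wa}+\|R_m\|_{\wa}+\|P_m\|_{\wa}\big) \le C\epsilon_m^{2/3}\cdot(\epsilon_m+\epsilon_m^2+\epsilon_m) \le C\epsilon_m^{5/3} \ll \tfrac12\epsilon_m^{4/3}$, once more using that $\epsilon_m^{5/3-4/3}=\epsilon_m^{1/3}$ absorbs $C$. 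Summing, $\|P_{m+1}\|_{\wa}^{\Pi_{m+1},\sigma_{m+1}}\le\epsilon_{m+1}$, which closes the induction. The one genuinely delicate point throughout is bookkeeping the $\omega$-derivatives: every $\M_{\wa}$ or $\M_{\wa+}$ norm above is the $C^1$-in-$\omega$ norm $\sup_{l=0,1}\|\partial_\omega^l\cdot\|$, and differentiating \eqref{KAMP} produces extra terms involving $\partial_\omega S_{m+1}$, controlled by the second half of \eqref{homoS} (the $\kappa^{-4}K^3$ bound), which is exactly why the Proposition's $S$-estimate was stated with that worse power — it still loses only $\epsilon_m^{3/4}$, compatible with the target $\epsilon_m^{2/3}$.
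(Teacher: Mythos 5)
Your proposal is correct and follows essentially the same route as the paper's proof: induction over $m$, with each step applying Proposition \ref{homoSolution} using the same parameter choices $\kappa_{m+1}=\epsilon_m^{1/16}$, $K_{m+1}=2(\ln\epsilon_m^{-1})/(\sigma_m-\sigma_{m+1})$, the measure loss absorbed into $\nu_1/17$, and Lemma \ref{Algebra} to propagate the $\M_{\wa}$ / $\M_{\wa+}$ structure through the commutator and conjugation in \eqref{KAMP}. The only presentational difference is that you make the exponent bookkeeping (why $3/4$ versus $2/3$, why $5/3$ versus $4/3$) and the $\omega$-derivative tracking more explicit than the paper does, which the paper instead carries silently in the definition of the parametrized norms.
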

\begin{proof}
We are going to proceed by introduction using Proposition \ref{homoSolution}. Initially we have $A_0=\bA$, which verifies \eqref{homoVerify}, then by Proposition \ref{homoSolution} one constructs $\Pi_1,\sigma_1$ and $\widetilde{A}_0,R_0,S_1$ such that the following homological equation holds on $\Pi_1\times\bT^n_{\sigma_1}$
\[
[A_0,S_1]-\mi\dot S_1=\widetilde{A}_0-P_0+R_0.
\]
Recalling measure estimate \eqref{homoMeasure}, we get
\[
\meas(\Pi_0\backslash\Pi_1)\le C\kappa_1^{\nu_1}K_1^{\nu_2}\le C(\sigma,n)\epsilon_0^{\frac{\nu_1}{16}}\left(\ln\epsilon_0^{-1}\right)^{\nu_2}\le\epsilon_0^{\frac{\nu_1}{17}}.
\]
 Due to \eqref{homoS}  one has 
\[
||S_1||_{\wa+}^{\Pi_1,\sigma_1}\le\frac{CK_1^3}{\kappa_1^4(\sigma_0-\sigma_1)^n}||P_0||_{\wa}^{\Pi_0,\sigma_0}\le C(\sigma,n)\epsilon_0^{3/4}\left(\ln\epsilon_0^{-1}\right)^3\le\epsilon_0^{2/3}.
\]
It follows by Lemma \ref{Algebra} that for all $(\omega,\theta)\in\Pi_1\times\bT^n_{\sigma_1}$ 
\begin{align*}
	&||\U_1(\omega,\theta)-\id||_{\B(\ell_p^2)}=||\Exp{S_1(\omega,\theta)}-\id||_{\B(\ell_p^2)}\\
	\le& C\Exp{C||S_1||_{\wa+}^{\Pi_1,\sigma_1}}||S_1||_{\wa+}^{\Pi_1,\sigma_1}\le2\epsilon_0^{2/3},\qquad\forall\,p\in[-2,2].
\end{align*}
Collecting all the estimates \eqref{homoA}-\eqref{homoS} we get
$||\widetilde{A}_0||_{\wa}^{\Pi_1}\le\epsilon_0$ and 
\[
||R_0||_{\wa}^{\Pi_1,\sigma_1}\le\frac{C\Exp{-\frac{K_1}2(\sigma_0-\sigma_1)}}{(\sigma_0-\sigma_1)^n}||P_0||_{\wa}^{\Pi_0,\sigma_0}\le C(\sigma,n)\epsilon_0^2\le\frac12\epsilon_0^{4/3}.
\]  
In addition, Lemma \ref{Algebra} gives that for all $\tau\in[0,1]$
\[
||[(1-\tau)(\widetilde{A}_0+R_0)+\tau P_0,S_1]||_{\wa}^{\Pi_1,\sigma_1}\le C||P_0||_{\wa}^{\Pi_0,\sigma_0}||S_1||_{\wa+}^{\Pi_1,\sigma_1}\le C\epsilon_0^{5/3},
\]
which implies that 
\[
\left\|\int_0^1\Exp{-\tau S_1}[(1-\tau)(\widetilde{A}_0+R_0)+\tau P_0,S_1]\Exp{\tau S_1}\md\tau\right\|_{\wa}^{\Pi_1,\sigma_1}\le\frac12\epsilon_0^{4/3}.
\]
Together with the estimate on $R_0$ above, definition \eqref{KAMP} shows  $||P_1||_{\wa}^{\Pi_1,\sigma_1}\le\epsilon_0^{4/3}=\epsilon_1$.

Then assuming the precedent $m$ steps have already done, we are going from the $m^\mth$ step to the $(m+1)^\mth$ step. Clearly, by \eqref{mKAMA} it follows that 
\[
||A_{m}-\bA||_{\wa}^{\Pi_{m}}=\left\|\sum_{l=0}^{m-1}\widetilde{A}_l\right\|_{\wa}^{\Pi_{m}}\le\sum_{l=0}^{m-1}||\widetilde{A}_l||_{\wa}^{\Pi_{l+1}}\le\sum_{l=0}^{m-1}\epsilon_l\le 2\epsilon_0,
\]
so that assumption \eqref{homoVerify} is verified provided that $2\epsilon_0\leq\varepsilon$, where $\varepsilon$ is the small parameter appearing in Proposition \ref{homoSolution}. Similarly as before, we apply the Proposition \ref{homoSolution} again to construct $\Pi_{m+1},\sigma_{m+1}$ and $\widetilde{A}_{m},R_m,S_{m+1}$ such that the following homological equation holds on $\Pi_{m+1}\times\bT^n_{\sigma_{m+1}}$:
\[
[A_{m},S_{m+1}]-\mi\dot S_{m+1}=\widetilde{A}_m-P_{m}+R_m.
\]
Recalling measure estimate \eqref{homoMeasure}, we conclude that 
\[
\meas(\Pi_{m}\backslash\Pi_{m+1})\le C\kappa_{m+1}^{\nu_1}K_{m+1}^{\nu_2}\le C(\sigma,n)\epsilon_{m}^{\frac{\nu_1}{16}}\left(m^2\ln\epsilon_{m}^{-1}\right)^{\nu_2}\le\epsilon_{m}^{\frac{\nu_1}{17}}.
\]
Due to \eqref{homoS} we get 
\[
||S_{m+1}||_{\wa+}^{\Pi_{m+1},\sigma_{m+1}}\le\frac{CK_{m+1}^3}{\kappa_{m+1}^4(\sigma_{m}-\sigma_{m+1})^n}||P_m||_{\wa}^{\Pi_{m},\sigma_{m}}\le C(\sigma,n)\epsilon_{m}^{3/4}m^{2(n+3)}\left(\ln\epsilon_{m}^{-1}\right)^3\le\epsilon_{m}^{2/3}.
\]
Furthermore, by Lemma \ref{Algebra} and estimate \eqref{mKAMU}, one gets for all $(\omega,\theta)\in\Pi_{m+1}\times\bT^n_{m+1}$
\begin{gather}
||\Exp{S_{m+1}(\omega,\theta)}-\id||_{\B(\ell^2_p)}\le C\Exp{C||S_{m+1}||_{\wa+}^{\Pi_{m+1},\sigma_{m+1}}}||S_{m+1}||_{\wa+}^{\Pi_{m+1},\sigma_{m+1}}\le\epsilon_{m}^{2/3},\label{mKAMeS}\\
||\U_m(\omega,\theta )-\id||_{\B(\ell_p^2)}\le\sum_{l=0}^{m-1}2\epsilon_{l}^{2/3}\le 4\epsilon_0.\notag
\end{gather}
Since, by definition,
\(
\U_{m+1}-\id=\U_m\circ\Exp{S_{m+1}}-\id=\U_m\circ(\Exp{S_{m+1}}-\id)+\U_m-\id
\), then we have 
\[
||\U_{m+1}-\id||_{\B(\ell^ 2_p)}\le ||\U_m||_{\B(\ell_p^ 2)}\cdot||\Exp{S_{m+1}}-\id||_{\B(\ell_p^ 2)}+||\U_m-\id||_{\B(\ell_p^ 2)}\le\sum_{l=0}^{m}2\epsilon_{l}^{2/3}.
\]
Combining estimates \eqref{homoA}-\eqref{homoS} we conclude that $||\widetilde{A}_{m}||_{\wa}^{\Pi_{m+1}}\le\epsilon_{m}$ and 
\begin{equation}\label{estimate.Rm}
||R_m||_{\wa}^{\Pi_{m+1},\sigma_{m+1}}\le\frac{C\Exp{-\frac{K_{m+1}}{2}(\sigma_{m}-\sigma_{m+1})}}{(\sigma_{m}-\sigma_{m+1})^n}||P_m||_{\wa}^{\Pi_{m},\sigma_{m}}\le C(\sigma,n)\epsilon_{m}^2m^{2n}\le\frac12\epsilon_{m}^{4/3}.
\end{equation}
Finally, Lemma \ref{Algebra} shows that  for all $\tau\in[0,1]$
\[
||[(1-\tau)(\widetilde{A}_m+R_m)+\tau P_m,S_{m+1}]||_{\wa}^{\Pi_{m+1},\sigma_{m+1}}\le C||P_m||_{\wa}^{\Pi_{m},\sigma_{m}}\cdot||S_{m+1}||_{\wa+}^{\Pi_{m+1},\sigma_{m+1}}\le C\epsilon_{m}^{5/3},
\]
from which it follows that 
\[
\left\|\int_0^1\Exp{-\tau S_{m+1}}[(1-\tau)(\widetilde{A}_m+R_m)+\tau P_m,S_{m+1}]\Exp{\tau S_{m+1}}\md\tau\right\|_{\wa}^{\Pi_{m+1},\sigma_{m+1}}\le\frac12\epsilon_{m}^{4/3}.
\]
By \eqref{estimate.Rm} and \eqref{KAMP} we have  $||P_{m+1}||_{\wa}^{\Pi_{m+1},\sigma_{m+1}}\le\epsilon_{m}^{4/3}=\epsilon_{m+1}$, which completes the proof.
\end{proof}

\subsection{Proof of Theorem \ref{reducibilityThm}}
We are now going to prove the reducibility theorem using the KAM Iteration Lemma \ref{KAMiteration} above. Defining $\Pi_\epsilon=\bigcap_{m\ge0}\Pi_{m}$, the measure estimates \eqref{mKAMM} give 
\[
\meas(\Pi\backslash\Pi_\epsilon)\le\sum_{m\ge0}\meas(\Pi_{m}\backslash\Pi_{m+1})\le\sum_{m\ge0}\epsilon_{m}^{\frac{\nu_1}{17}}\le2\epsilon_0^\frac{\nu_1}{17},\quad\nu_1=\frac{\alpha}{\alpha+2}.
\]
By construction, we have $\sigma_\infty:=\sigma-\sum_{m\ge0}(\sigma_{m}-\sigma_{m+1})=\sigma/2$. In the following, let $(\omega,\theta)\in\Pi_\epsilon\times\bT^n_{\sigma/2}$ and $p\in[-2,2]$. Clearly, by \eqref{mKAMP}, $P_m$ goes to 0 as $m$ goes to infinity. Writing $A^\infty=diag\{\lambda_i^\infty\}_{i\in\bN}$ with $\lambda_i^\infty=\lim_{m\to\infty}\lambda_i^{(m)}$ for $i\in\bN$, in view of \eqref{mKAMA}, we get 
\[
||A^\infty-A_0||_{\wa}^{\Pi_\epsilon}\le\sum_{m\ge0}||\widetilde{A}_m||_{\wa}^{\Pi_{m+1}}\le\sum_{m\ge0}\epsilon_{m}\le2\epsilon_0,
\]
which implies that $|\lambda_i^\infty-\lambda_i|\le2\epsilon_0$ for all $i\in\bN$. We have now proved the first two estimates of \eqref{reducibilityEstimate}. Then we are going to estimate the coordinate transformation $\U_{\omega,\epsilon}(\theta)$ that appears in Theorem \ref{reducibilityThm}. Since the definition shows that $\U_{m+1}-\U_m=\U_m\circ(\Exp{S_{m+1}}-\id)$, we get from estimates \eqref{mKAMU} and \eqref{mKAMeS} that 
\[
||\U_{m+1}-\U_m||_{\B(\ell_p^2)}\le||\U_m||_{\B(\ell_p^2)}\cdot||\Exp{S_{m+1}}-\id||_{\B(\ell_p^2)}\le2\epsilon_{m}^{2/3}.
\]
It follows that for any $m_2>m_1\ge0$
\[
||\U_{m_2}-\U_{m_1}||_{\B(\ell_p^2)}\le\sum_{l=m_1}^{m_2-1}||\U_{l+1}-\U_l||_{\B(\ell_p^2)}\le\sum_{l\ge m_1}2\epsilon_{l}^{2/3}\le4\epsilon_{m_1}^{2/3}\to0,\text{ as }m_1\to\infty,
\]
which implies that $\{\U_m(\omega,\theta)\}_{m\ge0}$ is a Cauchy sequence (uniformly in $\omega$ and $\theta$) in $\B(\ell_p^2)$. Denoting by $\U_{\omega,\epsilon}(\theta)$ its limiting map, then \eqref{mKAMU} gives the last estimate of \eqref{reducibilityEstimate} in Theorem \ref{reducibilityThm} and the uniform convergence implies the $C^1$ regularity (in $\omega$) and analyticity (in $\theta$).

Let us spend some more words on why we get the reducibility in $\ell^2$. As shown in Lemma \ref{KAMiteration}, for all $m\ge0$  the change of variable $\ell^2\ni u=\U_m v\in\ell^2$ conjugates the initial equation \eqref{initialStep} to the equation $\mi\dot v=(A_{m}+P_{m})v$ at the $m^\mth$ step, where the identity $A_m+P_m\equiv\U_m^{-1}(\bA+\epsilon\bP)\U_m-\mi\,\U_m^{-1}\partial_t\,\U_m$ holds not only formally but also truly in $\B(\ell^2,\ell_{{-2}}^2)$ by Lemma \ref{Algebra}\,$(c)$. The proof is now complete. \qed

\appendix

\section{Proof of Lemma \ref{Algebra}}\label{proofAlgebra}
\begin{proof}
	
$(a)$. Recall that $A\in\M_{\wa+},B\in\M_{\wa}$ and
\begin{gather*}
	||A||_{\wa+}=\max\{||A||,||[\N,A]||,|\Delta A|_\alpha,|[\N,\Delta A]|_\alpha\},\\
	||B||_{\wa}=\max\{||B||,|\Delta B|_\alpha\}.
\end{gather*}
Then we need to estimate $||AB||_{\wa}=\max\{||AB||,|\Delta (AB)|_\alpha\}$.
First we have $||AB||\le||A||\cdot||B||$. Now we compute for any $i,j\ge1$

\begin{align}
	\Delta(AB)_i^j&=(AB)_{i+1}^{j+1}-(AB)_i^j=\sum_{k\ge1}A_{i+1}^kB_k^{j+1}-\sum_{k\ge1}A_i^kB_k^j\notag\\
	&=A_{i+1}^1B_1^{j+1}+\sum_{k\ge1}A_{i+1}^{k+1}B_{k+1}^{j+1}-\sum_{k\ge1}A_i^kB_k^j\notag\\
	&=A_{i+1}^1B_1^{j+1}+\sum_{k\ge1}\Delta A_i^k\cdot B_{k+1}^{j+1}+\sum_{k\ge1}A_i^k\cdot\Delta B_k^j.
	\label{differenceProdoct}
\end{align} 
It follows that 
\[
|\Delta (AB)_i^j|\le|A_{i+1}^1|\cdot|B_1^{j+1}|+\sum_{k\ge1}|\Delta A_i^k|\cdot |B_{k+1}^{j+1}|+\sum_{k\ge1}|A_i^k|\cdot|\Delta B_k^j|:=\Delta_1+\Delta_2+\Delta_3.
\]
Then we estimate the above summands one by one. Since $\alpha\in[0,1]$ we have 
\[
\Delta_1\le \frac{||[\N,A]||}{i}\cdot ||B||\le\frac{||[\N,A]||\cdot||B||}{(i\wedge j)^\alpha}.
\]
Plus, we get 
\begin{align*}
\Delta_2&\le\sum_{k\ge1}\frac{|\Delta A|_\alpha+|[\N,\Delta A]|_\alpha}{(i\wedge k)^\alpha(1+|i-k|)}\cdot|B_{k+1}^{j+1}|=
\left(|\Delta A|_\alpha+|[\N,\Delta A]|_\alpha\right)\sum_{k\ge1}\frac{|B_{k+1}^{j+1}|}{(i\wedge k)^\alpha(1+|i-k|)}\\
&\le \left(|\Delta A|_\alpha+|[\N,\Delta A]|_\alpha\right)\left(\sum_{k\ge i/2}+\sum_{k\le i/2}\right)\frac{|B_{k+1}^{j+1}|}{(i\wedge k)^\alpha(1+|i-k|)}\\
&\le \left(|\Delta A|_\alpha+|[\N,\Delta A]|_\alpha\right)\left(\sum_{k\ge i/2}\frac{2|B_{k+1}^{j+1}|}{i^\alpha(1+|i-k|)}+\sum_{k\le i/2}\frac{2|B_{k+1}^{j+1}|}{i^\alpha k}\right)\\
&\le\frac{2\left(|\Delta A|_\alpha+|[\N,\Delta A]|_\alpha\right)}{i^\alpha}
\left(\sum_{k\ge1}\frac{|B_{k+1}^{j+1}|}{1+|i-k|}+\sum_{k\ge1}\frac{|B_{k+1}^{j+1}|}{k}\right)\\
&\le\frac{C}{(i\wedge j)^\alpha}\left(|\Delta A|_\alpha+|[\N,\Delta A]|_\alpha\right)||B||, \qquad\text{\underline{by Hölder's inequality and Lemma \ref{explanationMplus}.}}
\end{align*}

Now turn to the last one $\Delta_3$. Similarly, using the same notaion as in Lemma \ref{explanationMplus} we obtain 
\begin{align*}
\Delta_3&\le \sum_{k\ge1}\frac{\overline{A}_i^k}{1+|i-k|}\cdot\frac{|\Delta B|_\alpha}{(k\wedge j)^\alpha}\le |\Delta B|_\alpha\left(\sum_{k\ge j}+\sum_{k\le j}\right)\frac{\overline{A}_i^k}{(1+|i-k|)(k\wedge j)^\alpha}\\
&\le \frac{|\Delta B|_\alpha}{j^\alpha}\sum_{k\ge1}\frac{\overline{A}_i^k}{1+|i-k|}+|\Delta B|_\alpha \left(\sum_{k\ge i/2}+\sum_{k\le i/2}\right)\frac{\overline{A}_i^k}{(1+|i-k|)k^\alpha}\\
&\le \frac{C}{(i\wedge j)^\alpha}|\Delta B|_\alpha\left(||A||+||[\N,A]||\right), \qquad\text{\underline{by Hölder's inequality and Lemma \ref{explanationMplus}.}}
\end{align*}
Here the way to estimate the second series in the second line is the same as the one used for $\Delta_2$ above, so we omit it now.
All the above estimations on $\Delta_1,\Delta_2,\Delta_3$ shows that $|\Delta (AB)|_\alpha\le C||A||_{\wa+}\cdot||B||_{\wa}$, which follows $||AB||_{\widehat{\alpha}}\le C||A||_{\widehat{\alpha}+}\cdot||B||_{\widehat{\alpha}}$ due to the previous estimates on operator norms.

Repeating almost the same procedures, we prove that $BA\in\M_{\widehat{\alpha}}$ with $||BA||_{\widehat{\alpha}}\le C||A||_{\widehat{\alpha}+}\cdot||B||_{\widehat{\alpha}}$. This completes the proof of item $(a)$.

$(b)$. We have $A,B\in\M_{\wa+}$, then we are going to estimate 
\[
||AB||_{\widehat{\alpha}+}=\max\{||AB||,||[\N,AB]||, |\Delta(AB)|_\alpha,|[\N,\Delta(AB)]|_\alpha\}.
\]
First, $||AB||\le ||A||\cdot||B||$. By the identity $[\N,AB]=[\N,A]B+A[\N,B]$ we get 
\[
||[\N,AB]||\le ||[\N,A]||\cdot||B||+||A||\cdot||[\N,B]||.
\]
Then we estimate the last two terms on difference matrices. By \eqref{differenceProdoct} we obtain 
\[
|\Delta (AB)_i^j|\le|A_{i+1}^1|\cdot|B_1^{j+1}|+\sum_{k\ge1}|\Delta A_i^k|\cdot |B_{k+1}^{j+1}|+\sum_{k\ge1}|A_i^k|\cdot|\Delta B_k^j|:=\Delta_{(1)}+\Delta_{(2)}+\Delta_{(3)}.
\]
Clearly $|\Delta_{(1)}|\le \frac{||[\N,A]||\cdot||[\N,B]||}{ij}\le\frac{||[\N,A]||\cdot||[\N,B]||}{(i\wedge j)^\alpha(1+|j-i|)}$. Following the notation in Lemma \ref{explanationMplus} let $\overline{\Delta A}_i^k=|\Delta  A_i^k|(1+|i-k|)$ then we get 
\begin{align*}
\Delta_{(2)}&\le\sum_{k\ge1}\frac{\overline{\Delta A}_i^k}{1+|i-k|}\cdot\frac{\overline{B}_{k+1}^{j+1}}{1+|k-j|}\le\left(|\Delta A|_\alpha+|[\N,\Delta A]|_\alpha\right)\sum_{k\ge1}\frac{\overline{B}_{k+1}^{j+1}}{(i\wedge k)^\alpha(1+|i-k|)(1+|k-j|)}\\
&\le\left(|\Delta A|_\alpha+|[\N,\Delta A]|_\alpha\right)\left(\sum_{1+|i-k|\ge(1+|i-j|)/2}+\sum_{1+|k-j|\ge(1+|i-j|)/2}\right)\frac{\overline{B}_{k+1}^{j+1}}{(i\wedge k)^\alpha(1+|i-k|)(1+|k-j|)}\\
&\le\frac{2(|\Delta A|_\alpha+|[\N,\Delta A]|_\alpha)}{1+|i-j|}\sum_{k\ge1}\left(\frac{\overline{B}_{k+1}^{j+1}}{(i\wedge k)^\alpha(1+|k-j|)}+\frac{\overline{B}_{k+1}^{j+1}}{(i\wedge k)^\alpha(1+|i-k|)}\right).
\end{align*}
Analogous to the $\Delta_2$ and $\Delta_3$ cases, we derive
\[
\Delta_{(2)}\le \frac{C}{(i\wedge j)^\alpha(1+|i-j|)}\left(|\Delta A|_\alpha+|[\N,\Delta A]|_\alpha\right)\cdot\left(||B||+||[\N,B]||\right).
\]
In the same way, letting $\overline{\Delta B}_k^j=|\Delta B_k^j|(1+|k-j|)$, we have 
\begin{align*}
\Delta_{(3)}&\le \sum_{k\ge1}\frac{\overline{A}_i^k}{1+|i-k|}\cdot\frac{\overline{\Delta B}_k^j}{1+|k-j|}\le\left(|\Delta B|_\alpha+|[\N,\Delta B]|_\alpha\right)\sum_{k\ge1}\frac{\overline{A}_i^k}{(k\wedge j)^\alpha(1+|i-k|)(1+|k-j|)}.
\end{align*}
Repeating the estimation procedure used for $\Delta_{(2)}$, we obtain 
\[
\Delta_{(3)}\le\frac{C}{(i\wedge j)^\alpha(1+|i-j|)}\left(|\Delta B|_\alpha+|[\N,\Delta B]|_\alpha\right)\cdot\left(||A||+||[\N,A]||\right).
\]
All the above estimates on $\Delta_{(1)},\Delta_{(2)},\Delta_{(3)}$ imply that 
\[
|\Delta (AB)|_\alpha+|[\N,\Delta (AB)]|_\alpha\le C||A||_{\wa+}\cdot||B||_{\wa+},
\]
which completes the proof of item $(b)$ due to the previous estimates on operator norms.

$(c)$. Since $A\in\M_+$, by definition we have $||A||:=||A||_{\B(\ell^2_0)}\le ||A||_+$, which implies that item $(c)$ holds for $s=0$. Let $s\in(0,2]$. By Lemmas \ref{explanationMplus}, \ref{convergenceSeries}, we get for all $u\in\ell^2_s$
\begin{align*}
||Au||_s^2&=\sum_{i\ge1}i^s|\sum_{j\ge1}A_i^ju_j|^2\le\sum_{i\ge1}\left(\sum_{j\ge1}\frac{\overline{A}_i^j}{1+|i-j|}\left(\dfrac ij\right)^{\frac s2}j^{\frac s2}|u_j|\right)^2\\
&\le \sum_{i\ge1}\left(\sum_{j\ge1}\frac{(i/j)^s}{(1+|i-j|)^2}\right)\left(\sum_{j\ge1}(\overline{A}_i^j)^2j^s|u_j|^2\right)\qquad\text{\underline{by Hölder's inequality}}\\
&\le C\sum_{i\ge1}\sum_{j\ge1}(\overline{A}_i^j)^2j^s|u_j|^2=C\sum_{j\ge1}j^s|u_j|^2\sum_{i\ge1}(\overline{A}_i^j)^2\qquad\text{\underline{by Lemma \ref{convergenceSeries}}}\\
&\le C||A||_+^2\sum_{j\ge1}j^s|u_j|^2=C||A||_+^2||u||_s^2,\qquad\text{\underline{by Lemma \ref{explanationMplus}.}}
\end{align*}
Also, we have for all $u\in\ell^2_{-s}$
\begin{align*}
	||Au||_{-s}^2&=\sum_{i\ge1}i^{-s}|\sum_{j\ge1}A_i^ju_j|^2\le\sum_{i\ge1}\left(\sum_{j\ge1}\frac{\overline{A}_i^j}{1+|i-j|}\left(\dfrac ji\right)^{\frac s2}j^{-\frac s2}|u_j|\right)^2\\
	&\le \sum_{i\ge1}\left(\sum_{j\ge1}(\overline{A}_i^j)^2\right)\left(\sum_{j\ge1}\frac{(j/i)^s}{(1+|i-j|)^2}j^{-s}|u_j|^2\right)\qquad\text{\underline{by Hölder's inequality}}\\
	&\le C||A||_+^2\sum_{i\ge1}\sum_{j\ge1}\frac{(j/i)^s}{(1+|j-i|)^2}j^{-s}|u_j|^2 \qquad\text{\underline{by Lemma \ref{explanationMplus}}}\\
	&=C||A||_+^2\sum_{j\ge1}j^{-s}|u_j|^{2}\sum_{i\ge1}\frac{(j/i)^s}{(1+|j-i|)^2}\\
	&\le C||A||_+^2\sum_{j\ge1}j^{-s}|u_j|^2=C||A||_+^2||u||_{-s}^2,\qquad\text{\underline{by Lemma \ref{convergenceSeries}.}}
\end{align*}
By connecting the two estimates above, we conclude that $||A||_{\B(\ell^2_s)}\le C||A||_+$ for any $s\in[-2,2]$.  Hence, item (c) is proved.

$(d)$. Since $\Delta A\in\M_{\alpha}$, then by definition one gets $|d_{i+1}-d_i|=|\Delta A_i^i|\le\frac{|\Delta A|_{\alpha}}{i^\alpha}$. Without loss of generality, assuming that $i\le j$, we obtain
\[
|d_i-d_j|=\left|\sum_{l=i}^{j-1}d_{l+1}-d_l\right|\le\sum_{l=i}^{j-1}|d_{l+1}-d_l|\le\sum_{l=i}^{j-1}\frac{|\Delta A|_\alpha}{l^\alpha}\le\frac{|\Delta A|_{\alpha}|i-j|}{(i\wedge j)^\alpha}.
\]
The whole proof is now complete.

\end{proof}

\section{Some auxiliary lemmas}

\begin{lemma}\label{explanationMplus}
Given a matrix $A\in\M_+$, define $\overline{A}$ by 
\[
\overline{A}_i^j:=|A_i^j|\cdot(1+|i-j|),\quad i,j\in\bN.
\]
Then we have 
\begin{gather*}
\sqrt{\sum_{i\ge1}|A_i^j|^2},\sqrt{\sum_{j\ge1}|A_i^j|^2}\le ||A||,\\
\sqrt{\sum_{i\ge1}(\overline{A}_i^j)^2},\sqrt{\sum_{j\ge1}(\overline{A}_i^j)^2}\le ||A||+||[\N,A]||.
\end{gather*}
\end{lemma}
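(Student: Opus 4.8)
The plan is to reduce everything to the elementary fact that, for any matrix $B\in\M$, the Euclidean norm of each of its rows and of each of its columns is bounded by the operator norm $\|B\|$, and then to recognize the weight $1+|i-j|$ as coming precisely from the commutator with $\N$.

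First I would establish the first pair of inequalities. Writing $e_j$ for the $j$-th vector of the canonical basis of $\ell^2$, the $j$-th column of $A$ is exactly $Ae_j$, so $\sqrt{\sum_{i\ge1}|A_i^j|^2}=\|Ae_j\|_{\ell^2}\le\|A\|\,\|e_j\|_{\ell^2}=\|A\|$. For the rows I would pass to the transpose (adjoint) $A^*$, whose columns are the rows of $A$, and use $\|A^*\|=\|A\|$ to get $\sqrt{\sum_{j\ge1}|A_i^j|^2}=\|A^*e_i\|_{\ell^2}\le\|A\|$.

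For the second pair, the key observation is that, since $\N=diag\{1,2,3,\dots\}$, one has $[\N,A]_i^j=(i-j)A_i^j$ for all $i,j\in\bN$, hence
\[
\overline{A}_i^j=|A_i^j|\,(1+|i-j|)=|A_i^j|+|i-j|\,|A_i^j|=|A_i^j|+\big|[\N,A]_i^j\big|.
\]
Because $A\in\M_+$ by hypothesis, the matrix $[\N,A]$ is itself a bounded operator, so the first pair of inequalities may legitimately be applied to both $A$ and $[\N,A]$. Applying the triangle inequality for the $\ell^2$ norm (in the index $i$, resp.\ in the index $j$) to the pointwise bound above, I would conclude
\[
\sqrt{\sum_{i\ge1}(\overline{A}_i^j)^2}\le\sqrt{\sum_{i\ge1}|A_i^j|^2}+\sqrt{\sum_{i\ge1}\big|[\N,A]_i^j\big|^2}\le\|A\|+\|[\N,A]\|,
\]
and likewise for the sum over $j$.

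There is essentially no serious obstacle: the only point that requires a moment's attention is the identification $[\N,A]_i^j=(i-j)A_i^j$ together with the remark that membership of $A$ in $\M_+$ is exactly what guarantees $[\N,A]\in\M$, so that the row/column estimate of the first part applies to it as well. Everything else is the triangle inequality in $\ell^2$.
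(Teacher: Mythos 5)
Your proof is correct and follows essentially the same route as the paper: identify rows/columns of $A$ with $A^*e_i$ and $Ae_j$, bound them by the operator norm, then split $\overline{A}_i^j=|A_i^j|+|(i-j)A_i^j|$ (i.e.\ $|A_i^j|+|[\N,A]_i^j|$) and apply Minkowski's inequality in $\ell^2$. The only cosmetic difference is that you pass to the adjoint $A^*$ whereas the paper writes the transpose $A'$, which makes no difference for the operator norm on $\ell^2$.
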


\begin{proof}
Let $\{\boldsymbol e_j\}_{j\ge1}$ be an orthonormal basis of $\ell^2$. Then one has $A\boldsymbol{e}_j=(A_i^j)_{i\ge1}$ and therefore 
\[
\sqrt{\sum_{i\ge1}|A_i^j|^2}=||A\boldsymbol{e}_j||\le ||A||.
\]
On the other hand, since $\ell^2$ is  a Hilbert space then we have $||A'||=||A||$, which implies 
\[
\sqrt{\sum_{j\ge1}|A_i^j|^2}=||A'\boldsymbol{e}_i||\le ||A'||=||A||.
\]
By Minkowski's inequality, we obtain
\[
\sqrt{\sum_{i\ge1}(\overline{A}_i^j)^2}=\sqrt{\sum_{i\ge1}(|A_i^j|+|(i-j)A_i^j|)^2}\le \sqrt{\sum_{i\ge1}|A_i^j|^2}+\sqrt{\sum_{i\ge1}|(i-j)A_i^j|^2}\le ||A||+||[\N,A]||.
\]
The remaining estimate is proved similarly.
\end{proof}

\begin{lemma}\label{explanationM}
	Given a matrix $A\in\M$, define $\underline{A}$ by
	\[
	\underline{A}_i^j:=\frac{|A_i^j|}{1+|i-j|}.
	\]
	Then one has $\underline{A}\in\M$ and $||\underline{A}||\le C||A||$.
\end{lemma}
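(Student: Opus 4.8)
The plan is to deduce the estimate from the Schur test, after noticing that $\underline{A}$ has nonnegative entries. Writing $\underline{A}_i^j=|A_i^j|/(1+|i-j|)\ge 0$, and using that the operator norm of a matrix with nonnegative entries is unchanged when the input vector is replaced by its modulus, it is enough to control the two quantities $\sup_{i\ge1}\sum_{j\ge1}\underline{A}_i^j$ and $\sup_{j\ge1}\sum_{i\ge1}\underline{A}_i^j$, and then to conclude
$||\underline{A}||\le\big(\sup_i\sum_j\underline{A}_i^j\big)^{1/2}\big(\sup_j\sum_i\underline{A}_i^j\big)^{1/2}$
by the classical Schur test (a one-line Cauchy--Schwarz/AM--GM argument, which in particular produces the bounded operator $\underline{A}$ on all of $\ell^2$).

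First I would estimate the row sums. By Cauchy--Schwarz,
\[
\sum_{j\ge1}\frac{|A_i^j|}{1+|i-j|}\le\Big(\sum_{j\ge1}|A_i^j|^2\Big)^{1/2}\Big(\sum_{j\ge1}\frac{1}{(1+|i-j|)^2}\Big)^{1/2}\le||A||\cdot\Big(\sum_{d\in\bZ}\frac{1}{(1+|d|)^2}\Big)^{1/2},
\]
where I use, on the one hand, that every row of $A$ lies in $\ell^2$ with $\sum_{j\ge1}|A_i^j|^2=||A^*\boldsymbol{e}_i||^2\le||A||^2$ (this is precisely the first inequality recorded in Lemma \ref{explanationMplus}, and uses only $A\in\M$), and on the other hand that $\sum_{d\in\bZ}(1+|d|)^{-2}$ is a finite absolute constant. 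Swapping the roles of $i$ and $j$ and using $\sum_{i\ge1}|A_i^j|^2=||A\boldsymbol{e}_j||^2\le||A||^2$, the same computation bounds the column sums by the same constant times $||A||$.

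Feeding these two bounds into the Schur test yields $\underline{A}\in\M$ together with $||\underline{A}||\le C||A||$, where $C^2=\sum_{d\in\bZ}(1+|d|)^{-2}=\pi^2/3-1$ (any explicit finite value suffices for our purposes). There is essentially no obstacle here: the only points to check are the elementary fact that each row and column of a bounded operator on $\ell^2$ belongs to $\ell^2$ with norm at most $||A||$ --- already available through Lemma \ref{explanationMplus} --- and the summability of $\sum_d(1+|d|)^{-2}$, which supplies exactly the extra decay that the weight $1/(1+|i-j|)$ lacked to be handled termwise.
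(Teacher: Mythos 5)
Your proof is correct, and it takes a genuinely different route from the paper. The paper bounds $||\underline{A}u||^2$ directly: it applies Cauchy--Schwarz once, pairing the weight $1/(1+|i-j|)$ with the factor $|A_i^j|\,|u_j|$ so as to produce $\big(\sum_j(1+|i-j|)^{-2}\big)\big(\sum_j|A_i^j|^2|u_j|^2\big)$, and then exchanges the order of summation and invokes only the column $\ell^2$ bound $\sum_i|A_i^j|^2\le||A||^2$. You instead factor the argument through the Schur test, which requires you first to bound both the row sums and the column sums of $\underline{A}$; for this you need both the row and the column $\ell^2$ bounds from Lemma~\ref{explanationMplus}, and you correctly justify restricting to nonnegative input vectors (which the Schur test needs and the paper's direct estimate does not). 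Both proofs ultimately rest on the same two ingredients --- the $\ell^2$ bound on rows/columns of a bounded operator, and the summability of $\sum_d(1+|d|)^{-2}$ --- but the paper's single Cauchy--Schwarz split is slightly leaner, while your approach has the advantage of appealing to a standard, reusable tool and of making the explicit constant transparent (your value $C^2=\pi^2/3-1$ is correct). Either argument is perfectly adequate here.
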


\begin{proof}
	For any $u\in\ell^2$, we have
	\begin{align*}
		||\underline{A}u||^2&=\sum_{i\ge1}|\sum_{j\ge1}\underline{A}_i^ju_j|^2=\sum_{i\ge1}\left(\sum_{j\ge1}\frac{|A_i^j|}{1+|i-j|}|u_j|\right)^2\\
		&\le\sum_{i\ge1}\left(\sum_{j\ge1}\frac{1}{(1+|i-j|)^2}\right)\left(\sum_{j\ge1}|A_i^j|^2|u_j|^2\right)\qquad\underline{\text{by  Hölder's inequality}}\\
		&\le C\sum_{i=1}\sum_{j\ge1}|A_i^j|^2|u_j|^2=C\sum_{j\ge1}|u_j|^2\sum_{i\ge1}|A_i^j|^2\le C||A||^2||u||^2,
	\end{align*}
and thus  $||\underline{A}||\le C||A||$.
\end{proof}

\begin{lemma}\label{convergenceSeries}
Given $s\in[0,2]$, there is a constant $C>0$ such that for all $i,j\in\bN$
\[
\sum_{j\ge1}\frac{(i/j)^s}{(1+|j-i|)^2}\le C,\quad \sum_{i\ge1}\frac{(j/i)^s}{(1+|j-i|)^2}\le C.
\]
\end{lemma}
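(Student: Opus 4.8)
The plan is to reduce the two inequalities to one: the second series, $\sum_{i\ge1}(j/i)^s(1+|j-i|)^{-2}$, is obtained from the first simply by renaming the summation index (it equals $\sum_{k\ge1}(j/k)^s(1+|k-j|)^{-2}$, i.e.\ the first series with $j$ now playing the role of $i$), so it suffices to bound $\Sigma(i):=\sum_{j\ge1}\frac{(i/j)^s}{(1+|j-i|)^2}$ by a constant independent of $i\in\bN$.

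Then I would split the sum over $j$ according to the size of $j$ relative to $i$, into the low range $j\le i/2$, the bulk $i/2<j<2i$, and the high range $j\ge 2i$. In the bulk, $i/j<2$ gives $(i/j)^s\le 2^s\le 4$ (using $s\le 2$), so that portion is at most $4\sum_{m\in\bZ}(1+|m|)^{-2}<\infty$. In the low range, $i-j\ge i/2$ yields $(1+|j-i|)^2\ge i^2/4$, and $i/j\ge 1$ with $s\le 2$ yields $(i/j)^s\le (i/j)^2$, so that portion is at most $\frac{4}{i^2}\sum_{j\le i/2}(i/j)^2=4\sum_{j\ge1}j^{-2}$. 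In the high range, $j-i\ge j/2$ yields $(1+|j-i|)^2\ge j^2/4$, and $i/j\le 1$ with $s\ge 0$ yields $(i/j)^s\le 1$, so that portion is at most $4\sum_{j\ge1}j^{-2}$. Adding the three bounds gives $\Sigma(i)\le C$ with $C$ absolute.

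I do not expect any genuine obstacle here; the computation is entirely elementary, resting only on the convergence of $\sum_{j\ge1}j^{-2}$ and of $\sum_{m\in\bZ}(1+|m|)^{-2}$. The one point worth flagging is that the hypothesis $0\le s\le 2$ is used sharply at both ends of the range of $j$: the bound $s\le 2$ is what makes the contribution of $j<i$ comparable to $\sum j^{-2}$, while $s\ge 0$ is what keeps the factor $(i/j)^s$ bounded on the tail $j>i$.
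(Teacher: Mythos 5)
Your proof is correct and uses essentially the same approach as the paper (splitting according to the size of $j$ relative to $i$, using $s\le 2$ to handle small $j$ and boundedness of $(i/j)^s$ near and above $i$). The paper uses only a two-way split at $j=i/2$, folding your ``bulk'' and ``high'' ranges into one since $(i/j)^s\le 2^s$ whenever $j\ge i/2$, but the underlying estimates are identical.
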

\begin{proof}
We directly calculate that 
\begin{align*}
&\sum_{j\ge1}\frac{(i/j)^s}{(1+|j-i|)^2}\le\left(\sum_{j\ge i/2}+\sum_{j\le i/2}\right)\frac{(i/j)^s}{(1+|j-i|)^2}\\
\le&\sum_{j\ge i/2}\frac{C}{(1+|j-i|^2)}+\sum_{j\le i/2}\frac{C}{i^{2-s}j^s}
\le C+\sum_{j\le i/2}\frac{C}{j^2}\le C.
\end{align*}
We have proved the first series converges uniformly in $i\in\bN$. Switching the indexes $i,j$ leads to the second estimate.
\end{proof}

\begin{lemma}[see \cite{koch05}]\label{kochLp}
	Let $\lambda^2$ be an eigenvalue and $h$ be the corresponding normalized eigenfunction of the Hermite operator $\bH=-\frac{d^2}{dx}+x^2$. Then we have for any $\delta\in[0,1]$
	\[
	||(1+|x|)^\delta h(x)||_{L^2(\bR)}\lesssim\lambda^{\delta}.
	\]
\end{lemma}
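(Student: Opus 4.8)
The plan is to prove the bound at the endpoints $\delta=0$ and $\delta=1$ and then fill in the intermediate range $\delta\in(0,1)$ by interpolation. The endpoint $\delta=0$ is just the normalization $||h||_{L^2}=1$, which is trivially $\le\lambda^0$; and since the eigenvalues of $\bH$ are $\ge1$, we will also freely use $\lambda\ge1$ below. So the only substantive endpoint is $\delta=1$.

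For $\delta=1$ I would argue spectrally. The function $h$ is an $L^2$-normalized eigenfunction, $\bH h=\lambda^2h$, and $h$ is a Schwartz function (Hermite functions decay faster than any polynomial), so pairing the eigenvalue equation with $h$ and integrating by parts is legitimate and gives
\[
\lambda^2=\langle\bH h,h\rangle=\langle-h'',h\rangle+\langle x^2h,h\rangle=||h'||_{L^2}^2+||xh||_{L^2}^2.
\]
In particular $||xh||_{L^2}\le\lambda$, and hence by the triangle inequality
\[
||(1+|x|)h||_{L^2}\le||h||_{L^2}+||xh||_{L^2}\le1+\lambda\le2\lambda,
\]
which proves the case $\delta=1$.

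For $\delta\in(0,1)$ I would interpolate by Hölder's inequality: writing
\[
(1+|x|)^{2\delta}|h(x)|^2=\big((1+|x|)^2|h(x)|^2\big)^{\delta}\big(|h(x)|^2\big)^{1-\delta}
\]
and applying Hölder with conjugate exponents $1/\delta$ and $1/(1-\delta)$, we get
\[
||(1+|x|)^\delta h||_{L^2}^2\le\Big(\int_{\bR}(1+|x|)^2|h|^2\,\md x\Big)^{\delta}\Big(\int_{\bR}|h|^2\,\md x\Big)^{1-\delta}=||(1+|x|)h||_{L^2}^{2\delta}\,||h||_{L^2}^{2(1-\delta)}.
\]
The two endpoint estimates bound the right-hand side by $(2\lambda)^{2\delta}\cdot1$, whence $||(1+|x|)^\delta h||_{L^2}\lesssim\lambda^\delta$, as claimed.

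The computation is essentially routine, so there is no real obstacle; the only point worth a word of care is the identity $||h'||_{L^2}^2+||xh||_{L^2}^2=\lambda^2$ used at $\delta=1$, which one justifies via the rapid decay of the Hermite functions (alternatively, one may simply quote estimate (11) of \cite{koch05} directly).
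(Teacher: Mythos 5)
Your proof is correct, but it follows a genuinely different (and in fact more elementary) route than the paper's. The paper splits $\bR$ into $D_\lambda=\{|x|\le2\lambda\}$ and its complement: on $D_\lambda$ it bounds the weight trivially by $\|(1+|x|)^\delta\|_{L^\infty(D_\lambda)}\lesssim\lambda^\delta$, and on $D_\lambda^c$ it invokes the Koch--Tataru Airy-type bound $\|(x^2-\lambda^2)h\|_{L^2(D_\lambda^c)}\lesssim\lambda^{1/3}$ (estimate (11) of \cite{koch05}) to get the much stronger decay $\lambda^{-2/3}$ there. You instead prove the $\delta=1$ endpoint directly from the elementary spectral identity $\lambda^2=\langle\bH h,h\rangle=\|h'\|^2+\|xh\|^2$, which gives $\|xh\|\le\lambda$, and interpolate to $\delta\in(0,1)$ via Hölder. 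Both arguments are sound; yours has the advantage of being self-contained and avoiding the nontrivial Koch--Tataru eigenfunction bound entirely, whereas the paper's region-splitting argument is closer in spirit to the tools it already imports from \cite{koch05}. One small inaccuracy in your closing remark: the identity $\|h'\|^2+\|xh\|^2=\lambda^2$ is not a consequence of estimate (11) of \cite{koch05}; that estimate is the Airy-regime bound used on $D_\lambda^c$, not a source of the Parseval-type identity. The identity is correctly justified, as you say, by integration by parts using the Schwartz decay of Hermite functions, so this does not affect your proof.
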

\begin{proof}
	Reasoning as in \cite[Proof of Theorem 3]{koch05} we set $D_\lambda=\{x\in\bR:|x|\le 2\lambda\}$ and denote its complement by $D_\lambda^c$. By Minkowski's inequality, we have 
	\[
	||(1+|x|)^\delta h(x)||_{L^2(\bR)}\le ||(1+|x|)^\delta h(x)||_{L^2(D_\lambda)}+||(1+|x|)^\delta h(x)||_{L^2(D_\lambda^c)}:=L_1+L_2.
	\] 
	Clearly, by Hölder's inequality we get $L_1\le||(1+|x|)^\delta||_{L^\infty(D_\lambda)}||h(x)||_{L^2(D_\lambda)}\lesssim \lambda^\delta$. Then by estimate (11) in \cite{koch05} one obtains \[
	||(x^2-\lambda^2)h(x)||_{L^2(D_\lambda^c)}\lesssim \lambda^{1/3},
	\]
and thus $L_2\lesssim\lambda^{-2/3}\lesssim \lambda^\delta$. This completes the proof.
\end{proof}

Following almost the same idea, we directly get the following result.

\begin{lemma}\label{logdelta}
	Let $\lambda^2$ be an eigenvalue and $h$ be the corresponding normalized eigenfunction of the Hermite operator $\bH=-\frac{d^2}{dx}+x^2$. Then we have for any $\delta'\ge0$
	\[
	||(1+|x|)\ln^{-\delta'}(2+|x|)h(x)||_{L^2(\bR)}\lesssim\frac{\lambda}{\ln^{\delta'}(2+\lambda)}.
	\]
\end{lemma}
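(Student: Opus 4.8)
The plan is to mirror the proof of Lemma \ref{kochLp} almost verbatim; the only genuinely new point is the treatment of the logarithmic weight on the bounded region $\{|x|\le 2\lambda\}$. Writing $w(x):=(1+|x|)\ln^{-\delta'}(2+|x|)$, I would set $D_\lambda=\{x\in\bR:|x|\le 2\lambda\}$, denote its complement by $D_\lambda^c$, and use Minkowski's inequality to split
\[
||w\,h||_{L^2(\bR)}\le||w\,h||_{L^2(D_\lambda)}+||w\,h||_{L^2(D_\lambda^c)}=:L_1+L_2 .
\]
Throughout I allow the implied constants to depend on $\delta'$ (which is a fixed quantity $\delta'(n)$ in the intended application, cf.\ Remark \ref{logDecay}), and I use that the eigenvalues of $\bH$ satisfy $\lambda\ge1$.

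For $L_1$, since $h$ is normalized I would bound $L_1\le||w||_{L^\infty(D_\lambda)}\,||h||_{L^2(\bR)}=||w||_{L^\infty(D_\lambda)}$, so everything reduces to the pointwise inequality
\[
\sup_{0\le t\le 2\lambda}(1+t)\ln^{-\delta'}(2+t)\lesssim\frac{\lambda}{\ln^{\delta'}(2+\lambda)} .
\]
The function $\phi(t):=(1+t)\ln^{-\delta'}(2+t)$ satisfies $\phi'(t)>0$ as soon as $\ln(2+t)>\delta'$ (a one-line computation, using $\tfrac{1+t}{2+t}<1$), hence $\phi$ is nondecreasing for $t\ge \me^{\delta'}$ and, being continuous, is bounded by a constant $M(\delta')$ on the complementary bounded interval; therefore $\sup_{[0,2\lambda]}\phi\le\max\{M(\delta'),\phi(2\lambda)\}$. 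Since $\phi(2\lambda)=(1+2\lambda)\ln^{-\delta'}(2+2\lambda)\le 3\lambda\ln^{-\delta'}(2+\lambda)$, and since $M(\delta')\lesssim\lambda\ln^{-\delta'}(2+\lambda)$ because $t\mapsto t\ln^{-\delta'}(2+t)$ is bounded below by a positive constant on $[1,\infty)$, the claim follows and hence $L_1\lesssim\lambda\ln^{-\delta'}(2+\lambda)$.

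For $L_2$ I would argue exactly as in the proof of Lemma \ref{kochLp} (reasoning as in \cite[Proof of Theorem 3]{koch05}). On $D_\lambda^c$ one has $|x|>2\lambda$, so $x^2-\lambda^2\ge\tfrac34x^2$ and, since $x^2>4\lambda^2$, also $x^2\le\lambda^{-2}(x^2-\lambda^2)^2$; together with $1+|x|\le 2|x|$ and the monotonicity bound $\ln^{-2\delta'}(2+|x|)\le\ln^{-2\delta'}(2+\lambda)$, this gives $w(x)^2\lesssim\lambda^{-2}\ln^{-2\delta'}(2+\lambda)\,(x^2-\lambda^2)^2$ on $D_\lambda^c$. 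Hence
\[
L_2^2\lesssim\lambda^{-2}\ln^{-2\delta'}(2+\lambda)\,||(x^2-\lambda^2)h||_{L^2(D_\lambda^c)}^2\lesssim\lambda^{-2}\ln^{-2\delta'}(2+\lambda)\,\lambda^{2/3},
\]
where I used estimate (11) of \cite{koch05}, i.e.\ $||(x^2-\lambda^2)h||_{L^2(D_\lambda^c)}\lesssim\lambda^{1/3}$. Thus $L_2\lesssim\lambda^{-2/3}\ln^{-\delta'}(2+\lambda)\le\lambda\ln^{-\delta'}(2+\lambda)$, and adding $L_1$ and $L_2$ finishes the proof.

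The only step needing a bit of care is the $L_1$ estimate: in Lemma \ref{kochLp} the crude bound $(1+|x|)^\delta\le(1+2\lambda)^\delta$ on $D_\lambda$ already yields the wanted power $\lambda^\delta$, whereas here $||w||_{L^\infty(D_\lambda)}\le(\ln 2)^{-\delta'}(1+2\lambda)$ would throw away the logarithmic gain, so one genuinely needs the eventual monotonicity of $\phi$ to recover the factor $\ln^{-\delta'}(2+\lambda)$. The $L_2$ part is a routine adaptation of the argument from \cite{koch05} already carried out for Lemma \ref{kochLp}, so I do not expect any obstacle there.
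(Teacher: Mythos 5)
Your proof is correct and is exactly the adaptation the paper has in mind when it says ``Following almost the same idea'' before stating Lemma \ref{logdelta}: the same split over $D_\lambda$ and $D_\lambda^c$, the same use of estimate (11) from \cite{koch05} on $D_\lambda^c$, and you correctly identify and handle the one genuinely new point, namely that the crude $L^\infty$ bound on $D_\lambda$ no longer suffices and one must exploit the eventual monotonicity of $t\mapsto(1+t)\ln^{-\delta'}(2+t)$ to retain the logarithmic factor.
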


\begin{lemma}\label{measure}
Let $f:[0,1]\mapsto\bR$ be a $C^1$ function whose derivative satisfies $|f'(x)|\ge\varsigma>0$ for all $x\in[0,1]$. Then, for each $\kappa>0$, one has $\meas\big(\{x\in[0,1]:|f(x)|<\kappa\}\big)\le\frac{2\kappa}{\varsigma}$.
\end{lemma}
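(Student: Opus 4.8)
The plan is to show that the sublevel set $E:=\{x\in[0,1]:|f(x)|<\kappa\}$ has diameter at most $2\kappa/\varsigma$, from which the measure bound follows at once, since any subset of $\bR$ of diameter $d$ is contained in an interval of length $d$ and Lebesgue measure is monotone.

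First I would pick two arbitrary points $x,y\in E$, say with $x\le y$. Since $f$ is $C^1$ on $[0,1]$, the mean value theorem provides a point $\xi\in[x,y]$ with $f(y)-f(x)=f'(\xi)(y-x)$. Taking absolute values and using the hypothesis $|f'(\xi)|\ge\varsigma$ gives $\varsigma\,|y-x|\le |f'(\xi)|\,|y-x|=|f(y)-f(x)|$. On the other hand, both $f(x)$ and $f(y)$ lie in the open interval $(-\kappa,\kappa)$, so $|f(y)-f(x)|<2\kappa$. Combining the two inequalities yields $|y-x|<2\kappa/\varsigma$.

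Since $x,y\in E$ were arbitrary, this shows $\operatorname{diam}(E)\le 2\kappa/\varsigma$; hence $E$ is contained in some bounded interval $I\subset\bR$ with $|I|\le 2\kappa/\varsigma$, and therefore $\meas(E)\le\meas(I)\le 2\kappa/\varsigma$. (If $E=\emptyset$ the claim is trivially true.)

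There is essentially no obstacle here; the only point worth recording is that one does \emph{not} need $f$ to be monotone — the pointwise bound $|f'|\ge\varsigma$ plugged into the mean value theorem suffices. As an alternative route one could observe that $f'$, being continuous and nowhere vanishing on $[0,1]$, has constant sign by the intermediate value theorem, so $f$ is strictly monotone and $E=f^{-1}\big((-\kappa,\kappa)\big)$ is an interval on which $f$ varies by strictly less than $2\kappa$ while changing at rate at least $\varsigma$; this gives the same bound on $|E|$.
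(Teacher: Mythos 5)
Your proof is correct and complete: the mean value theorem argument combined with the observation that the sublevel set $E$ has diameter at most $2\kappa/\varsigma$ gives exactly the stated measure bound. The paper states this lemma without proof (it is a standard auxiliary fact), so there is no paper argument to compare against; your derivation is the usual one, and the remark that monotonicity of $f$ is not actually required is a nice clarification.
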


\begin{lemma}\label{Melnikov}
Let $\lambda_i=2i-1$ for $i\in\bN$. Then there are positive constants $C,\mu_1,\mu_2$ such that for all $\gamma\in(0,1/4)$ and $K\ge1$ there exists a closed subset $\mathbb D=\mathbb D(\gamma,K)\subset\Pi$, satisfying $\meas(\Pi\backslash\mathbb D)\le C\gamma^{\mu_1}K^{\mu_2}$, such that the following holds for all $\omega\in\mathbb D$:
\[
|k\cdot\omega+\lambda_i-\lambda_j|\ge\gamma(1+|i-j|),\quad\forall\,i,j\in\bN \text{ and }\forall\,k\in\bZ^n\setminus\{0\},~|k|\le K.
\]
\end{lemma}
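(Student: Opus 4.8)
The plan is to realize $\mathbb D$ as the complement in $\Pi$ of a finite union of open ``resonant slabs'' and to estimate the total measure of these slabs. Since $\lambda_i=2i-1$, we have $\lambda_i-\lambda_j=2(i-j)$, so the quantity $k\cdot\omega+\lambda_i-\lambda_j$ depends on the pair $(i,j)$ only through the integer $l:=i-j$; accordingly we reindex everything by $l\in\bZ$ and, for $0\ne k\in\bZ^n$ with $|k|\le K$ and $l\in\bZ$, introduce
\[
G^k_l:=\{\omega\in\Pi:\ |k\cdot\omega+2l|<\gamma(1+|l|)\}.
\]

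The key preliminary observation is that $G^k_l$ is empty unless $|l|$ is comparable to $|k|$. Indeed, for $\omega\in\Pi=[0,2\pi)^n$ one has $|k\cdot\omega|\le 2\pi|k|$, hence $|k\cdot\omega+2l|\ge 2|l|-2\pi|k|$; since $\gamma<1/4$, this lower bound is already $\ge\gamma(1+|l|)$ as soon as $|l|\ge C_0(|k|+1)$ for a suitable universal constant $C_0$. Therefore it suffices to discard only the sets $G^k_l$ with $0\ne|k|\le K$ and $|l|\le C_0(|k|+1)$, and to set $\mathbb D:=\Pi\setminus\bigcup_{k,l}G^k_l$ (which is relatively closed, and may be replaced by its closure up to a null set). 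Because we use strict inequalities in the definition of $G^k_l$, every $\omega\in\mathbb D$ satisfies the desired estimate with ``$\ge$''.

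To bound $\meas(G^k_l)$, fix an index $i_0$ with $|k_{i_0}|=\max_{1\le i\le n}|k_i|\ge |k|/n$. Freezing the remaining coordinates, the function $\omega_{i_0}\mapsto k\cdot\omega+2l$ has derivative of modulus $|k_{i_0}|\ge|k|/n$ on $[0,2\pi]$, so by Lemma \ref{measure} (applied on $[0,2\pi]$ after rescaling) together with Fubini's theorem,
\[
\meas(G^k_l)\le (2\pi)^{n-1}\cdot\frac{2n\,\gamma(1+|l|)}{|k|}=\frac{C(n)\,\gamma(1+|l|)}{|k|}.
\]
Summing over $|l|\le C_0(|k|+1)$ contributes $\le C(n)\,\gamma\,|k|$, and summing over the at most $C(n)K^n$ multi-indices $k$ with $0\ne|k|\le K$ (each satisfying $|k|\le K$) gives
\[
\meas(\Pi\setminus\mathbb D)\le\sum_{0\ne|k|\le K}\ \sum_{|l|\le C_0(|k|+1)}\meas(G^k_l)\le C(n)\,\gamma\,K^{n+1},
\]
so the lemma holds with $\mu_1=1$ and $\mu_2=n+1$.

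The only step requiring genuine care is the emptiness observation of the second paragraph: although there are infinitely many pairs $(i,j)$, the resonant condition is violated for free whenever $|i-j|$ is large relative to $|k|\le K$, precisely because $\omega$ ranges over a bounded set. Without this remark the naive sum of slab measures diverges. The derivative lower bound in the direction of the largest component of $k$ is the standard device for a single slab and poses no difficulty.
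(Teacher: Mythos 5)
Your proposal is correct and follows essentially the same route as the paper: reindex by $l=i-j$ (the paper equivalently uses $l=\lambda_i-\lambda_j\in 2\bZ$), observe that because $\omega$ ranges over the bounded box $[0,2\pi)^n$ the slab is empty once $|l|$ exceeds a constant multiple of $|k|$ (the paper uses the cutoff $|l|\ge 4\pi|k|$ with $\gamma<1/4$), bound a single slab via the one-dimensional derivative estimate in the direction of the largest component of $k$ and Fubini, and sum to get $C(n)\gamma K^{n+1}$, hence $\mu_1=1$, $\mu_2=n+1$. The paper's displayed definition of $\mathbb D$ as a union of the sets $E^k_l$ is a typo for an intersection, and your set-theoretic formulation (complement of the union of resonant slabs) is the correct one.
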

\begin{proof}
Define for $l\in\bZ$ and $k\in\bZ^n\setminus\{0\}$ the set 
\[
E_l^k(\gamma):=\{\omega\in\Pi:|k\cdot\omega+l|\ge\gamma(1+|l|)\}.
\]
By Lemma \ref{measure} above, there is a constant $C=C(n)>0$ such that 
\[
\meas\left(\Pi\backslash E_l^k(\gamma)\right)\le C(n)\frac{\gamma(1+|l|)}{|k|}.
\]
Notice that for all $k\in\bZ^n\setminus\{0\}$ and $|l|\ge 4\pi|k|$ one has $E^k_l(\gamma)=\Pi$ since $\gamma\in(0,1/4)$.  Given that $\lambda_i-\lambda_j\in\bZ$, we define $\mathbb D:=\bigcup_{\substack{l\in\bZ\\ k\in\bZ^n,0<|k|\le K}}E^k_l(\gamma)$. Lemma \ref{measure} shows that 
\[
\meas(\Pi\backslash\mathbb D)\le\sum_{\substack{l\in\bZ,|l|\le 4\pi|k|\\ k\in\bZ^n,0<|k|\le K}}\meas\left(\Pi\backslash E^k_l(\gamma)\right)\le\sum_{k\in\bZ^n,0<|k|\le K}C(n)\gamma|k|\le C(n)\gamma K^{n+1}.
\]
Setting $\mu_1=1$ and $\mu_2=n+1$, we have the thesis.
\end{proof}

\subsubsection*{\bf Acknowledgments.}
Both authors were supported by PRIN 2022 ``Turbulent effects vs Stability in Equations from Oceanography'' (acronym: TESEO), project number: 2022HSSYPN.
Besides, Z. Wang was partially supported by China Postdoctoral Science Foundation (Grant no.\,2024M751512) and Nankai Zhide Foundation. 
Furthermore, E. Haus would like to thank INdAM-GNAMPA, and
Z. Wang would like to thank Zhenguo Liang for helpful discussions on technical details, as well as for suggestions regarding background and relevant literature.

\subsubsection*{\bf Declaration of competing interests} 
The authors have no competing interests to declare related to this work.

%\newpage
%\bibliographystyle{abbrv}
%vancouver
%\bibliography{scholar25}

\end{document}